\documentclass[12pt,a4paper]{amsart}

\usepackage{tikz-cd} % for diagrams
\tikzcdset{row sep/normal=3.6em, column
	sep/normal=3.6em}
\usetikzlibrary{decorations.pathmorphing}
\usepackage{quiver}

\usepackage{amssymb}
\usepackage{mathrsfs} % for mathscr (names of categories) (only capital letters)
\usepackage{enumitem} % for enumerate styles
\usepackage{moreenum}
\setlist{wide}
\usepackage{caption}

\usepackage{stmaryrd}

%%%%%%%%%%%%%%%%%%
%%% Environments
%%%%%%%%%%%%%%%%%%
\newtheorem{theorem}{Theorem}[section]
\newtheorem{proposition}[theorem]{Proposition}
\newtheorem{lemma}[theorem]{Lemma}

\theoremstyle{definition}
\newtheorem{assumption}[theorem]{Assumption}

\newtheorem{corollary}[theorem]{Corollary}
\newtheorem{definition}[theorem]{Definition}
\newtheorem{example}[theorem]{Example}
\newtheorem{examples}[theorem]{Examples}

\newtheorem{notation}[theorem]{Notation}
\newtheorem{openproblem}[theorem]{Open Problem}
\newtheorem{observation}[theorem]{Observation}
\newtheorem{remark}[theorem]{Remark}

%%%%%%%%%%%%%%%%%%

\usepackage{newmacros}

\usepackage[colorlinks,citecolor=blue]{hyperref}
%\usepackage[colorlinks,pageanchor=true,
%pagebackref,
%citecolor=blue]{hyperref}
%\hypersetup{pdfauthor=Matej Dostal,pdftitle=Thesis,
%	pdfstartview=FitH}

%\usepackage{showlabels}
\begin{document}

\title[Sifted Colimits]{Sifted Colimits, Strongly Finitary Monads and Continuous Algebras}
\author{J.~Adámek}
\thanks{J.~Adámek and M.~Dostál acknowledge the support by
	the Grant Agency of the Czech Republic under the grant 22-02964S}
\address{Department of Mathematics, Faculty of Electrical Engineering, Czech Technical University in Prague, Czech Republic and Institute for Theoretical Computer Science, Technical University Braunschweig, Germany}
\email{j.adamek@tu-bs.de}
\author{M.~Dostál}
\author{J.~Velebil}
\address{Department of Mathematics, Faculty of Electrical Engineering, Czech Technical University
	in Prague, Czech Republic}
\email{$\{$dostamat,velebil$\}$@fel.cvut.cz}

\begin{abstract}
We characterize strongly finitary monads on categories $\Pos$, $\CPO$ and $\DCPO$ as precisely those preserving sifted colimits.
Or, equivalently, enriched finitary monads preserving reflexive coinserters.
We study sifted colimits in general enriched categories.

For $\CPO$ and $\DCPO$ we characterize varieties of continuous algebras as precisely the monadic categories for strongly finitary monads.
\end{abstract}

\keywords{Sifted colimit, monad, continuous algebra}

\maketitle

\section{Introduction}

Sifted colimits in ordinary categories, which are essentially combinations of filtered colimits and reflexive coequalizers, play a fundamental role in categorical algebra as demonstrated in~\cite{adamek+rosicky+vitale}.
For example, varieties of finitary algebras are precisely the free completions of duals of algebraic theories under sifted colimits.
For ordered algebras varieties, considered as enriched categories over $\Pos$ (the cartesian closed category of posets) are precisely the free completions of duals of enriched algebraic theories under enriched sifted theories~\cite{AR23}.
Sifted colimits are (as we prove below in Theorem~\ref{T:free} and~\ref{T:preser}) essentially combinations of filtered colimits and reflexive coinserters.

In the present paper we study sifted colimits in general enriched categories.
A weight $W$ is called sifted if colimits of diagrams in the base category weighted by $W$ commute with finite products.
Example: if the base category is cartesian closed, filtered colimits are sifted.
We characterize the free completion $\Sind \K$ of an enriched category $\K$ under sifted colimits.
For example: if the base category is $\Pos$, then we prove
\[
\Sind \K = \Ind (\Rci \K)
\]
where $\Rci \K$ is the free completion under reflexive coinserters, and $\Ind$ denotes, as usual, the free completion under filtered colimits (Theorem~\ref{T:free}).

We apply our results, besides $\Pos$, to the categories $\DCPO$ of directed-complete posets (where directed subsets have joins, and morphisms are continuous maps: monotone maps preserving directed joins) and $\CPO$ of $\omega$-complete posets (where $\omega$-chains have joins and morphisms, also called continuous maps, preserve $\omega$-joins).
For all these cartesian closed base-categories $\V$ we prove that an enriched functor between cocomplete $\V$-categories preserves sifted colimits iff it is strongly finitary.
(This concept, introduced by Kelly and Lack~\cite{kelly+lack:strongly-finitary}, means that the functor in question is an enriched left Kan extension of its restriction to finite sets.)
See Example~\ref{E:strong} and Propositions~\ref{P:CPO} and~\ref{P:DCPO}.

We study varieties of continuous algebras, which are complete posets endowed with continuous operations of a given signature.
They play an important role in the semantics of programs.
Varieties, i.e.\ equational classes, were studied intensively since 1970's, see e.g.~\cite{ADJ}, \cite{N} or~\cite{anr}.
Equational presentations apply \emph{extended terms}: besides variables and composite terms $t = \sigma(t_0,\dots,t_{n-1})$ for $n$-ary symbols $\sigma$ (and terms $t_i$) we also admit formal joins of terms.
A \emph{variety of continuous algebras} is a class presented by a set of equations between extended terms.
We study two variants: varieties over $\CPO$ and over $\DCPO$.
We call algebras over $\CPO$ \emph{continuous} and those over $\DCPO$ \emph{$\Delta$-continuous}.

Every variety $\Vvar$ of continuous algebras has free algebras, thus it generates a free-algebra monad $\Tmon_\Vvar$ on $\CPO$.
Moreover, $\Vvar$ is isomorphic to the category $\CPO^{\Tmon_\Vvar}$ of monadic algebras for $\Tmon_\Vvar$.
A question arises: which monads are of the form $\Tmon_\Vvar$?
We prove that each monad $\Tmon_\Vvar$ is strongly finitary (Theorem~\ref{T:necessity}) and, conversely, every strongly finitary monad on $\CPO$ is proved to be the free-algebra monad of a variety (Theorem~\ref{T:suff}).
We conclude that varieties of continuous algebras and strongly finitary monads on $\CPO$ bijectively correspond.
More precisely: the category of varieties is dually equivalent to the category of strongly finitary monads (Corollary~\ref{C:main}).
Analogously, varieties of $\Delta$-continuous algebras bijectively correspond to strongly finitary monads on $\DCPO$.

\subsection*{Related Work}
Sifted colimits in ordinary categories were introduced by Lair~\cite{La} (who called them `tamisante') and studied intensively e.g.\ in~\cite{adamek+rosicky:sifted}, \cite{ARV10}, \cite{chen}, and~\cite{Jo}.
In enriched categories they were introduced by the unpublished preprint~\cite{DV} and for the special case of the base category $\Cat$ (small categories) they were studied by Bourke~\cite{bourke:thesis}.
Properties of sifted colimits, and their relationship to strongly finitary functors, have not been investigated in the enriched context so far.

Our results on continuous algebras are closely related to a number of results characterizing monads on a category $\C$ corresponding to varieties of algebras in $\C$:
\begin{enumerate}
	\item For $\C = \Set$ this is a classical result due to Linton~\cite{Li}: varieties correspond to finitary monads on $\Set$, see e.g.~\cite{maclane:cwm}, Theorem~VI.8.1.
	Moreover, a set functor is finitary iff it is strongly finitary, i.e.\ a left Kan extension of its restriction to finite sets.
	
	\item For $\C = \Pos$ it was Kurz and Velebil~\cite{kurz-velebil:exactness} who proved that varieties of ordered algebras (classes presented by inequations) correspond to strongly finitary monads.
	We have presented a simplified proof in~\cite{adv:ordered-algebras}.

	\item The idea of using extended terms (Definition~\ref{D:ext}) stems from~\cite{anr}.
	There Birkhoff's Variety Theorem was proved: varieties are precisely the HSP classes.
	However, the terms used in op.~cit.\ are a bit more general than those we introduce below: for allowing to form a term $t = \bigvee_{k \in \Nat} t_k$ we request that the number of variables in all the terms $t_k$ be finite.
	
	\item Strongly finitary monads on $\CPO$ are also studied by Jiří Rosický~\cite{JR23}.
	He proves a bijective correspondence to varieties of algebras, but his syntax is different from ours.
	
	\item Since $\CPO$ is a locally $\aleph_1$-presentable category, another bijective correspondence to varieties is presented by Bourke and Garner~\cite{bourke-garner:monads-and-theories}.
	
\end{enumerate}

{\textbf{Acknowledgement.}}
The authors are grateful to Jiří Rosický for interesting discussions.

\section{Ordintary Sifted Colimits}

We present a short summary of results on sifted colimits in ordinary categories (for comparison with the weighted sifted colimits studied in later sections).
These colimits were called `tamisante' by Lair~\cite{La}, the name `sifted' stems from~\cite{adamek+rosicky:sifted}.

\begin{definition}
\label{D:sif}
A small category $\D$ is called \emph{sifted} if colimits of diagrams over it in $\Set$ commute with finite products.
\end{definition}

\begin{example}[\cite{adamek+rosicky:sifted}]
\label{E:copr}
Every small category with finite coproducts is sifted.
\end{example}

\begin{proposition}[\cite{gu} or \cite{adamek+rosicky:sifted}, Theorem~2.15]
\label{P:delta}
A small category $\D$ is sifted iff $\D \neq \emptyset$ and the diagonal $\diag: \D \to \D \times \D$ is a final functor.
\end{proposition}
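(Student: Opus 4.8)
The plan is to compute both sides of the product-comparison map via the standard description of $\Set$-valued colimits as connected components of categories of elements, and then to reduce the general product-preservation condition to the case of representable functors.

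First I would record the representable computation, which is really the heart of the statement. For $F\colon\D\to\Set$ the colimit $\operatorname{colim}_\D F$ is the set $\pi_0$ of connected components of the category of elements of $F$. Applied to a representable $\D(d,-)$, whose category of elements has the initial object $(d,\mathrm{id}_d)$, this gives $\operatorname{colim}_\D \D(d,-)\cong 1$. The key observation is that the category of elements of the product $\D(d_1,-)\times\D(d_2,-)$ is \emph{exactly} the comma category $(d_1,d_2)/\diag$: its objects are cospans $d_1\xrightarrow{f_1} d\xleftarrow{f_2} d_2$ and its morphisms are the compatible maps $d\to d'$. Hence $\operatorname{colim}_\D\big(\D(d_1,-)\times\D(d_2,-)\big)=\pi_0\big((d_1,d_2)/\diag\big)$, whereas $\operatorname{colim}_\D\D(d_1,-)\times\operatorname{colim}_\D\D(d_2,-)=1$. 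So the comparison map is a bijection on this pair of representables precisely when $(d_1,d_2)/\diag$ is nonempty and connected.

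With this in hand the forward direction is immediate: if $\D$ is sifted then $\operatorname{colim}_\D$ preserves the empty product, i.e.\ sends the constant functor at $1$ to $1$; since that colimit is $\pi_0(\D)$, this forces $\D$ nonempty and connected. Preservation of binary products, evaluated on $\D(d_1,-)$ and $\D(d_2,-)$, then says every $(d_1,d_2)/\diag$ is nonempty and connected, which is finality of $\diag$. Conversely, assuming $\D\neq\emptyset$ and $\diag$ final, finality makes each $(d_1,d_2)/\diag$ nonempty, so any two objects are joined by a cospan and $\D$ is connected; thus the terminal object is preserved. It remains to prove the canonical comparison $c_{D,D'}\colon\operatorname{colim}_\D(D\times D')\to\operatorname{colim}_\D D\times\operatorname{colim}_\D D'$ is a bijection for \emph{all} $D,D'$, and for this I would reduce to the representable case above. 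The mechanism: both the source and the target of $c_{D,D'}$ preserve colimits separately in $D$ and in $D'$, because $\operatorname{colim}_\D$ is a left adjoint, $-\times D'$ preserves colimits in the cartesian closed category $\Set^\D$, and $-\times S$ preserves colimits in the cartesian closed category $\Set$. Since every $D\colon\D\to\Set$ is a canonical colimit of representables, naturality of $c$ together with this separate cocontinuity lets me reduce first $D$ and then $D'$ to representables, where bijectivity is already known (both sides being singletons).

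The hard part is this last reduction step: one must justify that the two functors $D\mapsto\operatorname{colim}_\D(D\times D')$ and $D\mapsto\operatorname{colim}_\D D\times\operatorname{colim}_\D D'$ genuinely preserve colimits, which rests entirely on finite products distributing over colimits in both $\Set$ and $\Set^\D$ (i.e.\ their cartesian closedness). Once that is in place, everything else is bookkeeping with categories of elements and the density of representables.
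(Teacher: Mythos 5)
The paper does not prove this proposition at all --- it is quoted from Gabriel--Ulmer and from \cite{adamek+rosicky:sifted}, so there is no in-paper argument to compare against. Your proof is correct and is essentially the standard argument from those sources: the identification of the category of elements of $\D(d_1,-)\times\D(d_2,-)$ with the comma category $(d_1,d_2)/\diag$, the computation of colimits as $\pi_0$ of categories of elements, and the reduction from arbitrary diagrams to representables via cocontinuity of both sides of the comparison map (using cartesian closedness of $\Set$ and of $\Set^{\D}$) are all sound and complete. Nothing is missing.
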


\begin{definition}
A \emph{sifted colimit} in a category is a colimit of a diagram with a sifted domain.
\end{definition}

\begin{examples}
\label{E:sif}
\phantom{phantom}
\begin{enumerate}

	\item  Filtered colimits are sifted.
	Indeed, analogously to Definition~\ref{D:sif}, a possible definition of a filtered category $\D$ is that colimits of diagrams over $\D$ in $\Set$ commute with finite limits.
	
	\item \emph{Reflexive coequalizers} are sifted colimits.
	Recall that a parallel pair is called \emph{reflexive} if it consists of split epimorphisms with a joint splitting.
	A reflexive coequalizer is a colimit of a diagram whose domain os $\Delta_1^\op$ for the following truncation $\Delta_1$ of $\Delta$ (the category of positive finite ordinals $n = \{ 0, \dots, n-1 \}$ and monotone maps):
	\[
	\tikz[%remember picture, 
	overlay]{
		\filldraw[fill=white,draw=black] (1.35,-1.2) rectangle (1.85,1.2);
		\filldraw[fill=white,draw=black] (0.05,-0.2) rectangle (0.45,0.3);
	}
	\begin{tikzcd}[row sep=small, column sep=tiny]
		&&&
		1
%		\arrow[dd, no head]
		&& \\
		0
		\arrow[urrr, mapsto, "\delta_1"]
		\arrow[drrr, mapsto, "\delta_0", swap]
		&&&
		\phantom{x} \arrow[lll, rightsquigarrow, "\sigma"]
		&&%\; \phi Y
		\\
%		\phantom{\phi X}
		&&& 0 &&
	\end{tikzcd}
	\]
\end{enumerate}
\end{examples}

\begin{remark}
\label{R:coeq}
We can state (cum grano solis) that
\begin{center}
sifted colimits = filtered colimits + reflexive coequalizers.
\end{center}
For example, if $\K$ is a category with finite coproducts, then the following are equivalent:
\begin{enumerate}
	
	\item $\K$ has filtered colimits and reflexive coequalizers.
	
	\item $\K$ has sifted colimits.
	
	\item $\K$ is cocomplete.
	
\end{enumerate}
Indeed (1) implies that $\K$ has coproducts (using filtered colimits).
Thus (3) follows: the famous construction of colimits via coproducts and coequalizers uses, in fact, reflexive coequalizers.
A more important support of the above slogan is the following
\end{remark}

\begin{theorem}[\cite{ARV10}, Theorem 2.1]
\label{T:coeq}
Let $\K$ and $\LL$ be cocomplete categories.
A functor $F: \K\to \LL$ preserves sifted colimits iff it preserves filtered colimits and reflexive coequalizers.
\end{theorem}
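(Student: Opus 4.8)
The plan is to prove the two implications separately, with the forward direction being essentially trivial and the converse being the substantive content. For the forward direction, recall from Examples~\ref{E:sif} that both filtered colimits and reflexive coequalizers are sifted colimits (their domains, a filtered category and $\Delta_1^\op$ respectively, are sifted diagram schemes). Hence any functor $F$ that preserves all sifted colimits automatically preserves these two special cases, giving us the ``only if'' direction immediately.

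For the converse, suppose $F$ preserves filtered colimits and reflexive coequalizers; I must show it preserves colimits over an arbitrary sifted category $\D$. The natural strategy is to exploit the slogan ``sifted = filtered + reflexive coequalizers'' at the level of the diagram itself, i.e.\ to express an arbitrary $\D$-colimit canonically as a filtered colimit of reflexive coequalizers (or vice versa) built out of the diagram data, in such a way that $F$ visibly commutes with each layer. First I would reduce to the case where $\K$ is the presheaf category $\Set^{\D^\op}$ (or rather, use that computing a colimit $\colim D$ for $D \colon \D \to \K$ factors through the canonical presentation of the constant-at-the-colimit object), but more directly I would use the following standard presentation: for any $\D$-diagram, the colimiting cocone can be obtained as a reflexive coequalizer of a pair of maps between coproducts indexed by the objects and morphisms of $\D$. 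The issue is that these coproducts need not exist or be preserved, since $\K,\LL$ are only assumed cocomplete, not to have $F$ preserve arbitrary coproducts.

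The key technical device, then, is to invoke the characterization of sifted categories from Proposition~\ref{P:delta}: $\D$ is sifted iff $\D \neq \emptyset$ and the diagonal $\diag \colon \D \to \D \times \D$ is final. This finality is exactly what lets one interchange a $\D$-colimit with binary products (and by induction finite products), and it is the structural fact that couples ``commuting with finite products'' to the combinatorics of the diagram. The cleanest route I anticipate is to cite the already-established abstract result that the free sifted-colimit completion of a category is obtained by freely adding filtered colimits to the free reflexive-coequalizer completion (the ordinary-category shadow of the paper's own Theorem~\ref{T:free}); concretely, every sifted colimit can be built by first forming reflexive coequalizers and then a filtered colimit of the resulting objects. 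Given such a canonical two-step decomposition of $\colim_\D D$ that lives inside $\K$ and whose every stage is either a reflexive coequalizer or a filtered colimit, preservation by $F$ follows by applying the two hypotheses stage by stage.

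The main obstacle I expect is establishing that this decomposition is genuinely \emph{functorial and intrinsic} — that one can write $\colim_\D D$ as $\colim$ (filtered) of $\colim$ (reflexive coequalizer) of diagrams assembled from $D$ without assuming more cocompleteness or preservation than is available, and that $F$ of the decomposition is again a legitimate such decomposition computing $\colim_\D FD$. In other words, the heart of the argument is a diagram-combinatorial lemma (essentially the density/finality content of Proposition~\ref{P:delta}) showing that sifted diagrams are, up to the relevant colimit, iterated filtered colimits of reflexive coequalizers; once that lemma is in hand the preservation statement is formal. Since the result is attributed to \cite{ARV10}, I would expect the cited proof to package precisely this decomposition, and I would follow its structure rather than reconstruct the finality computation from scratch.
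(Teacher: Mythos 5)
The paper gives no proof of this theorem at all: it is quoted from \cite{ARV10}, so the only in-paper point of comparison is the proof of the enriched analogue, Theorem~\ref{T:preser}. Your ``only if'' direction is correct and complete. The ``if'' direction, however, is a strategy rather than a proof: the obstacle you yourself flag (that the decomposition must be intrinsic and must not require $F$ to preserve coproducts) is exactly where the content of the theorem lies, and your proposal does not overcome it.

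Concretely, two things are missing. First, the decomposition $\Sind\K = \Ind(\Rec\K)$ of Theorem~\ref{T:pb} cannot be applied to the sifted index category $\D$ itself, since that theorem requires pullbacks or finite coproducts and an arbitrary sifted $\D$ need have neither; so there is no canonical presentation of $\colim_\D D$ as a filtered colimit of reflexive coequalizers of diagrams assembled inside $\D$. The fix, used both in \cite{ARV10} and in the paper's proof of Theorem~\ref{T:preser}, is to pass to the full subcategory $\C \subseteq \K$ obtained by closing $D[\D]$ under the finite coproducts of $\K$: this is essentially small, has finite coproducts, and $D$ factors as $\D \to \C \hookrightarrow \K$. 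Second, the preservation argument is then not ``apply $F$ stage by stage to a decomposition of $\colim_\D D$'' but a uniqueness-of-extensions argument: writing $C^{*} : \Sind\C \to \K$ and $(F \comp C)^{*} : \Sind\C \to \LL$ for the sifted-colimit-preserving extensions of the inclusion and of $F \comp C$ (which exist because $\K$ and $\LL$ are cocomplete), one shows $F \comp C^{*} = (F \comp C)^{*}$ by observing that both preserve filtered colimits, hence it suffices to compare them on $\Rec\C \subseteq \Ind(\Rec\C) = \Sind\C$, where both preserve reflexive coequalizers and both restrict to $F \comp C$ on $\C$, so they coincide by the universal property of $\Rec\C$. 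Evaluating this identity at the object of $\Sind\C$ representing $\colim_\D D$ then yields $F(\colim_\D D) \cong \colim_\D (F \comp D)$. Your proposal names the right slogan but leaves precisely this machinery --- the passage to $\C$ and the two nested appeals to universal properties of free completions --- unexecuted, so as written the converse direction is not established.
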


\begin{notation}
\phantom{phantom}
\begin{enumerate}

	\item $\Sind \K$ denotes the free completion of a category $\K$ under sifted colimits.
	(Analogous to the free completion $\Ind \K$ under filtered colimits.)
	
	\item $\Rec \K$ denotes the free completion of $\K$ under reflexive coequalizers.
	
\end{enumerate}
\end{notation}

\begin{theorem}[\cite{adamek+rosicky:sifted}, Corollary 2.7 and 2.8]
$\Sind \K$ is the category of all presheaves that are sifted colimits of representables in $[\K^\op,\Set]$.
If $\K$ has finite coproducts, then $\Sind \K$ is the category of all presheaves preserving finite products.
\end{theorem}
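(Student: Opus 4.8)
The plan is to realize $\Sind \K$ as a full subcategory of the presheaf category $[\K^\op,\Set]$ and to exploit the density of representables throughout. Since $\Sind \K$ is \emph{by definition} the free completion of $\K$ under sifted colimits, I would first check that the subcategory $\mathcal{S} \subseteq [\K^\op,\Set]$ consisting of those presheaves that arise as sifted colimits of representables is itself closed under sifted colimits. This is the technical core of the first assertion: given a sifted diagram in $\mathcal{S}$, each of whose values is already a sifted colimit of representables, one assembles a single indexing category — a Grothendieck-style construction gluing the outer sifted shape to the inner sifted shapes — and verifies, using Definition~\ref{D:sif} (that sifted colimits commute with finite products in $\Set$), that this combined category is again sifted. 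Once $\mathcal{S}$ is shown to be closed under sifted colimits and to contain all representables, the universal property is a routine consequence of density: representables are dense in $[\K^\op,\Set]$, hence in $\mathcal{S}$, so any functor $F\colon \K \to \LL$ into a category with sifted colimits extends, via the pointwise left Kan extension along the Yoneda embedding, to an essentially unique sifted-colimit-preserving functor $\mathcal{S} \to \LL$. This identifies $\mathcal{S}$ with $\Sind \K$.

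For the second assertion, assume $\K$ has finite coproducts. I would first dispatch the easy inclusion. Each representable $\K(-,K)$ preserves finite products, since a map out of a finite coproduct in $\K$ is precisely a tuple of maps out of the summands (and a map out of the initial object is unique). Moreover the class of finite-product-preserving presheaves is closed under sifted colimits: colimits in $[\K^\op,\Set]$ are computed pointwise, and a sifted colimit in $\Set$ commutes with finite products by the very definition of siftedness, so a sifted colimit of finite-product-preserving presheaves again preserves finite products. Combining this with the first assertion yields the inclusion of $\Sind \K$ into the full subcategory of finite-product-preserving presheaves.

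The reverse inclusion is the substantive step. Given a finite-product-preserving presheaf $P$, I would present it as the canonical colimit of representables indexed by its category of elements $\mathrm{El}(P)$, and then prove that $\mathrm{El}(P)$ is sifted by appealing to Proposition~\ref{P:delta}. Non-emptiness is immediate from $P(0) \cong 1$, which furnishes a canonical object. For finality of the diagonal one must show that, for any two elements $(K_1,x_1)$ and $(K_2,x_2)$, the relevant comma category is nonempty and connected. Non-emptiness comes from the isomorphism $P(K_1 + K_2)\cong PK_1\times PK_2$: the element $\langle x_1,x_2\rangle$ it provides, together with the two coproduct injections, is a common cocone on the pair. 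I expect the connectivity of this comma category to be the main obstacle: one must connect two arbitrary cocones by a zigzag, again manufactured by forming further coproducts and exploiting the product-preservation equations to reconcile the two families of structure maps. Once $\mathrm{El}(P)$ is shown to be sifted, $P$ is exhibited as a sifted colimit of representables, completing the identification with $\Sind \K$.
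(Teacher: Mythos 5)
First, note that the paper itself does not prove this theorem: it is quoted from \cite{adamek+rosicky:sifted} (Corollary~2.7 and~2.8), and the enriched analogue proved in Section~4 takes a different route from yours --- the first assertion is obtained there from the saturation of the class of sifted weights (citing \cite{albert+kelly} and \cite{kelly+schmitt}), and the reverse inclusion of the second assertion is obtained by observing that a finite-product-preserving presheaf $F$ is \emph{itself} a sifted weight (Example~\ref{E:sift}), so that the canonical presentation $F = \Colim{F}{Y}$ already exhibits $F$ as a sifted colimit of representables. Your route through the category of elements is the classical one and is viable in principle, but as written it defers exactly the two steps that carry the mathematical content.

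For the first assertion, the claim that the glued (Grothendieck-style) indexing category is again sifted is precisely the nontrivial point --- it is what makes the one-step closure of the representables under sifted colimits already closed under sifted colimits --- and you assert it without argument. Verifying finality of the diagonal for such a glued category requires combining zigzags in the base with zigzags in the fibres and is not a routine consequence of Definition~\ref{D:sif}; this is where a real proof must be supplied (or the saturation results cited). For the second assertion, you correctly reduce to showing that $\mathrm{El}(P)$ is sifted and correctly identify connectivity of the comma categories as the remaining issue, but you leave it unresolved. It has an immediate solution that you miss: the object $(K_1+K_2,\langle x_1,x_2\rangle)$ you construct for non-emptiness is in fact a \emph{coproduct} of $(K_1,x_1)$ and $(K_2,x_2)$ in $\mathrm{El}(P)$ (the universal property follows by applying $P(K_1+K_2)\cong PK_1\times PK_2$ to a copairing $[f_1,f_2]$), and $(0,\ast)$ with $P(0)\cong 1$ is initial. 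Hence $\mathrm{El}(P)$ has finite coproducts and is sifted by Example~\ref{E:copr}; equivalently, your cocone is initial in the relevant comma category, which yields connectivity at once. With these two points repaired the argument goes through.
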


\begin{corollary}[\cite{adamek+rosicky:sifted}, Theorem 3.10]
Varieties of finitary algebras are, up to equivalence, precisely the categories $\Sind \K$, where $\K$ is a small category with finite coproducts.
\end{corollary}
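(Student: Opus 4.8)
The plan is to read the statement off the structural description of $\Sind\K$ recorded in the preceding theorem, combined with the classical Lawvere--Linton correspondence between finitary varieties and algebraic theories. Throughout I identify a presheaf on $\K$ with a functor $\K^\op\to\Set$; since $\K$ has finite coproducts, $\K^\op$ has finite products, and such a functor preserves finite products exactly when it carries finite coproducts of $\K$ to products of $\Set$. Thus the preceding theorem supplies a canonical equivalence
\[
\Sind\K \;\simeq\; \mathrm{FP}(\K^\op,\Set),
\]
the category of finite-product-preserving functors, i.e.\ the category of models of the algebraic theory $\K^\op$ (a small category with finite products).

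First I would treat the inclusion ``every $\Sind\K$ is a variety''. Since $\K^\op$ is a small finite-product category it is a finitary algebraic theory, and by the classical theorem its category of models $\mathrm{FP}(\K^\op,\Set)$ is a variety of finitary algebras: the objects of $\K^\op$ supply the sorts, a choice of generators turns the hom-sets of $\K^\op$ into finitary operation symbols, and the composition together with the product structure yields the defining equations. Hence $\Sind\K\simeq\mathrm{FP}(\K^\op,\Set)$ is a variety.

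Conversely, given a variety $\mathcal V$ with free-algebra functor $F$, I would recover a suitable $\K$ as the dual of its algebraic theory. Let $\K\subseteq\mathcal V$ be the full subcategory on the free algebras on finitely many generators. Because $F$ is a left adjoint it preserves coproducts, so $\K$ is closed under finite coproducts and is therefore a small category with finite coproducts, while $\K^\op$ is the (finitary) theory of $\mathcal V$. The comparison functor sending $A\in\mathcal V$ to $\mathcal V(-,A)|_{\K}\colon\K^\op\to\Set$ takes values in finite-product-preserving functors — since coproducts of $\K$ are sent to products — and is, by Lawvere's theorem, an equivalence $\mathcal V\simeq\mathrm{FP}(\K^\op,\Set)\simeq\Sind\K$.

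With the equivalence $\Sind\K\simeq\mathrm{FP}(\K^\op,\Set)$ handed to us by the preceding theorem, the only genuine content left is the classical correspondence between finitary varieties and algebraic theories: that the models of an algebraic theory always form a variety, and that every variety is reconstructed from its theory of finitely generated free algebras. This Lawvere--Linton duality is the crux; the rest is bookkeeping, namely getting the variance of ``product-preserving presheaf'' right and checking that $F$ preserves finite coproducts so that $\K$ really has them. One point deserving care is the intended reading of ``variety'': as $\K^\op$ may have many non-isomorphic objects, the associated theory is in general many-sorted, so ``variety'' here must be understood as the category of models of a (possibly many-sorted) finitary algebraic theory, which is precisely the class this duality characterizes.
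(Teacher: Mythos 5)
Your argument is correct and is exactly the intended derivation: the paper offers no proof of its own but cites \cite{adamek+rosicky:sifted}, and the route there is precisely yours, namely identifying $\Sind\K$ with the finite-product-preserving functors on the algebraic theory $\K^\op$ (via the preceding theorem) and then invoking the classical Lawvere--Linton correspondence in both directions, recovering $\K$ from a variety as the finitely generated free algebras. Your closing caveat that ``variety'' must be read in the many-sorted sense is also the correct reading of the cited result.
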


One of the most important supports of the slogan in Remark~\ref{R:coeq} is the following

\begin{theorem}
\label{T:pb}
If a category $\K$ has either pullbacks or finite coproducts, then
\[
\Sind \K = \Ind(\Rec \K).
\]
\end{theorem}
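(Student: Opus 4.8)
The plan is to show that the fully faithful composite embedding $E\colon \K \to \Rec\K \to \Ind(\Rec\K)$ exhibits $\Ind(\Rec\K)$ as the free completion of $\K$ under sifted colimits, i.e.\ that it enjoys the universal property defining $\Sind\K$. Concretely, I would verify two things: first, that $\Ind(\Rec\K)$ has sifted colimits; and second, that for every category $\LL$ with sifted colimits, precomposition with $E$ induces an equivalence between the sifted-colimit-preserving functors $\Ind(\Rec\K)\to\LL$ and all functors $\K\to\LL$. The engine driving the second point is Theorem~\ref{T:coeq}, which lets me trade ``preserves sifted colimits'' for ``preserves filtered colimits and reflexive coequalizers''.

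For the universal property I would build the desired equivalence as a composite of two standard ones. By the universal property of $\Ind$, filtered-colimit-preserving functors $\Ind(\Rec\K)\to\LL$ correspond to functors $\Rec\K\to\LL$; by the universal property of $\Rec$, reflexive-coequalizer-preserving functors $\Rec\K\to\LL$ correspond to functors $\K\to\LL$. The point to check is that these two restrictions are compatible: a filtered-colimit-preserving functor on $\Ind(\Rec\K)$ preserves reflexive coequalizers iff its restriction to $\Rec\K$ does. This rests on the observation that reflexive coequalizers are \emph{finite} colimits (colimits over the finite category $\Delta_1^\op$), hence commute with filtered colimits in $[\K^\op,\Set]$; consequently every reflexive coequalizer in $\Ind(\Rec\K)$ is a filtered colimit of reflexive coequalizers taken in $\Rec\K$, and is therefore computed from data the restricted functor already controls. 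Combining the two correspondences with Theorem~\ref{T:coeq} then identifies the sifted-colimit-preserving functors out of $\Ind(\Rec\K)$ with $[\K,\LL]$, as required.

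It remains to establish that $\Ind(\Rec\K)$ actually has sifted colimits, and here the hypothesis on $\K$ enters; I would split into the two cases. If $\K$ has finite coproducts, then so does $\Rec\K$ (finite coproducts commute with reflexive coequalizers), whence $\Ind(\Rec\K)$ has finite coproducts together with filtered colimits, thus all coproducts, and finally, by the construction of colimits from coproducts and reflexive coequalizers recalled in Remark~\ref{R:coeq}, it is cocomplete --- in particular it has sifted colimits, and Theorem~\ref{T:coeq} applies to it on the nose. This also realizes all three completions as closures of the representables inside $[\K^\op,\Set]$: since filtered colimits and reflexive coequalizers are themselves sifted (Examples~\ref{E:sif}), the closure $\Ind(\Rec\K)$ sits inside $\Sind\K$, and cocompleteness gives the reverse containment.

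The main obstacle is the pullback case, where $\Ind(\Rec\K)$ need not be cocomplete (it may lack coproducts), so neither the slogan of Remark~\ref{R:coeq} nor Theorem~\ref{T:coeq} is available verbatim. Here I would argue directly that $\Ind(\Rec\K)$ is closed under sifted colimits inside $[\K^\op,\Set]$: the essential step is to decompose an arbitrary sifted colimit of objects of $\Ind(\Rec\K)$ as a filtered colimit of reflexive coequalizers of such objects, exploiting the pullbacks in $\K$ (equivalently, a calculus of the diagram shape afforded by the finality of $\diag\colon\D\to\D\times\D$ from Proposition~\ref{P:delta}). Making this decomposition precise, and verifying that Theorem~\ref{T:coeq} can be upgraded from ``cocomplete'' to merely ``having sifted colimits'' so that the functor correspondence above still runs, is where I expect the real work to lie.
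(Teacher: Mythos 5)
The paper offers no proof of this theorem at all: it cites Chen (Theorem~9.1) for the pullback case and \cite{adamek+rosicky:sifted} (Corollary~2.8) for the finite-coproduct case. So the comparison has to be made against those sources. Measured that way, your proposal sketches the easier half plausibly and leaves the harder half genuinely open. In the finite-coproduct branch your outline can be completed along standard lines: $\Rec \K$ then has all finite colimits, its objects are finitely presentable in $[\K^\op,\Set]$, so $\Ind(\Rec \K)$ is locally finitely presentable and the inclusion $\Ind(\Rec\K) \hookrightarrow [\K^\op,\Set]$ preserves filtered colimits and reflexive coequalizers, whence Theorem~\ref{T:coeq} applied \emph{to that inclusion} shows the subcategory is closed under sifted colimits, giving $\Sind\K \subseteq \Ind(\Rec\K)$. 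Your phrase ``cocompleteness gives the reverse containment'' elides exactly this point --- a cocomplete full subcategory need not be closed under colimits of the ambient category --- and two further steps are asserted rather than proved: that every reflexive coequalizer diagram in $\Ind(\Rec\K)$ lifts to a filtered colimit of such diagrams in $\Rec\K$ (this needs the fact that finite diagrams in an $\Ind$-completion are filtered colimits of finite diagrams in the original category), and that Theorem~\ref{T:coeq} can be used for a target $\LL$ that merely has sifted colimits rather than being cocomplete, which the universal property of $\Sind$ requires.

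The genuine gap is the pullback case. There $\Rec\K$ has no reason to admit finite coproducts, $\Ind(\Rec\K)$ need not be cocomplete, and the decomposition of an arbitrary sifted colimit of objects of $\Ind(\Rec\K)$ into a filtered colimit of reflexive coequalizers is not a routine finality manipulation with $\diag : \D \to \D\times\D$: it is the substantive content of Chen's Theorem~9.1, an argument that genuinely uses the pullbacks in $\K$ and occupies most of that paper. Your proposal explicitly defers this (``where I expect the real work to lie''), so as written it establishes at most the finite-coproduct alternative of the disjunctive hypothesis and does not prove the theorem as stated.
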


The case of pullbacks was proved by Chen~\cite{chen}, Theorem~9.1, for finite coproducts see~\cite{adamek+rosicky:sifted}, Corollary~2.8.

\begin{example}
\phantom{phantom}
\begin{enumerate}

	\item For the category $\Set_\fin$ of finite sets we have
	\[
	\Sind \Set_\fin = \Ind \Set_\fin = \Set.
	\]
	Analogously let $\Vect$ be the category of vector spaces over a given field and $\Vect_\fin$ the full subcategory of finite-dimensional spaces.
	Then
	\[
	\Sind \Vect_\fin = \Ind \Vect_\fin = \Vect.
	\]
	
	\item For the category $\Ab$ of abelian groups if $\Ab_\fg$ denotes the subcategory of free finitely generated groups, then
	\[
	\Sind \Ab_\fg = \Ab.
	\]
	In contrast, $\Ind \Ab_\fg$ is the subcategory of all free abelian groups.
	
\end{enumerate}
\end{example}

\section{Sifted Weighted Colimits}

A natural generalization of sifted colimits to $\V$-categories, where $\V$ is a symmetric monoidal closed category, was presented by~\cite{DV}.
Here we show that under certain assumptions sifted colimits are just a combination of filtered colimits and reflexive coinserters.

\begin{assumption}
In this section $\V = (\V_\ordi,\tensor,I)$ denotes a symmetric monoidal closed category with finite products.
\end{assumption}

Let us recall the concept of a weighted colimit in a $\V$-category $\K$~\cite{BK}.
Given a diagram $D: \D \to \K$ and a weight $W: \D^\op \to \V$, both $\V$-functors, a weighted colimit is an object
\[
C = \Colim{W}{D}
\]
together with isomorphisms
\[
\phi_X : \K(C,X) \xrightarrow{\sim} [\D^\op, \V](W,\K(D\blank,X))
\]
natural in $X \in \K$.
The unit of the colimit is the natural transformation
\[
u = \phi_C(\id_C) : W \to \K(D\blank,C).
\]

\begin{examples}
We concentrate on four categories $\V$.
They are all cartesian closed, and in each case $\V$-natural transformations between $\V$-functors are just ordinary natural transformations between the ordinary underlying functors.
\begin{enumerate}

	\item $\Pos$: posets and monotone maps.
	A $\Pos$-enriched category carries partial orders on hom-sets making composition monotone.
	A $\Pos$-enriched functor $F: \K\to \LL$ is locally monotone: $f \sqleq g$ in $\K(A,B)$ implies $Ff \sqleq Fg$ in $\LL(FA,FB)$.

	\item $\CPO$: cpos and continuous maps. See Appendix A.
	
	\item $\DCPO$: dcpos and $\Delta$-continuous maps. See Appendix A.
	
	\item $\Cat$: small categories and functors.
	A $\Cat$-enriched category carries a category structure on hom-sets making composition a functor.
	
\end{enumerate}
\end{examples}

We denote by $\Pos_\fin$ the full subcategory of finite posets in $\Pos$, analogously for $\CPO_\fin$ and $\DCPO_\fin$.

\begin{definition}
\phantom{phantom}
\begin{enumerate}
	
	\item A weight $W: \D^\op \to \V$ is \emph{sifted} if colimits of diagrams in $\V$ weighted by $W$ commute with finite products.
	
	\item Let $\K$ be a $\V$-category.
	A \emph{sifted colimit} in $\K$ is a colimit weighted by a sifted weight.
	
\end{enumerate}
\end{definition}

\begin{example}
\label{E:sift}
Analogously to Example~\ref{E:copr}, if $\D$ has finite conical coproducts, every weight preserving finite coproducts is sifted (\cite{kelly+lack:strongly-finitary}, Lemma~2.3).
\end{example}

\begin{remark}
The following proposition was proved by Peter Johnstone in his PhD thesis~\cite{PJ}.
We provide a short proof for the convenience of the reader.
\end{remark}

\begin{proposition}
\label{P:fil}
For a cartesian closed category $\V$ filtered colimits are sifted.
\end{proposition}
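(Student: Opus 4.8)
The plan is to pin down the weight that governs filtered colimits and then to verify commutation with finite products by hand. A (conical) filtered colimit is the colimit $\Colim{\Delta I}{D}$ weighted by the constant weight $\Delta I : \D^\op \to \V$ at the unit $I$ (which, $\V$ being cartesian closed, is the terminal object), where $\D$ is a filtered ordinary category viewed as a $\V$-category. Such a conical colimit is computed in the underlying category $\V_\ordi$ as the ordinary filtered colimit, and finite products in $\V$ are the ordinary finite products of $\V_\ordi$; so I would reduce the whole statement to the assertion that the ordinary filtered-colimit functor on $\V_\ordi$ preserves finite products. The empty product is the easy case: a filtered category is connected, so the colimit of the constant diagram at the terminal object $I$ is $I$ itself, i.e.\ the terminal object is preserved.

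For binary products the leverage is cartesian closedness: for each object $A$ the functor $-\times A$ has the internal hom $(-)^A$ as a right adjoint, and hence preserves every colimit that exists. Given $D, E : \D \to \V$ with $\D$ filtered, I would distribute the two colimits one factor at a time to obtain
\[
\Big(\Colim{\Delta I}{D}\Big) \times \Big(\Colim{\Delta I}{E}\Big) \;\cong\; \Colim{\Delta I}{P},
\]
where $P : \D \times \D \to \V$ is the external product $(i,j) \mapsto D_i \times E_j$ and the right-hand colimit is conical over $\D \times \D$; this is the Fubini law for iterated colimits. Now since $\D$ is filtered it is in particular sifted, so by Proposition~\ref{P:delta} the diagonal $\diag : \D \to \D \times \D$ is final, and restricting $P$ along $\diag$ leaves the colimit unchanged. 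As $P \circ \diag$ is precisely the pointwise product $D \times E$, this exhibits $\Colim{\Delta I}{(D\times E)}$ as $\big(\Colim{\Delta I}{D}\big)\times\big(\Colim{\Delta I}{E}\big)$, as required. An easy induction then covers all finite products.

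I expect the work to be bookkeeping rather than conceptual: one must check that the chain of isomorphisms is the \emph{canonical} comparison morphism, so that we genuinely obtain commutation of the colimit with products and not merely an abstract isomorphism, and one must take care that no global cocompleteness of $\V$ is secretly used — only the filtered colimits that actually occur are needed, precisely because $-\times A$ preserves whatever colimits happen to exist. The one conceptual ingredient is the finality of the diagonal, which is already supplied by Proposition~\ref{P:delta} together with the fact (Examples~\ref{E:sif}) that every filtered category is sifted.
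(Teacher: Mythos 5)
Your proposal is correct and follows essentially the same route as the paper: reduce to the external product over $\D \times \D$ using that $- \times A$ is a left adjoint (cartesian closedness) together with the Fubini law for iterated colimits, and then invoke the finality of the diagonal $\diag : \D \to \D \times \D$ from Proposition~\ref{P:delta} to pass to the pointwise product over $\D$. The only additions beyond the paper's argument are your explicit treatment of the empty product and the remarks on canonicity, both of which are harmless.
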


\begin{proof}
\phantom{phantom}
\begin{enumerate}
	
	\item For an arbitrary small $\V$-category $\D$ we first observe that given diagrams $D_1,D_2: \D \to \V$ and forming
	\[
	D_1 * D_2: \D \times \D \to \V, \quad (x_1,x_2) \mapsto (D_1 x_1) \times (D_2 x_2)
	\]
	then
	\[
	\colim (D_1 * D_2) = (\colim D_1) \times (\colim D_2).
	\]
	More precisely: let $(c_x^1)$ be the colimit cocone of $D_1$ and $(c_y^2)$ that of $D_2$, then $D_1 * D_2$ has the colimit cocone $c_x^1 \times c_y^2: (D_1 x) \times (D_2 y) \to (\colim D_1) \times (\colim D_2)$.
	Indeed, for every $x \in \ob D_1$ we have
	\[
	(D_1 x) \times C_2 = \colim_y (D_1 x \times D_2 y)
	\]
	since $(D_1 x) \times \blank$ is a left adjoint.
	Thus, using that iterated colimits are simultaneous colimits (\cite{maclane:cwm}, Proposition p.\ 230), we get
	\begin{align*}
	\colim D_1 * D_2 & = \colim_{(x,y)} ((D_1 x) \times (D_2 y)) \\
	& \cong \colim_x \colim_y (D_1 x) \times (D_2 y) \\
	&  = \colim_x (D_1 x) \times C_2 \\
	& \cong C_1 \times C_2
	\end{align*}
	since $\blank \times C_2$ is a left adjoint.
	
	\item Let $\D$ be a small filtered category.
	Then $\diag: \D \to \D \times \D$ is final (Proposition~\ref{P:delta}).
	Given diagrams $D_1,D_2: \D \to \V$, form $D_1 * D_2$ as in (1), so that the triangle below commutes:
	\[
	\begin{tikzcd}
	{\D} \ar[rr, "\diag"] \ar[rd, swap, "D_1 \times D_2"] & & {\D\times \D} \ar[ld, "D_1 * D_2"] \\
	& {\V} &
	\end{tikzcd}
	\]
	From (1) we get, since $\colim D \cong \colim (D_1 \times D_2)$, the desired canonical isomorphism
	\[
	\colim (D_1 \times D_2) \cong (\colim D_1) \times (\colim D_2).
	\]
	
\end{enumerate}
\end{proof}

Using an analogous argument, we see that in every cartesian closed category reflexive coequalizers are sifted colimits.
In categories enriched over $\Pos$ (or over a basic subcategory, see below) we prove that reflexive coinserters are sifted colimits.
We recall the concept of a coinserter now and show below that it is a weighted colimit:

\begin{example}
\label{E:coins}
In the category $\Pos$ consider a parallel pair\footnote{We use indices $0$ and $1$ to indicate that $f_0$ comes first and $f_1$ second.
Thus $(f_0,f_1)$ is an \emph{ordered} pair; however $f_0 \sqleq f_1$ is \emph{not} assumed.}
$f_0,f_1: A \to B$.
Its \emph{coinserter} is a universal morphism $c: B \to C$ with respect to $c \comp f_0 \sqleq c \comp f_1$.
That is:
\begin{enumerate}

	\item Every morphism $c': B \to C'$ with $c' \comp f_0 \sqleq c' \comp f_1$ factorizes through $c$.

	\item Given $u,v: C \to D$ with $u \comp c \sqleq v \comp c$, it follows that $u \sqleq v$.
	
\end{enumerate}
A coinserter can be constructed as follows: consider the category of pre-orders.
Every pre-order $(X,\sqleq)$ has a posetal reflection: the quotient modulo the equivalence $\sqleq \cap \sqsupseteq$.
Let $\preceq$ be the least preorder on $|B|$ which contains both the order of $B$ and the relation
\[
\{ (f_0(a),f_1(a)) \mid a \in A \}.
\]
Then the coinserter $c: B \to C$ is the posetal reflecion of $(B,\preceq)$.
\end{example}

\begin{remark}
\label{R:id}
Every poset $P$ is a coinserter of a parallel pair between two sets (= discrete posets).
	Indeed, if $|P|$ denotes the underlying set and $R \subseteq |P| \times |P|$ the order-relation with projections $\pi_0,\pi_1: R \to |P|$, then the following is a coinserter:
	\[
	\begin{tikzcd}[column sep=3em]
		R \ar[r, bend left, "\pi_1"above] \ar[r, bend right, swap, "\pi_0"below] & {|P|} \ar[r, "\id"] & P
	\end{tikzcd}
	\]
%	\item Coinserters are weighted colimits, as we make explicit next.
\end{remark}

\begin{example}
\label{E:N}
Consider the following $\omega$-chain of the posets $C_k = \{0, 1, \dots, k\}$:
\[
\tikz[%remember picture, 
overlay]{
		\filldraw[fill=white,draw=black] (0,-0.2) rectangle (0.6,0.5);
	}
\begin{tikzcd}[row sep=tiny, column sep=tiny]
	0
\end{tikzcd}
\hookrightarrow
\tikz[%remember picture, 
overlay]{
		\filldraw[fill=white,draw=black] (0,-0.9) rectangle (0.6,0.9);
	}
\begin{tikzcd}[row sep=tiny, column sep=tiny]
	1
	\arrow[dd, no head]
	\\
	\\
	0
\end{tikzcd}
\hookrightarrow
\tikz[%remember picture, 
overlay]{
		\filldraw[fill=white,draw=black] (0,-1.2) rectangle (0.6,1.2);
	}
\begin{tikzcd}[row sep=tiny, column sep=tiny]
	2
	\arrow[d, no head]
	\\
	1
	\arrow[d, no head]
	\\
	0
\end{tikzcd}
\hookrightarrow \dots
\]
Its colimit in $\Pos$ is the linearly ordered set $\Nat$.
In contrast, the colimit in $\CPO$ is
\[
\Nat^\top = \Nat \cup \{ \top \}
\]
obtained from the chain of natural numbers by adding a top element $\top$ to it.
Thus $\Nat^\top$ is contained in the closure of $\Set_\fin$ under reflexive coinserters and $\omega$-colimits (using the preceding remark).
\end{example}

\begin{definition}
\label{D:basic}
A subcategory $\V$ of $\Pos$, not necessarily full, is \emph{basic} if it is cartesian closed, closed under limits, contains $\Pos_\fin$ as a full subcategory, and contains all morphisms of the form $1 \to X$ for all $X \in \ob \V$.
\end{definition}

\begin{example}
$\CPO$ and $\DCPO$ are basic subcategories of $\Pos$.
\end{example}

\begin{definition}
\label{D:coins}
Let $\V$ be a basic subcategory of $\Pos$.
Then the \emph{coinserter} of a parallel pair $f_0,f_1: A \to B$ is the weighted colimit of the following diagram
\[
\tikz[%remember picture, 
overlay]{
	\filldraw[fill=white,draw=black] (0.0,-0.2) rectangle (2.3,1.2);
	\filldraw[fill=white,draw=black] (3.7,-0.2) rectangle (6.0,1.2);
}
\begin{tikzcd}[column sep=scriptsize,row sep=small]
	{a \bullet} & {\bullet b} && A & B \\
	& \D &&& \V
	\arrow["{f_1}", curve={height=-6pt}, from=1-4, to=1-5]
	\arrow["{f_0}"', curve={height=6pt}, from=1-4, to=1-5]
	\arrow["{\varphi_0}"', curve={height=6pt}, from=1-1, to=1-2]
	\arrow["{\varphi_1}", curve={height=-6pt}, from=1-1, to=1-2]
	\arrow["D", shorten <=10pt, shorten >=10pt, from=1-2, to=1-4]
\end{tikzcd}\
\]
with hom-objects of $\D$ discrete and with the following weight $W_0$
\[
\tikz[%remember picture, 
overlay]{
	\filldraw[fill=white,draw=black] (1.3,-1.2) rectangle (1.9,1.3);
	\filldraw[fill=white,draw=black] (2.5,-0.2) rectangle (2.9,0.3);
}
\begin{tikzcd}[row sep=small, column sep=tiny]
	&
	1
	\arrow[dd, no head]
	&&&  \\
	W_0 a &
	&& *  %\; \phi Y
	\arrow[ull, mapsto, "W_0 \phi_1", swap]
	\arrow[dll, mapsto, "W_0 \phi_0"]
	& W_0 b
	\\
	\phantom{\phi X} & 0 &&&
\end{tikzcd}
\]
(where $0 \sqleq 1$).
We speak about \emph{reflexive coinserters} in case the pair $f_0,f_1$ is reflexive.
\end{definition}

Explicitly, a coinserter in $\V$ is given by a morphism $c: B \to A$ universal with respect to $c \comp f_0 \sqleq c \comp f_1$.
Indeed, the unit $u: W \to [D\blank,C]$ of the colimit $C = \Colim{W}{D}$ has the component $u_b$ given by $c: B \to C$ and the component $u_a$ given by $c \comp f_0$ and $c \comp f_1$.
Thus $W \phi_0 \sqleq W \phi_1$ yields $c \comp f_0 \sqleq c \comp f_1$.
The universality of $c$ follows from the definition of weighted colimit.

\begin{theorem}
\label{T:rci}
Reflexive coinserters are sifted colimits in every basic subcategory $\V$ of $\Pos$.
\end{theorem}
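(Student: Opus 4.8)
The plan is to verify directly that the weight defining reflexive coinserters is sifted, i.e.\ that coinserters of reflexive pairs commute with finite products in $\V$. Note first that a product of two reflexive pairs is again reflexive: if $s^i$ is a common section of $(f^i_0,f^i_1)$, then $s^1\times s^2$ is one of $(f^1_0\times f^2_0,\,f^1_1\times f^2_1)$; and commutation with all finite products reduces to the binary and the empty case. The empty case is immediate, since weighting the constant diagram at $1$ by $W_0$ is the coinserter of $\id,\id\colon 1\to 1$, which is $1$. Throughout I will argue through the represented functors $\V(\blank,Y)$ and invoke the enriched Yoneda lemma, so that no explicit construction of the coinserter — and nothing specific to $\Pos$ beyond cartesian closedness, products and the sections — is needed; this is what makes the argument uniform over every basic $\V$.

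For the binary case let $D_1,D_2\colon \D\to\V$ have vertices $A_i=D_i a$, $B_i=D_i b$, legs $f^i_0,f^i_1\colon A_i\to B_i$ and common sections $s^i\colon B_i\to A_i$. Write $C_i=\Colim{W_0}{D_i}$ and $C=\Colim{W_0}{(D_1\times D_2)}$, the latter being the coinserter of the product pair $f^1_0\times f^2_0,\,f^1_1\times f^2_1$. The universal property recorded after Definition~\ref{D:coins} gives, naturally in $Y\in\V$,
\[
\V(C,Y)\;\cong\;\{\,h\colon B_1\times B_2\to Y \mid h\comp(f^1_0\times f^2_0)\sqleq h\comp(f^1_1\times f^2_1)\,\},
\]
the \emph{diagonal} condition. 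To describe $\V(C_1\times C_2,Y)$ I reproduce part~(1) of the proof of Proposition~\ref{P:fil}: since each of $X\times\blank$ and $\blank\times X$ is a left adjoint, hence preserves weighted colimits, an iterated-colimit (Fubini) computation yields
\[
C_1\times C_2\;\cong\;\Colim{W_0\tensor W_0}{(D_1 * D_2)},
\]
where $(D_1 * D_2)(x,y)=D_1 x\times D_2 y$ on $\D\times\D$. Computing the $\V$-natural transformations $W_0\tensor W_0\Rightarrow\V((D_1 * D_2)\blank,Y)$, the monotonicity carried by the two copies of $W_0$ forces exactly the two \emph{separate} conditions, so that, naturally in $Y$,
\[
\V(C_1\times C_2,Y)\;\cong\;\{\,h \mid h\comp(f^1_0\times\id)\sqleq h\comp(f^1_1\times\id)\ \text{and}\ h\comp(\id\times f^2_0)\sqleq h\comp(\id\times f^2_1)\,\}.
\]

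It remains to compare the two subsets of $\V(B_1\times B_2,Y)$. That the separate conditions imply the diagonal one is general: precomposing the first with $\id\times f^2_0$ and the second with $f^1_1\times\id$ and chaining gives $h\comp(f^1_0\times f^2_0)\sqleq h\comp(f^1_1\times f^2_0)\sqleq h\comp(f^1_1\times f^2_1)$. The converse is exactly where reflexivity enters, and it is one line: precomposing the diagonal inequality with $\id\times s^2$ and using $f^2_j\comp s^2=\id$ recovers $h\comp(f^1_0\times\id)\sqleq h\comp(f^1_1\times\id)$, and symmetrically precomposition with $s^1\times\id$ recovers the other separate condition. Hence the two subsets coincide for every $Y$; under these identifications the map induced by the canonical comparison $C\to C_1\times C_2$ is the identity inclusion, so $\V(\blank,Y)$ applied to the comparison is an isomorphism, naturally in $Y$, and Yoneda finishes the binary case — whence commutation with all finite products.

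The step I expect to carry the weight of the argument is the identification of $\V(C_1\times C_2,Y)$ with the separate-conditions subset, i.e.\ the passage through the external product $W_0\tensor W_0$ and the two-variable naturality bookkeeping; this parallels Proposition~\ref{P:fil} but must be carried out for the coinserter weight rather than a filtered one. By contrast the decisive use of the hypothesis — reflexivity — is just the precomposition with the sections $s^i$, which is the enriched analogue of the classical padding that makes reflexive coequalizers sifted.
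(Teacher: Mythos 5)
Your argument is correct, and it reaches the conclusion by a genuinely different route from the paper's. The paper fixes the two coinserters $c, c'$ with splittings $\delta, \delta'$ and verifies by hand both clauses of the universal property of a coinserter for $c \times c'$ against the pair $f_0 \times f_0'$, $f_1 \times f_1'$: the factorization clause is obtained by passing to the adjoint transpose $\wh{d} : B' \to [B,D]$, factoring through $c'$, and then through $c \times C'$; the order-reflection clause is handled the same way. You instead compare represented functors: you identify $\V(C_1 \times C_2, \blank)$ via the external-product weight $W_0 \tensor W_0$ (a weighted Fubini interchange, which is a standard but genuine extension of the conical computation in the first part of the proof of Proposition~\ref{P:fil}), identify both sides with full subposets of $\V(B_1 \times B_2, Y)$ cut out by the ``separate'' and ``diagonal'' inequalities respectively, and conclude by Yoneda after checking that the canonical comparison induces the inclusion of one subposet into the other. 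The mathematical core is the same in both proofs: products with a fixed object preserve coinserters because $X \times \blank$ is a left adjoint, and precomposition with the sections converts the diagonal inequality into the two separate ones --- exactly the paper's precomposition of (q1) with $\delta \times A'$ and $A \times \delta'$. What your packaging buys is a cleaner structure (no two-stage factorization and no separate order-reflection argument), at the price of invoking the Fubini theorem for weighted colimits and of one small unstated verification: a natural transformation $W_0 \tensor W_0 \Rightarrow \V((D_1 * D_2)\blank, Y)$ also has a component at the object $(a,a)$, namely the map $\Two \times \Two \to \V(A_1 \times A_2, Y)$ sending $(i,j)$ to $h \comp (f^1_i \times f^2_j)$, whose required monotonicity is an extra condition; you should record that it follows from the two separate conditions by precomposing each with the appropriate $\id \times f^2_j$ and $f^1_i \times \id$, so that the separate-conditions subposet is indeed all of $\V(C_1 \times C_2, Y)$.
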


\begin{proof}
It is our task, given coinserters with splittings $\delta$ and $\delta'$ as follows:
\[
\begin{tikzcd}
	{A}
	\arrow[r, shift left, "f_1"]
	\arrow[r, shift right, swap, "f_0"]
	&
	{B}
	\arrow[r, "c"]
	\arrow[shiftarr={yshift=5ex}, l, swap, "\delta"] 
	&
	C
\end{tikzcd}
\quad
\begin{tikzcd}
	{A}
	\arrow[r, shift left, "f_1'"]
	\arrow[r, shift right, swap, "f_0'"]
	&
	{B}
	\arrow[r, "c'"]
	\arrow[shiftarr={yshift=5ex}, l, swap, "\delta'"] 
	&
	C
\end{tikzcd}
\]
to prove that $c \times c'$ is a coinserter of $f_0 \times f_0'$ and $f_1 \times f_1'$.
Thus reflexive coinserters commute with binary products.
The statement for empty product is trivial.
\begin{enumerate}[label=(\alph*)]
	
	\item Given a morphism $d : B \times B' \to D$ with
	\begin{equation}
		\tag{q1}
		d \comp (f_0 \times f_0') \sqleq d \comp (f_1 \times f_1')
	\end{equation}
	we prove that it factorizes through $c \times c'$.
	Precomposing with $\delta \times A'$ we get $d \comp (B \times f_0') \sqleq d \comp (B \times f_1')$.
	Therefore, the adjoint transpose $\wh{d} : B' \to [B,D]$ fulfils
	\[
	\wh{d} \comp f_0' \sqleq \wh{d} \comp f_1' : B' \to [B,D].
	\]
	By the universal property of $c'$ we thus get a morphism $g : B \times C' \to D$ such that $\wh{g}$ is a factorization of $\wh{d}$ through $c'$:
	\[
	\begin{tikzcd}
		B' \ar[r, "c'"] \ar[rd, swap, "\wh{d}"] & C' \ar[d, "\wh{g}"] \\
		& {[B,D]}
	\end{tikzcd}
	\]
	In other words, the following lower triangle commutes:
	\begin{equation}
		\label{q2}
		\tag{q2}
		\begin{tikzcd}
			{A \times B'} \ar[r, "A \times c'"] \ar[d, shift left, "f_1 \times B'"] \ar[d, shift right, swap, "f_0 \times B'"] & {B \times C'} \ar[d, shift left, "f_1 \times C'"] \ar[d, shift right, swap, "f_0 \times C'"] \\
			{B \times B'} \ar[r, "B \times c'"] \ar[rd, swap, "d"] & {B \times C'}  \ar[d, "g"] \\
			& D
		\end{tikzcd}
	\end{equation}
	Now precomposing (q1) with $A \times \delta'$ yields in (the square above) that
	\[
	d \comp (f_0 \times B') \sqleq d \comp (f_1 \times B').
	\]
	Thus the above diagram proves
	\[
	g \comp (f_0 \times C') \comp (A \times c') \sqleq g \comp (f_1 \times C') \comp (A \times c').
	\]
	From (1) we know that $A \times c'$ is the coinserter of $A \times f_0', A \times f_1'$.
	Using the universal property of this coinserter, we get
	\[
	g \comp (f_0 \times C') \sqleq g \comp (f_1 \times C').
	\]
	From (1) we also know that $f_0 \times C', f_1 \times C'$ have the coinserter $c \times C'$, and thus $g$ factorizes as follows
	\[
	\begin{tikzcd}
		{B \times C'} \ar[r, "c \times C'"] \ar[rd, swap, "g"] & {C \times C'}  \ar[d, "h"] \\
		& D
	\end{tikzcd}
	\]
	Combining this with the lower triangle in~\eqref{q2} yields
	\[
	d = g \comp (B \times c') = h \comp (c \times c')
	\]
	as desired.
	
	\item Let $u, v : C \times C' \to X$ fulfil
	\[
	u \comp (c \times c') \sqleq v \comp (c \times c').
	\]
	We prove $u \sqleq v$.
	We can rewrite the above inequality as follows
	\[
	[u \comp (c \times B')] \comp (C \times c') \sqleq [v \comp (c \times B')] \comp (C \times c').
	\]
	Denote the adjoint transposes of $u \comp (c \times B')$ and $v \comp (c \times B')$ by $\wt{u}, \wt{v} : B' \to [C,X]$, respectively.
	The adjoint transpose of the above inequation yields
	\[
	\wt{u} \comp c' \sqleq \wt{v} \comp c'.
	\]
	The universal property of $c'$ thus implies $\wt{u} \sqleq \wt{v}$, which (by inverse to the adjoint transpose) proves
	\[
	u \comp (c \times B') \sqleq v \comp (c \times B').
	\]
	Since by (1) the morphism $c \times B'$ is a coinserter, we get $u \sqleq v$.
	
\end{enumerate}
\end{proof}

Analogously to Remark~\ref{R:coeq} we can state, for basic subcategories of $\Pos$, that
\begin{center}
sifted colimits = filtered colimits + reflexive coinserters.
\end{center}
For example the following proposition supports this, more important support is presented in Section 4.

\begin{proposition}
Let $\V$ be a basic subcategory of $\Pos$.
If $\K$ is a $\V$-category with finite conical coproducts, then equivalent are:
\begin{enumerate}
	
	\item $\K$ has filtered colimits and reflexive coinserters.
	
	\item $\K$ has sifted colimits.
	
	\item $\K$ is cocomplete.
	
\end{enumerate}
\end{proposition}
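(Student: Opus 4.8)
The plan is to prove the cycle $(3) \Rightarrow (2) \Rightarrow (1) \Rightarrow (3)$, the first two implications being essentially formal and the last one carrying all the work. For $(3) \Rightarrow (2)$ there is nothing to do: a cocomplete $\V$-category admits all small weighted colimits, in particular the sifted ones. For $(2) \Rightarrow (1)$ I would invoke Proposition~\ref{P:fil}, by which filtered colimits are sifted, and Theorem~\ref{T:rci}, by which reflexive coinserters are sifted; hence a $\V$-category with all sifted colimits has both filtered colimits and reflexive coinserters.

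The substance is $(1) \Rightarrow (3)$, which I would carry out by assembling all small weighted colimits from the data in (1) together with the assumed finite conical coproducts. First I would build arbitrary small conical coproducts: $\coprod_{i \in I} A_i$ is the filtered colimit, over the directed poset of finite subsets $F \subseteq I$, of the finite coproducts $\coprod_{i \in F} A_i$; the enriched universal property is checked by applying $\K(\blank,Z)$ and computing $\lim_F \prod_{i \in F}\K(A_i,Z) \cong \prod_{i \in I}\K(A_i,Z)$ in $\V$, which is legitimate since $\V$ is closed under limits. Next I would upgrade reflexive coinserters to \emph{all} coinserters: given any pair $g_0,g_1 \colon X \to Y$, the coinserter of $(g_0,g_1)$ coincides with the coinserter of the pair $[g_0,\id_Y],[g_1,\id_Y] \colon X + Y \to Y$, and the latter pair is reflexive with common splitting the injection $Y \to X + Y$. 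These two coinserters classify the same morphisms out of $Y$, namely those $h$ with $h \comp g_0 \sqleq h \comp g_1$, so they agree as weighted colimits. Thus, with coproducts in hand, reflexive coinserters already furnish all coinserters.

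With coproducts and coinserters available I would then produce the two remaining kinds of colimit. Conical coequalizers arise as composites of two coinserters: for $f_0,f_1 \colon A \to B$, take the coinserter $c_1 \colon B \to C_1$ of $(f_0,f_1)$ and then the coinserter $c_2 \colon C_1 \to C_2$ of $(c_1 \comp f_1,\, c_1 \comp f_0)$; applying $\K(\blank,Z)$ turns each coinserter into an inserter in $\V$, and the composite inserter computes to $\{k \colon B \to Z \mid k \comp f_0 = k \comp f_1\}$, i.e.\ the equalizer in $\V$, which is exactly the enriched universal property of the conical coequalizer. Coproducts together with coequalizers give all conical colimits by the usual construction. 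Tensors (copowers) $X \cdot A$ for $X \in \V$ I would construct from Remark~\ref{R:id}: writing $X$ as the coinserter of the projections $R \rightrightarrows |X|$ of its order relation, I set $X \cdot A$ to be the coinserter of the induced pair $\coprod_R A \rightrightarrows \coprod_{|X|} A$; applying $\K(\blank,Z)$ and using $\K(\coprod_{s \in S} A, Z) \cong \prod_{s \in S}\K(A,Z)$ for a set $S$, the resulting inserter consists of the families $(h_x)_{x \in |X|}$ in $\prod_{|X|}\K(A,Z)$ that respect the relation $R$, i.e.\ of the monotone maps $X \to \K(A,Z)$, which is precisely $\V(X,\K(A,Z))$; this is the defining isomorphism of the tensor. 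Finally, a $\V$-category having all small conical colimits and all tensors is cocomplete by the standard theory of weighted colimits (\cite{BK}), since every weighted colimit is a coend and hence a conical coequalizer of coproducts of copowers; this yields (3).

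The main obstacle I anticipate is not any single construction but the bookkeeping needed to make each universal property genuinely \emph{enriched} rather than merely underlying: the decisive observations are that $\K(\blank,Z)$ sends each of these colimits to the corresponding inserter, equalizer, or product in $\V$, and that, since $\V \subseteq \Pos$ is closed under limits, these $\V$-limits may be computed as the evident subposets and products of hom-posets. The reflexivity trick in the second step and the identification of $\V(X,\K(A,Z))$ with the inserter built from the order relation $R$ of $X$ are the two places where the special order-theoretic structure of a basic $\V \subseteq \Pos$ is really used, and where I would be most careful.
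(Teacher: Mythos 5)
Your overall architecture ($(3)\Rightarrow(2)\Rightarrow(1)$ formally, all the work in $(1)\Rightarrow(3)$ via conical colimits plus tensors plus the coend formula) is the same as the paper's, and your treatment of arbitrary coproducts, of general coinserters via the reflexivization $[g_0,\id_Y],[g_1,\id_Y]\colon X+Y\to Y$, and of coequalizers as composites of two coinserters is sound (the paper gets away with less: it only manufactures \emph{reflexive} coequalizers, as coinserters of $[u,v],[v,u]\colon A+A\to B$, which is all the coproduct-plus-coequalizer construction of conical colimits needs). The genuine gap is in your construction of tensors. You write an arbitrary $X\in\ob\V$ as the coinserter in $\V$ of the projections $\pi_0,\pi_1\colon R\to|X|$ of its order relation, and you identify the resulting inserter with $\V(X,\K(A,Z))$. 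Both steps are valid only for $\V=\Pos$ (which is exactly the scope of Remark~\ref{R:id}) or for $X$ finite. For a general basic $\V\subseteq\Pos$ the coinserter of $\pi_0,\pi_1$ represents \emph{monotone} maps out of the poset $X$, not $\V$-morphisms, so it is the reflection of the poset $X$ into $\V$ rather than $X$ itself; Remark~\ref{R:surj}(1) records precisely this failure in $\CPO$, and for instance when $X=\Nat^\top\in\CPO$ the coinserter is $\Nat^\top$ with one extra point (the $\omega$-ideal $\Nat$) inserted between the finite elements and $\top$. Correspondingly the inserter you compute is $\Pos(X,\K(A,Z))$, not $\V(X,\K(A,Z))$, and your construction produces the tensor by the reflection of the poset $X$ instead of the tensor by $X$. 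Since the coend presentation you invoke at the end requires tensors by \emph{all} the objects $Wd$ and $\D(d,d')$ of $\V$, this is not a cosmetic defect.

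The repair is the one the paper uses: run your coinserter argument only for finite $P$ (monotone maps out of a finite poset automatically lie in $\V$, so both objections vanish and $P$ really is the coinserter of $\pi_0,\pi_1$ in $\V$), and then obtain $P\tensor X$ for arbitrary $P\in\ob\V$ by expressing $P$ as an iterated filtered colimit of finite posets and using that $\blank\tensor X$, being defined by a representability condition, is forced to preserve the colimits you already have. With that amendment the rest of your argument goes through.
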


\begin{proof}
Due to Theorem~\ref{T:rci} and Proposition~\ref{P:fil} we just need to verify that (1) implies (3).

\begin{enumerate}[label=(\alph*)]
	
	\item $\K$ has colimits.
	Indeed, it has conical coproducts since they are filtered colimits of finite ones.
	It also has reflexive coequalizers (Example~\ref{E:sif}):
	given a reflexive pair $u,v: A \to B$ the pair $f_0 = [u,v] : A + A \to B$ and $f_1 = [v,u]: A + A \to B$ is also reflexive, and a coinserter of $f_0,f_1$ is precisely a coequalizer of $u$ and $v$.
	Thus, $\K$ has conical colimits: the construction of colimits via coproducts and coequalizers (\cite{maclane:cwm}, Theorem~V.2.1) uses reflexive coequalizers only.
	
	\item $\K$ has tensors: given a poset $P$, to verify that a tensor $P \tensor X$ exists for each $X \in \ob \V$ we can restric ourselves to $P$ finite and use the fact that if $P = \colim_{i \in I} P_i$ is a filtered colimit in $\Pos$ with $P_i$ finite, then $P \tensor X = \colim_{i \in I} P_i \tensor X$ (whenever all tensors $P_i \tensor X$ exist in $\V$).
	
	For $P$ finite and discrete $P \tensor X = \coprod_{|P|} X$ is a copower. For $P$ non-discrete use Remark~\ref{R:id}: we get the tensor $P \tensor X$ by forming the reflexive coinserter of $\pi_0 \tensor \id, \pi_1 \tensor \id: R \tensor X \to |P| \tensor X$: since $\blank \tensor X$ preserves weighted colimits, that coinserter has the form $c \tensor \id : |P| \tensor X \to P \tensor X$.
	
	\item  $\K$ is cocomplete since $\V_\ordi$ is complete and cocomplete: this follows from (the dual of) \cite{borceux:second}, Theorem~6.6.14.
	
\end{enumerate}
\end{proof}

\begin{examples}
\label{E:Bourke}
\phantom{phantom}
\begin{enumerate}

	\item For $\CPO$-enriched or $\DCPO$-enriched categories with finite products the existence of filtered colimits and reflexive coinserters implies cocompleteness.
	
	\item For the cartesian closed category $\Cat$ of small categories and functors Bourke~\cite{bourke:thesis} studied sifted colimits.
	The role of reflexive coinserters is played by the colimits weighted by the embedding
	\[
	W_2: \Delta_2^\op \hookrightarrow \Cat.
	\]
	Here $\Delta_2$ is the truncation of $\Delta$ (Example~\ref{E:sif} (2)) to $0$,$1$,$2$ and $W1$ and $W2$ are the categories given by the linear order $0 < 1 < 2$.
	Bourke proved e.g.\ that an enriched endofunctor of $\Cat$ preserves sifted colimits iff it preserves
	\begin{enumerate}
		\item Filtered colimits.
		\item Codescent objects of strict reflexive data, which means precisely colimits weighted by $W_2$ (\cite{bourke:thesis}, Corollary~8.45).
	\end{enumerate} 
	We shall see similar results for $\Pos$-enriched categories in Section 5.
\end{enumerate}
\end{examples}

\section{The completion $\Sind$}

A free completion of a $\V$-category $\K$ under sifted colimits is called $\Sind \K$.
We observe that it consists of presheaves that are sifted colimits of representables.
In case $\V = \Pos$, we also introduce the free completion $\Rci \K$ under reflexive coinserters and prove
\[
\Sind \K= \Ind (\Rci \K)
\]
for all enriched categories $\K$ with finite coproducts.
And we derive that an enriched functor between cocomplete categories preserves sifted colimits iff it preserves filtered colimits and reflexive coinserters.

Much of what we do with sifted weights is simply a special case of the work of Kelly and Schmitt~\cite{kelly+schmitt} that we shortly recall first.

\begin{assumption}
\label{ASS}
Throughout this and the next section $\V = (\V_\ordi,\tensor,I)$ denotes a closed symmetric monoidal category which is complete and cocomplete (has weighted limits and colimits).
Moreover, for the category $\Set_\fin$ of finite sets the functor
\[
K: \Set_\fin \to \V, \qquad KX = \coprod_X I,
\]
is assumed to be a full embedding.
\end{assumption}

All the categories $\Set$, $\Pos$, $\CPO$, $\DCPO$ and $\Cat$ satisfy these assumptions.

\begin{notation}[\cite{kelly+schmitt}]
Let $\Phi$ be a class of weights.
A $\V$-category is called \emph{$\Phi$-cocomplete} if it has colimits of diagrams weighted in $\Phi$.
A $\V$-functor is called \emph{$\Phi$-cocontinuous} if it preserves colimits weighted in $\Phi$.
The $\V$-category of all such functors from $\K$ to $\LL$ is denoted by
\[
\Phi\text{-}\Cocts(\K,\LL).
\]
\end{notation}

\begin{definition}[\cite{kelly+schmitt}]
A \emph{free completion} of a $\V$-category $\K$ under $\Phi$-colimits is a $\Phi$-cocomplete $\V$-category $\ol{\K}$ together with a $\V$-functor $E: \K \to \ol{\K}$ yielding an equivalence of categories by precomposition:
\[
(\blank) \comp E: \Phi\text{-}\Cocts(\ol{\K},\LL) \xrightarrow{\simeq} [\K,\LL].
\]
\end{definition}

It follows from~\cite{kelly+schmitt}, Proposition~4.1 that $E$ is always fully faithful.
Thus without loss of generality we can consider $\ol{\K}$ to contain $\K$ as a full subcategory (and $E$ to be the embedding).
Kelly and Schmitt give a characterization, for an arbitrary class $\Phi$ of weights, of free completions under $\Phi$-colimits:

\begin{proposition}[\cite{kelly+schmitt}, Proposition 4.3]
\label{P:general}
A $\V$-full embedding
\[
E: \K \hookrightarrow \ol{\K}
\]
is a free completion under $\Phi$-colimits iff
\begin{enumerate}

	\item $\ol{\K}$ is $\Phi$-cocomplete.
	
	\item The functor $\ol{\K}(X,\blank)$ is $\Phi$-cocontinuous for every $X \in \ob \K$.
	
	\item $\ol{\K}$ is an iterated closure of $\K$ under $\Phi$-colimits.
	
\end{enumerate}
\end{proposition}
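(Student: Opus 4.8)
The plan is to treat the biconditional direction by direction, recalling that, $E$ being fully faithful, the only real content is whether precomposition $(\blank)\comp E$ is an equivalence for every $\Phi$-cocomplete $\LL$. For the ``only if'' direction I would lean on the standard construction: a free $\Phi$-cocompletion of $\K$ always exists and is realised, up to equivalence of $\V$-categories under $\K$, by the closure $\K_\Phi$ of the representables under $\Phi$-colimits inside $[\K^\op,\V]$. In this model both (2) and (3) are transparent: (3) holds because $\K_\Phi$ is by construction the iterated $\Phi$-closure of the representables, and (2) holds because, by the enriched Yoneda lemma, $\K_\Phi(X,\blank)$ is (through the embedding) evaluation at $X$, and evaluation preserves all presheaf colimits since they are computed pointwise. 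As free completions are unique up to equivalence under $\K$, and as properties (2) and (3) are invariant under such equivalences, they transfer to any $E$ exhibiting a free completion; condition (1) is part of the definition.

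For the ``if'' direction, assume (1)--(3); I would show that $(\blank)\comp E$ is both fully faithful and essentially surjective onto $[\K,\LL]$. Full faithfulness is governed by (3). Using that every object of $\ol{\K}$ is an iterated $\Phi$-colimit of objects of $\K$, I would argue by induction along the stages of the closure that a $\V$-natural transformation between $\Phi$-cocontinuous functors $\ol{\K}\to\LL$ is uniquely determined by, and can be reconstructed from, its restriction to $\K$: at a stage $\Colim{W}{D}$ with $D$ valued in an earlier stage, the component at $\Colim{W}{D}$ is forced by $\Phi$-cocontinuity together with the universal property of the weighted colimit from the already-determined components on $D$. The same induction shows that two $\Phi$-cocontinuous functors agreeing on $\K$ agree on all of $\ol{\K}$, giving full faithfulness.

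For essential surjectivity, given $F:\K\to\LL$ with $\LL$ $\Phi$-cocomplete, I would construct a $\Phi$-cocontinuous extension $\ol{F}:\ol{\K}\to\LL$ as the left Kan extension $\mathrm{Lan}_E F$, built along the stages of the closure so that only $\Phi$-colimits of $\LL$ are invoked (passing, if needed, to the saturation of $\Phi$ so that these colimits exist and the extension is well defined). Full faithfulness of $E$ then yields $\ol{F}\comp E\cong F$ at once. The crux --- and the step I expect to be the main obstacle --- is that $\ol{F}$ is genuinely $\Phi$-cocontinuous, rather than merely an extension of $F$, and this is exactly where (2) is indispensable. Concretely, evaluating $\ol{F}$ at a $\Phi$-colimit reduces, via the coend presentation $\ol{F}A\cong\int^{X\in\K}\ol{\K}(EX,A)\tensor FX$, to interchanging two weighted colimits, and the interchange is legitimate precisely because each $\ol{\K}(X,\blank)$ preserves $\Phi$-colimits, allowing the $\Phi$-colimit to be pulled out of the first hom-variable before commuting it past the defining coend. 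The remaining difficulty is bookkeeping: the size and saturation conditions guaranteeing that the Kan extension exists and is computed by $\Phi$-colimits, which Assumption~\ref{ASS} and the closedness of $\V$ are designed to supply.
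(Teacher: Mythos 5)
The paper offers no proof of this statement: it is quoted verbatim from Kelly--Schmitt (Proposition~4.3), so the only meaningful comparison is with the argument in that reference, and your reconstruction follows essentially the same route --- realising the free completion as the iterated closure of the representables in the presheaf category for the ``only if'' direction, and for the ``if'' direction building the extension of $F:\K\to\LL$ as the pointwise left Kan extension $\ol{F}A=\Colim{\ol{\K}(E\blank,A)}{F}$, with condition~(2) used exactly where you say it is, namely to pull a $\Phi$-colimit out of the first hom-variable. Two spots deserve tightening. First, in the fullness argument your induction over the stages of the closure forces the component of $\ol{\tau}$ at each $\Colim{W}{D}$, but you must also check that this assignment is independent of the chosen colimit presentation and is $\V$-natural; a cleaner route (used by the paper itself in Remark~\ref{R:free}) is to apply the already-established essential surjectivity to the arrow category $[\Two,\LL]$, which packages a transformation $GE\to HE$ as a single functor and reduces fullness to uniqueness of extensions. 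Second, for the Kan extension to exist in a merely $\Phi$-cocomplete $\LL$ you need the weights $\ol{\K}(E\blank,A)$ to lie in the closure of the representables under $\Phi$-colimits --- which is precisely what conditions~(2) and~(3) give, since they force the restricted Yoneda functor $\ol{\K}(E\blank,\blank):\ol{\K}\to[\K^\op,\V]$ to be $\Phi$-cocontinuous and to send $\K$ to representables; stating this explicitly would replace the appeal to ``saturation and bookkeeping'' by the actual mechanism. With those two points made precise your argument is complete and agrees with the cited proof.
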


\begin{remark}
The free completion of a $\V$-category of $\K$ under filtered (conical) colimits is usually denoted by $\Ind \K$.
It is well known that $\Ind \K$ can be described as the category of all presheaves in $[\K^\op,\V]$ which are filtered colimits of representables. Analogously:
\end{remark}

\begin{notation}
The free completion of a $\V$-category $\K$ under sifted colimits is denoted by
\[
\Sind \K.
\]
\end{notation}

\begin{proposition}
For every $\V$-category $\K$ we can describe $\Sind \K$ as the category of all presheaves in $[\K^\op,\V]$ that are sifted colimits of representables.
\end{proposition}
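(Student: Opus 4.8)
The plan is to apply the Kelly--Schmitt characterization of free completions (Proposition~\ref{P:general}) to the class $\Phi$ of sifted weights. Write $Y\colon \K \to [\K^\op,\V]$ for the Yoneda embedding and let $\ol{\K} \subseteq [\K^\op,\V]$ be the full subcategory whose objects are the presheaves arising as a single sifted colimit of representables. Since $Y$ is fully faithful and factors through $\ol{\K}$ (every representable is trivially a sifted colimit of representables), it suffices to verify the three conditions of Proposition~\ref{P:general} for the embedding $E = Y\colon \K \hookrightarrow \ol{\K}$; the identification $\Sind\K = \ol{\K}$ then follows from the essential uniqueness of free completions.

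First I would dispose of condition~(2). For $X \in \ob\K$ the Yoneda lemma gives $\ol{\K}(YX, P) \cong PX$ naturally in $P \in \ol{\K}$, so the representable $\ol{\K}(YX,\blank)$ is the restriction to $\ol{\K}$ of the evaluation functor $\mathrm{ev}_X\colon [\K^\op,\V]\to\V$. Colimits in $[\K^\op,\V]$ are computed pointwise, hence $\mathrm{ev}_X$ preserves all colimits that exist there, in particular sifted ones; this yields $\Phi$-cocontinuity of $\ol{\K}(YX,\blank)$ once we know (as established next) that sifted colimits in $\ol{\K}$ coincide with those of $[\K^\op,\V]$.

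Conditions~(1) and~(3) both reduce to the single assertion that $\ol{\K}$ is \emph{closed} under sifted colimits inside $[\K^\op,\V]$. Indeed, closure immediately makes $\ol{\K}$ $\Phi$-cocomplete with its sifted colimits computed as in the presheaf category, giving~(1); and since $\ol{\K}$ is by definition generated from the representables in one step, closure forces the iterated closure of $Y(\K)$ under sifted colimits to collapse onto $\ol{\K}$, giving~(3). (The one-step closure is always contained in the iterated one; closure supplies the reverse inclusion.)

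The crux --- and the step I expect to be the main obstacle --- is therefore this closure property: a sifted colimit of presheaves, each of which is itself a sifted colimit of representables, is again a single sifted colimit of representables. Concretely, given a sifted weight $W\colon \D^\op\to\V$ and a diagram $D\colon\D\to\ol{\K}$ with $Dd \cong \Colim{W_d}{(Y\comp D_d)}$ for sifted weights $W_d$ and diagrams $D_d$ into $\K$, I would glue the data by a Fubini-type interchange of weighted colimits, rewriting $\Colim{W}{D}$ as a single weighted colimit of representables against a combined weight $\widehat{W}$ assembled from $W$ and the $W_d$ over a Grothendieck-style total domain. It then remains to check that $\widehat{W}$ is sifted, which holds because its colimit functor factors as a composite of the finite-product-preserving colimit functors of $W$ and of the $W_d$, and composites of finite-product-preserving functors preserve finite products. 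This is exactly the statement that the class of weights commuting with finite products is saturated, so that the closure of the representables under such colimits is reached in one step; this is the point at which the general machinery of Kelly and Schmitt~\cite{kelly+schmitt} does the real work, and care is needed to verify that the interchange is valid enrichedly and that products in the relevant functor categories are indeed computed pointwise.
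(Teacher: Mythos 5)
Your proposal is correct and follows essentially the same route as the paper: everything reduces to the saturation (closedness) of the class of sifted weights, which holds because that class is defined by commutation with finite products (\cite{kelly+schmitt}, Proposition~5.4). The only cosmetic difference is that you verify the three conditions of Proposition~\ref{P:general} by hand, whereas the paper invokes \cite{albert+kelly}, Proposition~7.3, which already packages the fact that for a saturated class the one-step closure of the representables is the free cocompletion.
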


More detailed: the restricted Yoneda embedding $Y$ of $\K$ into the above subcategory of $[\K^\op,\V]$ has the universal property.
This follows from~\cite{albert+kelly}, Proposition~7.3, since the class $\Phi$ of all sifted weights is saturated (aka closed).
The saturation of $\Phi$ is a consequence of the fact that $\Phi$ is defined via commutation with certain limits: see~\cite{kelly+schmitt}, Proposition~5.4.

\begin{corollary}
\label{C:sind}
For a small $\V$-category $\K$ with finite conical coproducts we have
\[
\Sind \K = \text{all finite-product preserving presheaves.}
\]
\end{corollary}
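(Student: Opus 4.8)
The plan is to compare two full subcategories of $[\K^\op,\V]$ and to show they have the same objects. By the preceding Proposition, $\Sind\K$ is the full subcategory of presheaves that are sifted colimits of representables; write $\mathcal{P}$ for the full subcategory of finite-product-preserving presheaves, where the relevant finite products are those of $\K^\op$, i.e.\ the finite conical coproducts of $\K$ (which exist by hypothesis). I would prove the two inclusions $\Sind\K\subseteq\mathcal{P}$ and $\mathcal{P}\subseteq\Sind\K$ separately.

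For $\Sind\K\subseteq\mathcal{P}$ I would first note that every representable lies in $\mathcal{P}$: the enriched universal property of coproducts gives $\K(\coprod_i B_i,A)\cong\prod_i\K(B_i,A)$ and $\K(0,A)\cong 1$, so $\K(\blank,A)$ carries finite coproducts of $\K$ to finite products of $\V$. Then I would check that $\mathcal{P}$ is closed under sifted colimits formed in $[\K^\op,\V]$. Since colimits in a presheaf category are pointwise and a sifted weight commutes with finite products by definition, for a sifted weight $W$ and a diagram $D$ of finite-product-preserving presheaves, evaluating at a finite coproduct $\coprod_j A_j$ and using that each $Di$ preserves finite products turns the value into the $W$-weighted colimit of $i\mapsto\prod_j(Di)(A_j)$; siftedness then pulls the product outside the colimit:
\[
(\Colim{W}{D})(\textstyle\coprod_j A_j)\;\cong\;\textstyle\prod_j(\Colim{W}{D})(A_j).
\]
As each object of $\Sind\K$ is a sifted colimit of representables, it therefore lies in $\mathcal{P}$.

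For the reverse inclusion the key tool is the enriched co-Yoneda (density) formula: every presheaf $P$ is canonically the colimit of the Yoneda embedding $Y\colon\K\to[\K^\op,\V]$ weighted by $P$ itself, $P\cong\Colim{P}{Y}$. This already presents $P$ as a weighted colimit of representables, so the only remaining point is that the weight is sifted. But the weight is exactly $P\colon\K^\op\to\V$, and for $P\in\mathcal{P}$ this weight preserves the finite products of $\K^\op$. Since $\K$ has finite conical coproducts, Example~\ref{E:sift} says precisely that such a finite-product-preserving weight is sifted. Hence $P\cong\Colim{P}{Y}$ exhibits $P$ as a sifted colimit of representables, so $P\in\Sind\K$.

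The main obstacle is this reverse inclusion, and within it the one genuinely load-bearing observation is that the density weight $P$ is itself sifted; the rest is bookkeeping. Getting it right amounts to matching the variance in Example~\ref{E:sift}, whose hypothesis concerns a weight on $\D^\op$ with $\D$ having finite conical coproducts, applied here with $\D=\K$ so that the weight $P$ lives on $\K^\op$. Once this is in place the two inclusions combine to give equality of the object classes, and since both subcategories are full this is the desired identification $\Sind\K=\mathcal{P}$.
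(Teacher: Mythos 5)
Your proposal is correct and follows essentially the same route as the paper: the inclusion $\Sind\K\subseteq\mathcal{P}$ via representables preserving finite products plus the definition of siftedness, and the reverse inclusion via the density presentation $P\cong\Colim{P}{Y}$ together with Example~\ref{E:sift} showing the weight $P$ is sifted. The paper merely states both steps more tersely; your variance check for Example~\ref{E:sift} is the same point the paper relies on implicitly.
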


Indeed, a presheaf $F: \K^\op \to \V$ preserves finite products iff it is a sifted colimit of representables.
The `if' direction is clear from the definition of a sifted weight and the fact that representables preserve finite products.
For the `only if' direction recall that $F$ is a sifted weight (Example~\ref{E:sift}).
Thus, the standard representation of $F$ as the colimit of $Y$ weighted by $F$
\[
F = \Colim{F}{Y}
\]
presents $F$ as a sifted colimit of representables.

\begin{remark}
\label{R:ind}
\phantom{phantom}
\begin{enumerate}

	\item Analogously, for a small $\V$-category $\K$ with finite conical limits we have
	\[
	\Ind \K = \text{all finite-limit preserving presheaves.}
	\]
	In fact, these are precisely the presheaves that are filtered colimits of representables.
	
	\item For large categories, we have to work with \emph{small presheaves}: those that are small colimits of representables.
	If a $\V$-category $\K$ has finite conical coproducts (or colimits) then $\Sind \K$ consists of small presheaves preserving finite products (and $\Ind \K$ of all small presheaves preserving finite limits).
	
\end{enumerate}
\end{remark}

In the rest of this section we work with poset-enriched categories: $\V = \Pos$.
We will prove that in case $\K$ has finite conical coproducts, its free completion under reflexive coinserters is actually a special completion under finite colimits.
From that we derive that functors preserving filtered colimits and reflexive coinserters are finitely cocontinuous.

\begin{notation}
\label{N:Rci}
The free completion of a poset-enriched category $\K$ under reflexive coinserters is denoted by
\[
\Rci \K.
\]
\end{notation}

\begin{definition}
\label{D:e}
\phantom{phantom}
\begin{enumerate}

	\item An object $X$ of $\V$ is \emph{element-finite}, shortly \emph{e-finite}, if it has finitely many global elements: $\V_\ordi(1,X)$ is a finite set.
	
	\item A weight $W: \D^\op \to \V$ is e-finite if $\ob \D$ is finite and all objects
	\[
	\D(d,d') \text{ and } Wd \text{ ($d$,$d'\in \ob \D$)}
	\]
	are e-finite.
	
	\item Colimits weighted by e-finite weights are called e-finite.
	An \emph{e-finitely cocomplete} $\V$-category is one that has e-finite colimits.
	Functors preserving e-finite colimits are called \emph{e-finitely cocontinuous}.
	
\end{enumerate}
\end{definition}

\begin{proposition}
\label{P:e}
A $\V$-category has e-finite colimits iff it has finite conical colimits and tensors with e-finite objects of $\V$.
\end{proposition}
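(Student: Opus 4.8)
The plan is to reduce both implications to the classical presentation of an arbitrary weighted colimit as a (conical) coequalizer of two maps between coproducts of tensors. Recall that for a weight $W : \D^\op \to \V$ and a diagram $D : \D \to \K$ the colimit $\Colim{W}{D}$, when it exists, is the coend $\int^{d} Wd \tensor Dd$, and that this coend is computed as the coequalizer
\[
\coprod_{d,e \,\in\, \ob\D} \D(e,d) \tensor Wd \tensor De \;\rightrightarrows\; \coprod_{d \,\in\, \ob\D} Wd \tensor Dd
\]
(see \cite{BK}), the two parallel maps being induced by the actions $\D(e,d) \tensor De \to Dd$ and $\D(e,d) \tensor Wd \to We$. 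Thus every weighted colimit is assembled from tensors (the objects $Wd \tensor Dd$ and $\D(e,d)\tensor Wd \tensor De$) together with conical colimits (the two coproducts and the coequalizer). Both directions of the proposition then amount to matching the e-finiteness conditions on $W$ against the pieces of this presentation.

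\textbf{Necessity.} Assuming $\K$ has all e-finite colimits, I would observe that each of the two listed structures is itself an e-finite colimit. A finite conical colimit is the colimit of $D : \D \to \K$ over a finite ordinary category $\D$ (regarded as a $\V$-category with hom-objects $K(\D(d,e))$) weighted by the constant weight $\Delta I$; this weight is e-finite because $\ob\D$ is finite, each hom-object $K(\D(d,e))$ is the image under $K$ of a finite set and hence e-finite (as $K$ is a full embedding and $I=1$ in each of our examples), and every weight value $I$ is e-finite. Likewise, for an e-finite object $X$ the tensor $X \tensor A$ is the colimit over the unit $\V$-category $\mathbf 1$ weighted by the weight whose single value is $X$; this weight is e-finite since $\mathbf 1$ has one object, its only hom-object is $I$, and $X$ is e-finite. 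Hence $\K$ has finite conical colimits and tensors with e-finite objects.

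\textbf{Sufficiency.} For the converse I would assume finite conical colimits and tensors with e-finite objects, and check that every ingredient of the displayed coequalizer is then available for an e-finite weight $W$. Since $\ob\D$ is finite, the two coproducts are finite conical colimits and so is the coequalizer itself, so these exist. It remains to produce the tensors. The object $Wd \tensor Dd$ is the tensor of $Dd$ by the e-finite object $Wd$, hence exists. For $\D(e,d)\tensor Wd \tensor De$ I would invoke associativity of the $\V$-action to read it as $\D(e,d) \tensor (Wd \tensor De)$: first form the e-finite tensor $Wd \tensor De$, then tensor by the e-finite hom-object $\D(e,d)$, each step being a tensor by an e-finite object and therefore available. (Alternatively, in each cartesian example $\V_\ordi(1,\blank)$ preserves finite products, so $\D(e,d)\tensor Wd$ is itself e-finite and a single tensor suffices.) Assembling these, $\Colim{W}{D}$ exists as the displayed coequalizer, so $\K$ has all e-finite colimits.

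\textbf{Main obstacle.} The genuinely non-routine point, and the one I would be most careful about, is the bookkeeping of e-finiteness through the coend: one must verify that \emph{every} $\V$-object by which a diagram value is tensored remains e-finite. This is exactly what the definition of an e-finite weight supplies (the values $Wd$ and the hom-objects $\D(e,d)$ are e-finite) together with associativity of the action, which lets us realize the triple tensors using only e-finite tensors. Everything else is the standard reduction of weighted colimits to conical colimits and tensors.
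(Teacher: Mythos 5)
Your proposal is correct and follows essentially the same route as the paper: necessity by exhibiting the weights for finite conical colimits and for tensors as e-finite, and sufficiency via the standard presentation of $\Colim{W}{D}$ as a coequalizer of a pair between finite coproducts of (iterated) tensors by the e-finite objects $Wd$ and $\D(d',d'')$, citing the dual of Kelly's coend formula. Your extra care with the triple tensor and with the e-finiteness of the unit $I$ only makes explicit what the paper leaves implicit.
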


\begin{proof}
\phantom{phantom}
\begin{enumerate}

	\item Necessity.
	It is clear that the weights for finite conical colimits are e-finite.
	Let $P$ be an e-finite object of $\V$.
	Tensors $P \tensor \blank$ are precisely colimits weighted by $W: \I^\op \to \V$ where $\I$ is the terminal $\V$-category and $W$ represents $P$ (\cite{kelly:book}, Section~1.3).
	Again, $W$ is clearly e-finite.
	
	\item Sufficiency.
	Let $\K$ be a $\V$-category satisfying the above condition.
	Then for every e-finite weight $W: \D^\op \to \K$ and every diagram $D: \D \to \K$ we can form the finite coproduct
	\[
	\coprod_{d',d''\in \D} \D(d',d'') \tensor (Wd' \tensor Wd'').
	\]
	Moreover, we obtain a canonical pair of morphisms
	\[
	\begin{tikzcd}
	{\coprod_{d',d''\in \D} \D(d',d'') \tensor (Wd' \tensor Wd'')}
	\arrow[r, shift left]
	\arrow[r, shift right]
	&
	{\coprod_{d \in \D} Wd \tensor Wd}
	\end{tikzcd}
	\]
	whose coequalizer (exists and) yields $\Colim{W}{D}$: see~\cite{kelly:book}, the dual of~(3.08).
\end{enumerate}
\end{proof}

\begin{corollary}
\label{C:e}
A $\V$-functor between e-finitely cocomplete $\V$-categories is e-finitely cocontinuous iff it preserves finite conical colimits and tensors with e-finite objects.
\end{corollary}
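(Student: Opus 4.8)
The plan is to deduce both implications directly from Proposition~\ref{P:e} and, for the substantive direction, from the explicit construction carried out in its proof.

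For the direction ``only if'', assume $F$ is e-finitely cocontinuous. The necessity part of Proposition~\ref{P:e} already exhibits the weights for finite conical colimits, and the weight on the terminal $\V$-category representing an e-finite object $P$ (whose colimit is the tensor $P \tensor \blank$), as e-finite weights. Hence finite conical colimits and tensors with e-finite objects are particular instances of e-finite colimits, and so $F$ preserves them by assumption.

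For the direction ``if'', assume $F$ preserves finite conical colimits and tensors with e-finite objects of $\V$; I will show $F(\Colim{W}{D}) \cong \Colim{W}{(F \comp D)}$ for every e-finite weight $W: \D^\op \to \V$ and every diagram $D: \D \to \K$. Here I use the construction from the sufficiency part of Proposition~\ref{P:e}: since $\ob \D$ is finite and all the objects $\D(d',d'')$ and $Wd$ are e-finite, the colimit $\Colim{W}{D}$ arises as the coequalizer of a canonical parallel pair between two finite conical coproducts whose summands are tensors $P \tensor Dd$ with $P$ an e-finite object of $\V$. The functor $F$ preserves each ingredient of this description — finite coproducts and coequalizers are finite conical colimits, and the summands are tensors with e-finite objects — so it carries the whole diagram to the coequalizer of the corresponding finite coproducts of tensors $P \tensor FDd$.

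The one point that needs care is that $F$ of the canonical parallel pair is again canonical, i.e.\ that it coincides with the pair occurring in the same construction applied to $F \comp D$. This is where the enrichment of $F$ is used: the two structural morphisms are assembled from the action of composition in $\D$ and the coherence morphisms for tensors, all of which a $\V$-functor respects. Once this identification is made, the image coequalizer diagram is precisely the one computing $\Colim{W}{(F \comp D)}$, and the desired isomorphism follows. This verification is the main, though entirely routine, obstacle.
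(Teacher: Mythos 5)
Your proof is correct and follows exactly the route the paper intends: the paper states this as an immediate consequence of Proposition~\ref{P:e}, whose necessity part shows the relevant weights are e-finite and whose sufficiency part presents any e-finite colimit as a coequalizer of a canonical pair between finite coproducts of tensors with e-finite objects, each ingredient of which is preserved by hypothesis. Your added remark about checking that $F$ sends the canonical pair to the canonical pair is the right (routine) point of care, and nothing further is needed.
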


By an \emph{e-finite cocompletion} of a $\V$-category $\K$ is meant an e-finitely cocomplete category containing $\K$ as a full subcategory.
If, moreover, $\K$ is closed under finite coproducts, we call the cocompletion \emph{plus-conservative}.

\begin{definition}
\label{D:free}
Let $\K$ be a $\V$-category with finite conical coproducts.
A plus-conservative e-finite cocompletion $\ol{\K}$ is \emph{free} if, give an e-finitely cocomplete category $\LL$, the category
\[
\A_1: \text{ all e-finitely cocontinuous functors in } [\ol{\K},\LL]
\]
is equivalent, via domain-restriction, to the category
\[
\A_2: \text{ all finite-coproduct preserving functors in } [\K,\LL].
\]
\end{definition}

\begin{remark}
\label{R:free}
\phantom{phantom}
\begin{enumerate}
	
	\item It follows that every functor $F: \K \to \LL$ preserving finite coproducts has a unique e-finitely cocontinuous extension $\ol{F}: \ol{\K} \to \LL$.
	
	\item In case $\V$ is a basic subcategory of $\Pos$, conversely, the property in (1) implies that $\ol{\K}$ is a free plus-conservative e-finite cocompletion.
	To see this, recall that $\V$ contains all morphisms from $1$; thus enriched natural transformations are just the ordinary ones.
	What we need to prove is that every natural transformation $\tau: F \to G$, a morphism in $\A_2$, yields a unique natural transformation $\ol{\tau}: \ol{F} \to \ol{G}$ between the e-finitely cocontinuous extensions with $\ol{\tau}_K = \tau_K$ for all $K \in \K$.
	
	Apply the above extension property to the morphism-$\V$-category:
	\[
	\LL^\rightarrow = [\Two,\LL] \text{ where $\Two$ is a two-chain}
	\]
	in place of $\LL$.
	To give a functor $H: \K\to \LL^\rightarrow$ in $\A_2$ means precisely to give $\tau: F \to G$ as above.
	And the unique extension $\ol{H}: \ol{\K} \to \LL^\rightarrow$ is precisely the desired natural transformation $\ol{\tau}$.
	
\end{enumerate}
\end{remark}

\begin{theorem}
\label{T:Rci}
Let $\K$ be a poset-enriched category with finite conical coproducts.
Then $\Rci \K$ is a free e-finite plus-conservative cocompletion of $\K$.
\end{theorem}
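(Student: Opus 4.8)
The plan is to apply Remark~\ref{R:free}(2): since $\Pos$ is itself a basic subcategory of $\Pos$, it suffices to establish two things, namely (A) that $\Rci \K$ is a plus-conservative e-finite cocompletion of $\K$, and (B) that every finite-coproduct-preserving $F \colon \K \to \LL$ into an e-finitely cocomplete $\LL$ admits a unique e-finitely cocontinuous extension along $E \colon \K \hookrightarrow \Rci \K$. Remark~\ref{R:free}(2) then upgrades (B) to freeness in the sense of Definition~\ref{D:free}. Throughout I would use that $E$ is fully faithful, so that $\K$ sits inside $\Rci \K$ as a full subcategory.

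For (A) I would invoke Proposition~\ref{P:e} and reduce e-finite cocompleteness to the existence of finite conical colimits together with tensors by e-finite objects (which for $\V = \Pos$ are exactly the finite posets, since $\Pos(1,X)$ is the underlying set of $X$). First I would show finite coproducts survive in $\Rci \K$: the coproduct of objects presented as iterated reflexive coinserters is computed as the reflexive coinserter of the coproducts of the presenting data — legitimate because colimits commute with colimits, so reflexive coinserters commute with finite coproducts, and a coproduct of reflexive pairs is again reflexive via the coproduct of the splittings; the base case is the finite coproducts already present in $\K$, which are thus computed in $\K$, yielding plus-conservativity. Next, reflexive coequalizers are available: the coequalizer of a reflexive pair $u,v \colon A \to B$ is the coinserter of the reflexive pair $[u,v],[v,u] \colon A + A \to B$, antisymmetry of the hom-posets turning $c\comp[u,v] \sqleq c\comp[v,u]$ into $c\comp u = c\comp v$. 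Finite coproducts and reflexive coequalizers then yield all finite conical colimits, the standard construction using reflexive coequalizers only. Finally, tensors by a finite poset $P$ come from Remark~\ref{R:id}: writing $R$ for the order relation and $\pi_0,\pi_1 \colon R \to |P|$ for its projections, the pair is reflexive via the diagonal $x \mapsto (x,x)$, and $P \tensor X$ is the reflexive coinserter of $\pi_0 \tensor X, \pi_1 \tensor X \colon \coprod_R X \to \coprod_{|P|} X$ (finite copowers); a short hom-set computation, using that maps out of a discrete copower form a power and that the induced inserter recomputes $\Pos(P,-)$, verifies the universal property. By Proposition~\ref{P:e} this establishes (A).

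For (B) I would use the defining universal property of $\Rci \K$ (Notation~\ref{N:Rci}): since the e-finitely cocomplete $\LL$ has reflexive coinserters, $F$ extends uniquely up to isomorphism to a reflexive-coinserter-preserving $\ol{F} \colon \Rci \K \to \LL$. It then remains to check $\ol{F}$ is e-finitely cocontinuous, for which Corollary~\ref{C:e} reduces the task to preservation of finite conical colimits and of tensors by finite posets. Matching the constructions of (A): $\ol{F}$ preserves finite coproducts by induction over the coinserter presentations (base case the hypothesis on $F$, inductive step that $\ol{F}$ preserves the reflexive coinserters through which coproducts were built), hence preserves reflexive coequalizers and thereby all finite conical colimits; and $\ol{F}$ preserves each tensor $P \tensor X$ since it preserves both the finite copowers and the reflexive coinserter exhibiting it. Uniqueness is immediate: any e-finitely cocontinuous extension preserves reflexive coinserters, hence coincides with $\ol{F}$ by the uniqueness clause of the $\Rci$ universal property.

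The main obstacle is the content of (A): proving that merely closing a finite-coproduct category under reflexive coinserters already produces every e-finite colimit. Everything hinges on two structural observations, namely that (i) the canonical presentation $R \rightrightarrows |P|$ of a finite poset in Remark~\ref{R:id} is \emph{reflexive}, so that tensors by finite posets are genuine reflexive coinserters of finite copowers rather than arbitrary coinserters, and (ii) finite coproducts propagate through the closure and combine with reflexive coequalizers to yield all finite conical colimits. Once these are in place the freeness is a formal consequence of the universal property of $\Rci \K$ and Remark~\ref{R:free}(2).
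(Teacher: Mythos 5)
Your overall architecture matches the paper's: reduce e-finite cocompleteness to finite conical colimits plus tensors with finite posets via Proposition~\ref{P:e}, obtain reflexive coequalizers as coinserters of $[u,v],[v,u]$, build tensors from the reflexive presentation of Remark~\ref{R:id}, extend functors via the universal property of $\Rci\K$ together with Corollary~\ref{C:e}, and upgrade to freeness by Remark~\ref{R:free}(2). But there is a genuine gap at the step you dispose of in one clause: ``the base case is the finite coproducts already present in $\K$, which are thus computed in $\K$, yielding plus-conservativity.'' This is precisely the point that requires proof, and it is the hardest part of the argument. A coproduct $K_1+K_2$ formed in $\K$ is a priori only universal against objects of $\K$; passing to $\Rci\K$ adds new objects $C$ (iterated reflexive coinserters), and one must show that every pair $K_i\to C$ still factors uniquely through $K_1+K_2$ and that the conical ($2$-dimensional) property survives. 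Your inductive step for coproducts of coinserters cannot supply this, because the universal property quantifies over all \emph{targets} in $\Rci\K$, not over presentations of the two summands; one needs a separate induction on the target $C$. That induction is nontrivial: the paper carries it out using that the hom-functors $\Rci\K(K_i,\blank)$ preserve reflexive coinserters (Proposition~\ref{P:general}), that reflexive coinserters are \emph{sifted} in $\Pos$ (Theorem~\ref{T:rci}) so that $F_1c\times F_2c$ is again a reflexive coinserter, and the explicit description of coinserters in $\Pos$ as posetal reflections of generated preorders to analyse when $(k'v_1,k'v_2)\sqleq(l'v_1,l'v_2)$ forces $k\sqleq l$. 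None of this appears in your proposal, and the general slogan ``colimits commute with colimits'' does not substitute for it, since coproducts in $\Rci\K$ are exactly what is not yet known to exist.

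A secondary, related point: your inductive step ``the coproduct of two coinserters is the coinserter of the coproducts of the presenting data'' is fine as an interchange argument \emph{once} the coproducts of the presenting data are known to exist and be conical in $\Rci\K$, but that again feeds back into the unproved base case. The paper avoids the symmetric double induction by fixing one summand $X$ and inducting on the other, constructing $X+C=\wt{C}$ as the coinserter of $\id+f_0,\id+f_1$ and verifying conicality by hand. The remaining parts of your sketch (reflexivity of $R\rightrightarrows|P|$ via the diagonal, the tensor computation, the extension and its uniqueness, and the use of Remark~\ref{R:free}(2)) are correct and agree with the paper.
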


\begin{proof}
\phantom{phantom}
\begin{enumerate}

	\item The initial object $0$ of $\K$ is initial in $\Rci \K$.
	We prove this by structural induction: to verify that every object $A$ of $\Rci \K$ has a unique morphism from $0$, we denote by $\A \subseteq \Rci \K$ the full subcategory of all objects with the desired property.
	Then we verify that
	\begin{enumerate}
		\item $\A$ contains $\K$.
		\item $\A$ is closed under reflexive coinserters in $\Rci \K$.
	\end{enumerate}
	This proves $\A = \Rci \K$, as desired.
	The statement (a) is obvious.
	To prove (b), consider a reflexive pair $f_0,f_1: A \to B$ in $\A$ and its coinserter in $\Rci \K$:
	\[
	\label{coins}
	\tag{coins}
	\begin{tikzcd}
		{A}
		\arrow[r, shift left, "f_1"]
		\arrow[r, shift right, swap, "f_0"]
		&
		{B}
		\arrow[r, "c"]
		&
		C
	\end{tikzcd}
	\]
	This coinserter is preserved by $\Rci \K(0,\blank)$ by Proposition~\ref{P:general}.
	Thus every morphism in $\Rci \K(0,C)$ factorizes through $c$.
	Since $B \in \A$, this factorization is unique.
	Hence, $\Rci \K(0,C)$ is a singleton set, as required.

	\item $\K$ is closed under conical binary coproducts in $\Rci \K$.
	Indeed, let a coproduct in $\K$ be given:
	\[
	K = K_1 + K_2 \text{ with injections } v_1, v_2.
	\]
	We proceed by structural induction.
	Let $\A$ be the full subcategory of $\Rci \K$ on all objects $X$ such that (i) for every pair $k_i: K_i \to X$ there is $k: K \to X$ with $k_i = k \comp v_i$ and (ii) given $k,l: K \to X$ with $k \comp v_i \sqleq l \comp v_i$ ($i=1,2$), then $k \sqleq l$.
	\begin{enumerate}[label=(\theenumi\alph*)]
	
		\item $\K \subseteq \A$ because the coproduct $K = K_1 + K_2$ is conical in $\K$.
		
		\item If the coinserter~\eqref{coins} fulfils $A,B \in \A$, we prove $C \in \A$.
		For that we use that $\Rci \K (K_i, \blank)$ preserve that coinserter for $i=1,2$ (Proposition~\ref{P:general}).
		
	\end{enumerate}
	For Item (i), use that $\Rci \K (K_i, C)$ is epic: coinserters in $\Pos$, as described in Example~\ref{E:coins}, are surjective.
	Thus there exist morphisms $k_i': K_i \to B$ with $k_i = c \comp k_i'$ ($i=1,2$).
	Since $B \in \A$, we have a morphism $k': K \to B$ with $k_i' = k' \comp v_i$ ($i=1,2$).
	Then $k = c \comp k'$ fulfls $k_i = k \comp v_i$.
	
	For the proof of (ii) recall that the hom-functors
	\[
	F_j = \Rci \K(K_j, \blank): \Rci \K \to \Pos \qquad \text{($j=1,2$)}
	\]
	preserve reflexive coinserters.
	Thus $F_j c$ is the coinserter of $F_j f_0$ and $F_j f_1$.
	Since reflexive coinserters in $\Pos$ are sifted colimits, $F_1c \times F_2 c$ is the reflexive coinserter of $F_1 f_i \times F_2 f_i$ for $i=0,1$.
	Following the description of coinserters in $\Pos$ (Example~\ref{E:coins}), the morphism $F_1 c \times F_2 c$ is the posetal reflection of the least pre-order on $F_1 B \times F_2 B$ containing
	\begin{enumerate}
		
		\item the order of $F_1 B \times F_2 B$ and
		
		\item the relation of all pairs $(u_0,u_1)$ given by choosing an element $(p_1,p_2) \in F_1 A \times F_2 A$ and forming $u_0 = (f_0 \comp p_1, f_0 \comp p_2)$ and $u_1 = (f_1 \comp p_1, f_1 \comp p_2)$
		\[
		\begin{tikzcd}
		{K_1} \ar[rd, "p_1"] & {K_2} \ar[d, swap, "p_2"] & \\
		& A \ar[r, shift left, "f_1"] \ar[r, swap, shift right, "f_0"] & B
		\end{tikzcd}
		\]
		As in Item~(i), we can find, for our morphisms $k,l: K \to C$, morphisms $k',l': K \to B$ with $k = c \comp k'$ and $c \comp l'$.
		The given assumption $k \comp v_j \sqleq l \comp v_j$ means that in $F_1 C \times F_2 C$ we have
		\[
		(c \comp k' \comp v_1, c \comp k' \comp v_2) \sqleq (c \comp l' \comp v_1, c \comp l' \comp v_2)
		\]
		In the proof of $k \sqleq l$ we can thus restrict ourselves to two special cases yielding the above inequality in $F_1 C \times F_2 C$:
		\begin{enumerate}[label=(\greek*)]
			\item In $F_1 B \times F_2 B$ we have $(k' \comp v_1, k' \comp v_2) \sqleq (l' \comp v_1, l' \comp v_2)$.
			\item There is $(p_1,p_2) \in F_1 A \times F_2 A$ with
			\[
			k' \comp v_j = f_0 \comp p_j \text{ and } l' \comp v_j = f_1 \comp p_j \text{ ($j=1,2$).}
			\]
		\end{enumerate}
		In case ($\alpha$), we use $B \in \A$: since $k' \comp v_j = l' \comp v_j$ for $j=1,2$, we conclude $k' \sqleq l'$.
		Therefore $k = c \comp k' \sqleq c \comp l' = l$.
		In case ($\beta$) we also use $A \in \A$: there is $p: K \to A$ with $p_j = p \comp v_j$ ($j=1,2$).
		From $B \in \A$ and the equalities
		\[
		k' \comp v_j = f_0 \comp p \comp v_j \qquad \text{($j=1,2$)}
		\]
		we derive $k' = f_0 \comp p$.
		Analogously $l' = f_1 \comp p$.
		Thus $c \comp k' \sqleq c \comp l'$; i.e.\ $k \sqleq l$
		as desired.
		Thus $C \in \A$. 
		
	\end{enumerate}
	
	\item The category $\Rci \K$ has binary conical coproducts (thus all finite ones due to Item~(1)).
	To prove this, we fix an object $X$ of $\Rci \K$ and proceed by structural induction: let $\A$ be the full subcategory on all objects $Y$ with a conical coproduct $X + Y$ in $\Rci \K$. We first prove (b), then (a).
	
	\begin{enumerate}
		\item[(3b)] $\A$ is closed under reflexive coinserters.
		Suppose in the coinserter~\eqref{coins} we have $A,B \in \A$.
		We thus can form the (obviously reflexive) coinserter $\wt{c}$ of $\id + f_0$ and $\id + f_1$:
		\[
		\begin{tikzcd}
		A \ar[r, shift left, "f_1"] \ar[r, shift right, swap, "f_0"] \ar[d, swap, "inr"] & B \ar[r, "c"] \ar[d, "inr"] & C \ar[d, dotted, "u"] \\
		{X + A} \ar[r, shift left, "\id + f_1"] \ar[r, shift right, swap, "\id + f_0"] & {X + B} \ar[r, "\wt{c}"] & {\wt{C}} \\
		X \ar[u, "inl"] \ar[ur, swap, "inl"] & & 
		\end{tikzcd}
		\]
		We have $\wt{c} \comp (\id + f_0) \leq \wt{c} \comp (\id + f_1)$ which, precomposed by the coproduct injection $inr$, yields
		\[
		(\wt{c} \comp inr) \comp f_0 \leq (\wt{c} \comp inr) \comp f_1.
		\]
		Hence we obtain a unique $u$ making the square above commutative.
		We claim that in the underlying ordinary category $\K_\ordi$ the desired coproduct is
		\[
		X + C = \wt{C} \text{ with injections $\wt{c} \comp inl$ and $u$.}
		\]
		Indeed, consider an arbitrary pair of morphisms
		\[
		k: X \to D \text{ and } h : C \to D.
		\]
		The morphism
		\[
		l = [k, h \comp c]: X + B \to D
		\]
		clearly fulfils
		\[
		l \comp (\id + f_0) \leq l \comp (\id + f_1)
		\]
		and it thus factorizes through $\wt{c}$:
		\[
		\begin{tikzcd}
		{X + A} \ar[r, shift left, "\id + f_1"] \ar[r, shift right, swap, "\id + f_0"] & {X + B} \ar[r, "\wt{c}"] \ar[d, swap, "{[k, h \comp c]}"] & {\wt{C}} \ar[dl, dotted, "\wt{l}"] \\
		& D &
		\end{tikzcd}
		\]
		The morphism $\wt{l}$ is the desired factorization: we have
		\[
		\wt{l} \comp (\wt{c} \comp inl) = k.
		\]
		Moreover,
		\[
		\ol{l} \comp u = h
		\]
		since the coinserter $c$ is epic:
		\[
		(\wt{l} \comp u) \comp c = \wt{l} \comp \wt{c} \comp inr = [k, h \comp c] \comp inr = h \comp c
		\]
		The universal property of $\ol{c}$ implies that the factorization is unique. 
		
		We still need to verify that the coproduct $X + C = \wt{C}$ is conical.
		Let $p,q: \wt{C} \to X$ fulfil
		\[
		p \comp \wt{c} \comp inl \sqleq q \comp \wt{c} \comp inl \text{ and } p \comp u \sqleq q \comp u,
		\]
		then we verify $p \sqleq q$.
		The inequality $p \comp u \sqleq q \comp u$ precomposed by $c$ yields $p \comp \wt{c} \comp inl \sqleq q \comp \wt{c} \comp inr$.
		Since the coproduct $X + B$ is conical, we conclude $p \comp \wt{c} \sqleq q \comp \wt{c}$, and the universal property of $\wt{c}$ implies $p \sqleq q$.
		
		\item[(3a)] $\A \subseteq \K$.
		For every object $K \in \K$ the full subcategory of all $Y$ with a conical coproduct $K + Y$ in $\Rci \K$ contains $\K$ by Item~(3b).
		Thus $K + Y$ is a conical coproduct for every object $Y$.
	\end{enumerate}

	\item $\Rci \K$ is e-finitely cocomplete.
	We use Proposition~\ref{P:e}.
	We know from (1) and (3) that finite conical coproducts exist.
	Next, conical coequalizers exist: given a pair $g,h: X \to Y$ in $\Rci \K$, their conical coequalizer $c$ is the following reflexive coinserter:
	\[
	\begin{tikzcd}
	{X + X + Y} \ar[r, shift left, "{[h,g,\id]}"] \ar[r, swap, shift right, "{[g,h,\id]}"] & Y \ar[r, "c"] & C
	\end{tikzcd}
	\]
	Consequently, $\Rci \K$ has finite conical colimits, and since in $\Pos$ the concepts finite and e-finite coincide, it remains to prove that $P \tensor \blank$ exist for every finite poset $P$.
	We have the canonical reflexive coinserter in Remark~\ref{R:id}.
	For every object $X$ of $\Rci \K$ we have, due to~(3) above, the finite coproducts
	\[
	R \tensor X = \coprod_R X \text{ and } |P| \tensor X = \coprod_{|P|} X
	\]
	in $\Rci \K$ and we form the corresponding reflexive coinserter $c$ in $\Rci \K$:
	\[
	\begin{tikzcd}
	{R \tensor X} \ar[r, shift left, "\pi_1 \tensor X"] \ar[r, swap, shift right, "\pi_0 \tensor X"] & {|P| \tensor X} \ar[r, "c"] \ar[d, swap, "{[f_p]}"] & C \ar[dl, "f"] \\
	& Y &
	\end{tikzcd}
	\]
	We prove that
	\[
	C = P \tensor X.
	\]
	Indeed, to give a morphism $f : C \to Y$ in $\Rci \K$ means to give morphisms $f_p : X \to Y$ for all $p \in |P|$ such that $[f_p] \comp (\pi_0 \tensor X) \leq [f_p] \comp (\pi_1 \tensor X)$.
	Equivalently: whenever $p_0 \leq p_1$ holds in $P$, then $f_{p_0} \leq f_{p_1}$ in $\Rci \K (X,Y)$.
	This is the same as specifying a monotone function from $P$ to $\Rci \K (X,Y)$, as claimed.
	
	\item We finally prove that for every e-finitely cocomplete category $\LL$ and every functor $F: \K\to \LL$ preserving finite coproducts there exists a unique e-finitely cocontinuous extension $F: \Rci \K\to \LL$.
	This concludes the proof by Remark~\ref{R:free}.
	
	By the universal property of $\Rci \K$ we have a unique extension $\ol{F}: \Rci \K\to \LL$ preserving reflexive coinserters.
	We now prove that it preserves finite coproducts.
	It follows that $\ol{F}$ preserves tensors $P \tensor \blank$ with $P$ finite: see Item~(4).
	Moreover, $\ol{F}$ preserves reflexive coequalizers of $f,g: X \to Y$: they are the coinserters of $[f,g], [g,f]: X + X \to Y$.
	Therefore $\ol{F}$ preserves finite colimits, thus by Corollary~\ref{C:e} $\ol{F}$ is e-finitely cocontinuous.
	
	The proof that $\ol{F}$ preserves, for every object $X$ of $\Rci \K$, all coproducts $X + Y$, is by structural induction.
	Let $\A$ be the full subcategory of all $X$ with $\ol{F}(X + Y) = \ol{F}X + \ol{F}Y$ holding for every $Y \in \Rci \K$.
	
	\begin{enumerate}

		\item [(5a)] $\A$ contains $\K$.
		Indeed, assuming $X \in \K$, we prove the desired property by structural induction again.
		Let $\A'$ be the full subcategory of all $Y$ with $\ol{F}(X + Y) = \ol{F} X + \ol{F} Y$.
		Since $F$ preserves finite coproducts, $\K\subseteq \A'$: for $Y \in \K$ we have $\ol{F} (X + Y) = F(X + Y)$ using Item~(2), and $\ol{F} X + \ol{F} Y = FX + FY = F(X + Y)$.
		And $\A'$ is closed under reflexive coinserters: given a coinserter~\eqref{coins} with $A, B \in \A'$ we construct $X + C = \wt{C}$ as in Item~(3b) and use that $\ol{F}$ preserves both reflexive coinserters and the coproducts $X + A$ and $X + B$ to conclude that $\ol{F} X + \ol{F} C = \ol{F} \wt{C}$.
		Thus $\A' = \Rci \K$.
		
		\item [(5b)] $\A$ is closed under reflexive coinserters.
		This follows from $\ol{F}$ preserving reflexive coinserters and the construction of finite coproducts in Item~(3b).

	\end{enumerate}

\end{enumerate}
\end{proof}

\begin{theorem}
\label{T:free}
Let $\K$ be a poset-enriched category with finite coproducts. Then
\[
\Sind \K= \Ind (\Rci \K).
\]
\end{theorem}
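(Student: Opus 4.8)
The plan is to verify the three conditions of Proposition~\ref{P:general} for the full embedding $E: \K \hookrightarrow \Ind(\Rci \K)$, taking $\Phi$ to be the class of all sifted weights. The structural input I will lean on throughout is Theorem~\ref{T:Rci}: it makes $\Rci \K$ e-finitely cocomplete, and since over $\Pos$ the notions e-finite and finite coincide, $\Rci \K$ in fact has all finite weighted colimits. The outer $\Ind$ then supplies filtered colimits, and the whole point is to organize these two layers so that Proposition~\ref{P:general} applies.

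Conditions (1) and (3) I expect to be routine. For (3), both reflexive coinserters (Theorem~\ref{T:rci}) and filtered colimits (Proposition~\ref{P:fil}) are sifted; since every object of $\Ind(\Rci \K)$ is a filtered colimit of objects of $\Rci \K$, each of which arises from $\K$ by iterating reflexive coinserters, the category $\Ind(\Rci \K)$ is visibly an iterated closure of $\K$ under sifted colimits. For (1), $\Ind(\Rci \K)$ carries filtered colimits by construction and inherits reflexive coinserters from $\Rci \K$: a reflexive pair in $\Ind(\Rci \K)$ is a filtered colimit of reflexive pairs in $\Rci \K$ (filtered colimits commute with the finite limits encoding reflexivity), and its coinserter is the corresponding filtered colimit of the coinserters formed in $\Rci \K$. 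Together with the finite cocompleteness of $\Rci \K$ this gives sifted-cocompleteness; alternatively one invokes the standard fact that the free filtered-colimit completion of a finitely cocomplete $\V$-category is cocomplete.

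The heart of the argument, and the step I expect to be the main obstacle, is condition (2): that $\Ind(\Rci \K)(X,\blank)$ is sifted-cocontinuous for every $X \in \ob \K$. Two partial facts are immediate. Since $X \in \Rci \K$, the representable $\Ind(\Rci \K)(X,\blank)$ preserves filtered colimits, by Proposition~\ref{P:general}(2) applied to $\Ind(\Rci \K)$ as the filtered-colimit completion of $\Rci \K$; and since $X \in \K$, the representable $\Rci \K(X,\blank)$ preserves reflexive coinserters, by Proposition~\ref{P:general}(2) applied to $\Rci \K$ as the reflexive-coinserter completion of $\K$. These two hom-posets agree on diagrams lying in $\Rci \K$, because $\Rci \K \hookrightarrow \Ind(\Rci \K)$ is a full embedding preserving reflexive coinserters. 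The remaining task is to upgrade ``preserves filtered colimits and reflexive coinserters'' to ``preserves all sifted colimits'' for this particular functor. The clean way to do this is to establish, \emph{directly and independently of the later} Theorem~\ref{T:preser}, that every sifted colimit in $\Ind(\Rci \K)$ is a filtered colimit of reflexive-coinserter colimits of diagrams in $\Rci \K$ — equivalently, that the saturation of the filtered weights together with the single reflexive-coinserter weight $W_0$ is exactly the (saturated) class of sifted weights. Granting this decomposition, condition (2) follows by combining the two partial facts, and Proposition~\ref{P:general} then delivers $\Sind \K = \Ind(\Rci \K)$. I expect the decomposition itself — performed purely at the level of weights or of presheaves, so as not to presuppose the functor-preservation statement it is meant to support — to be the genuinely delicate point, and the place where commutation of sifted weights with finite products, and hence the description in Corollary~\ref{C:sind} of $\Sind \K$ as the finite-product-preserving presheaves, must be brought to bear.
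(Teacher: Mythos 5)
Your overall strategy (verify the three conditions of Proposition~\ref{P:general} for $E: \K \hookrightarrow \Ind(\Rci \K)$ with $\Phi$ the sifted weights) is reasonable, and your treatment of conditions (1) and (3) is essentially fine. But there is a genuine gap exactly where you locate ``the heart of the argument'': condition (2) is reduced to the claim that every sifted colimit in $\Ind(\Rci \K)$ decomposes into filtered colimits of reflexive coinserters --- equivalently, that the saturation of the filtered weights together with $W_0$ is the saturated class of sifted weights --- and this claim is then simply granted rather than proved. That decomposition is not a routine verification: it is essentially the content of the theorem itself together with Theorem~\ref{T:preser} (which the paper derives \emph{from} Theorem~\ref{T:free}, so invoking it here would be circular, as you yourself note). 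Worse, in the form you state it (a statement about saturated classes of weights, making no reference to $\K$), it would yield $\Sind \K = \Ind(\Rci \K)$ for \emph{arbitrary} poset-enriched $\K$; but the theorem is only claimed under the hypothesis that $\K$ has finite coproducts, and the paper explicitly lists even the pullback variant (cf.\ Theorem~\ref{T:pb} in the ordinary case) as an open problem. So the unconditional weight-level decomposition is at best unknown, and your plan gives no indication of where the finite-coproduct hypothesis would enter.

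The paper sidesteps this entirely by arguing at the level of presheaves rather than weights. For small $\K$: by Theorem~\ref{T:Rci} the category $\Rci \K$ is (e-)finitely cocomplete, so $\Ind(\Rci \K)$ is the category of finite-limit-preserving presheaves on $\Rci \K$ (Remark~\ref{R:ind}); by the universal property of $\Rci \K$ as the \emph{free plus-conservative e-finite cocompletion}, such presheaves $H$ correspond bijectively to finite-coproduct-preserving functors $F: \K \to \Pos^\op$ (with $H^\op$ the e-finitely cocontinuous extension of $F$); and by Corollary~\ref{C:sind} these $F^\op$ are exactly the objects of $\Sind \K$. The finite-coproduct hypothesis is used twice, in Theorem~\ref{T:Rci} and in Corollary~\ref{C:sind}; the large case is then handled by exhausting $\K$ by essentially small full subcategories closed under finite coproducts. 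To salvage your approach you would need an actual proof of your decomposition claim for the specific category $\Ind(\Rci \K)$, which in effect amounts to redoing the paper's presheaf argument.
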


\begin{proof}
\phantom{phantom}
\begin{enumerate}
	
	\item Let $\K$ be small.
	By Theorem~\ref{T:Rci} the category $\Rci \K$ has finite colimits, thus, $\Ind (\Rci \K)$ is the category of all presheaves $H$ on $\Rci \K$ preserving finite limits (Remark~\ref{R:ind}).
	That is, all presheaves such that
	\[
	H^\op: \Rci \K \to \Pos^\op
	\]
	preserves finite colimits.
	Such presheaves are, using that theorem again, precisely the extensions of functors $F: \K\to \Pos^\op$ preserving finite coproducts.
	By Corollary~\ref{C:sind} this means that $F^\op: \K^\op \to \Pos$ lies in $\Sind \K$.
	We obtain an equivalence of categories $\Ind (\Rci \K)$ and $\Sind \K$ by assigning to $H$ the unique functor $F^\op$ such that $H^\op$ extends $F$.
	
	\item Let $\K$ be large.
	Form the collection $\K_i \subseteq \K$ ($i \in I$) of all essentially small full subcategories closed under finite coproducts in $\K$.
	Order $I$ by inclusion, then $\K_i$ is closed under finite coproducts in $\K_j$ for all $i \leq j$ in $I$.
	Theorem~\ref{T:Rci} implies that $\Rci \K_i$ is closed under finite colimits in $\Rci \K_j$.
	Since $\Ind (\Rci \K)$ is formed by small presheaves on $\Rci \K$ preserving finite limits (Remark~\ref{R:ind}]) and for each such presheaf $H: (\Rci \K)^\op \to \Pos$ the codomain restrictions to $(\Rci \K_i)^\op$ preserve finite limits, too, we conclude that $\Ind (\Rci \K)$ is the colimit of the diagram of all $\Ind (\Rci \K_i)$ indexed by the ordered class $I$.
	
	Analogously, $\Sind \K_i$ is closed under finite coproducts in $\Sind \K_j$ for all $i \leq j$ in $I$.
	Since $\Sind \K$ is formed by small presheaves preserving finite products (by Remark~\ref{R:ind} again), we conclude that $\Sind \K$ is the colimit of the diagram of all $\Sind \K_i$ for the ordered class $I$.
	Thus the theorem follows from~(1) above.
	
\end{enumerate}
\end{proof}

\begin{openproblem}
\phantom{phantom}
\begin{enumerate}
	
	\item Does the above theorem hold for poset-enriched categories with pullbacks? (Compare Theorem~\ref{T:pb}.)
	
	\item Does that theorem generalize to $\V$-categories for basic subcategories $\V$ of $\Pos$?
	
\end{enumerate}
\end{openproblem}

\begin{theorem}
\label{T:preser}
A poset-enriched functor between cocomplete categories preserves sifted colimits iff it preserves
\begin{enumerate}[label=(\alph*)]
	\item filtered colimits and
	\item reflexive coinserters.
\end{enumerate}
\end{theorem}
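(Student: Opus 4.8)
The plan is to treat the two implications separately, the forward one being immediate. Since filtered colimits are sifted by Proposition~\ref{P:fil} and reflexive coinserters are sifted by Theorem~\ref{T:rci}, any functor preserving all sifted colimits in particular preserves colimits of both these shapes. Everything below concerns the converse.

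So let $F\colon\K\to\LL$ preserve filtered colimits and reflexive coinserters, with $\K,\LL$ cocomplete, and let $D\colon\D\to\K$ be a diagram with sifted weight and colimit $C$. I would first pass to a small context. Choose an essentially small full subcategory $\K_0\subseteq\K$ closed under finite coproducts and containing the objects $Dd$, and let $J\colon\K_0\hookrightarrow\K$ be the inclusion, so that $D$ factors as $J\comp D_0$. As $\K$ is cocomplete it has sifted colimits, so the universal property of $\Sind\K_0$ produces a sifted-cocontinuous functor $\widehat J\colon\Sind\K_0\to\K$ extending $J$; likewise $F\comp J$ extends to a sifted-cocontinuous $\widehat{FJ}\colon\Sind\K_0\to\LL$. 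Writing $\Omega=\colim(Y\comp D_0)\in\Sind\K_0$ for the canonical presentation of the weight, we have $C=\widehat J(\Omega)$ and $\colim(FD)=\widehat{FJ}(\Omega)$, so the assertion $F(C)=\colim(FD)$ reduces to proving that the composite $G=F\comp\widehat J$ agrees with $\widehat{FJ}$ at $\Omega$; in fact I would establish $G\cong\widehat{FJ}$ outright.

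The two functors agree on representables, each sending $YK$ to $FK$. Moreover both preserve filtered colimits (for $\widehat{FJ}$ because filtered colimits are sifted; for $G$ because $\widehat J$ is sifted-cocontinuous and $F$ preserves filtered colimits by hypothesis) and both preserve reflexive coinserters (for $\widehat{FJ}$ by Theorem~\ref{T:rci}; for $G$ since $\widehat J$ preserves them and so does $F$). Now Theorem~\ref{T:free} gives $\Sind\K_0=\Ind(\Rci\K_0)$, and by the iterated-closure clause of Proposition~\ref{P:general} this exhibits $\Sind\K_0$ as the closure of $\K_0$ first under reflexive coinserters (yielding $\Rci\K_0$) and then under filtered colimits. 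Let $\kappa\colon\widehat{FJ}\Rightarrow G$ be the canonical comparison arising from the left-Kan-extension property of $\widehat{FJ}$ along $Y$, and let $\mathcal C\subseteq\Sind\K_0$ be the full subcategory of objects at which $\kappa$ is invertible. Then $\mathcal C$ contains $\K_0$ and, because $G$ and $\widehat{FJ}$ both preserve reflexive coinserters and filtered colimits and $\kappa$ is compatible with these constructions, $\mathcal C$ is closed under both. By the closure description it follows that $\mathcal C=\Sind\K_0$, so $\kappa$ is a natural isomorphism and in particular $F(C)\cong\colim(FD)$, as required.

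The step I expect to demand the most care is resisting the natural but false move of trying to make $F$ preserve finite coproducts. Neither $G$ nor $\widehat{FJ}$ preserves finite coproducts --- on a coproduct $K_1+K_2$ of representables each returns $F(K_1+K_2)$ rather than $FK_1+FK_2$ --- and finite coproducts are not sifted colimits (a sifted category is connected), so there is no prospect of handling them as such. The resolution is to route the entire argument through reflexive coinserters: by Theorem~\ref{T:Rci} the finite coproducts of $\Rci\K_0$ are themselves generated from $\K_0$ by reflexive coinserters, so an agreement subcategory closed under reflexive coinserters already contains them and no independent treatment of coproducts is needed. The residual technical points --- that $\kappa$ is invertible on representables, that it is compatible with the coinserter and filtered-colimit cocones so that $\mathcal C$ is genuinely closed, and that the closure clause of Proposition~\ref{P:general} may be applied inside $\Sind\K_0$ --- are routine once $\kappa$ is set up correctly, and I would discharge them after the main comparison is in place.
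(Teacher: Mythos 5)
Your proposal is correct and follows essentially the same route as the paper: pass to an essentially small full subcategory closed under finite coproducts, use $\Sind\K_0=\Ind(\Rci\K_0)$ from Theorem~\ref{T:free}, and show that $F\comp\widehat J$ agrees with the sifted-cocontinuous extension of $F\comp J$ by treating filtered colimits and reflexive coinserters separately (the paper also needs, as you flag among your residual points, that the embedding $\Rci\K_0\hookrightarrow\Ind(\Rci\K_0)$ preserves reflexive coinserters, which it gets from Kelly--Schmitt). The only cosmetic difference is that the paper concludes equality of the two extensions directly from the uniqueness clauses of the universal properties of $\Rci$ and $\Ind$, where you run a structural induction with an explicit comparison transformation $\kappa$.
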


\begin{proof}
Let $\K$ and $\LL$ be cocomplete poset-enriched categories.
Given an enriched functor $F : \K\to \LL$ preserving filtered colimits and reflexive coinserters, we prove that it preserves sifted colimits.
\begin{enumerate}
	
	\item Every small full subcategory $C: \C\hookrightarrow \K$ closed under finite coproducts has the following property.
	Denote by $(\blank)^*$ the extension of functors from $\C$ to $\Sind \C$ preserving sifted colimits, then the triangle below commutes:
	\[
	\begin{tikzcd}
	& {\Sind \C} \ar[dl, swap, "C^*"] \ar[d, "(F \comp C)^*"] \\
	{\K} \ar[r, swap, "F"] & {\LL}
	\end{tikzcd}
	\]
	To verify this, observe that both $F \comp C^*$ and $(F \comp C)^*$ preserve filtered colimits.
	Thus, to prove they coincide, it is sufficient to verify the equality
	\[
	F \comp C^* \comp I = (F \comp C)^* \comp I : \Rci \C\to \LL
	\]
	for the embedding $I: \Rci \C \hookrightarrow \Ind (\Rci \C) = \Sind \C$ (Theorem~\ref{T:free}).
	The functor $I$ preserves finite weighted colimits (\cite{kelly+schmitt}, Proposition~5.6 applied to $\Phi = $finite limits).
	Therefore both sides of the last equation preserve reflexive coinserters.
	Thus, the equation holds because both functors are extensions of $F \comp C: \C\to \LL$.
	
	\item We are ready to prove that $F$ preserves $\Colim{W}{D}$ for every small diagram $D : \D \to \K$ and every sifted weight $W : \D^\op \to \V$.
	Let $C: \C\hookrightarrow \K$ denote the full subcategory which is the closure of $D[\D]$ under finite coproducts.
	Since $\C$ is essentially small, we can apply~(1) to it.
	We have a factorization $D = C \comp D'$, and obtain a commutatitve diagram as follows:
	\[
	\begin{tikzcd}
	& {\C} \ar[d, hook, swap, "C"] \ar[r, hook, "E"] & {\Sind \C} \ar[dl, swap, "C^*"] \ar[d, "(FC)^*"] \\
	{\D} \ar[ur, "D'"] \ar[r, swap, "D"] & {\K} \ar[r, swap, "F"] & {\LL}
	\end{tikzcd}
	\]
	We thus get the following canonical isomorphisms
	\begin{align*}
	F (\Colim{W}{D}) & = F (\Colim{W}{C^* \comp E \comp D'}) \\
	& \cong F \comp C^* (\Colim{W}{E \comp D'}) && \text{$W$ sifted} \\
	& = (F \comp C)^* (\Colim{W}{E \comp D'}) && \text{by (1)} \\
	& \cong \Colim{W}{((F\comp C)^* \comp E \comp D')} && \text{$W$ sifted} \\
	& = \Colim{W}{(F \comp D)}.
	\end{align*}
	
\end{enumerate}
\end{proof}

\section{Strongly Finitary Endofunctors}

The concept of strong finitarity was introduced by Kelly and Lack~\cite{kelly+lack:strongly-finitary}, see below.
For all the categories $\V$ we consider in our paper an endofunctor of $\V$ is strongly finitary iff it preserves sifted colimits.
In the subsequent sections we prove that strongly finitary monads on $\CPO$ or $\DCPO$ bijectively correspond to varieties of continuous (or $\Delta$-continuous) algebras.

The assumption~\ref{ASS} are still assumed throughout this section.

\begin{definition}[\cite{kelly+lack:strongly-finitary}]
An endofunctor $T$ of $\V$ is \emph{strongly finitary} if it is the left Kan extension of its restriction $T \comp K$ to finite sets:
\[
T = \Lan{K}{(T \comp K)}
\]
\end{definition}

\begin{remark}
\label{R:setfcomp}
\phantom{phantom}
\begin{enumerate}

	\item Recall that $\Lan{K}{}: [\Set_\fin, \V] \to [\V,\V]$ is the left adjoint of the functor $(\blank) \comp K : [\V, \V] \to [\Set_\fin, \V]$.
	
	\item In case $\V$ is the free completion of $\Set_\fin$ under sifted colimits (via $K: \Set_\fin \to \V$) we simply write
	\[
	\V = \Sind \Set_\fin.
	\]
	Using Proposition~\ref{P:general} this is equivalent to $\V$ being the iterated closure of $K[\Set_\fin]$ under sifted colimits.
	Indeed, (1) in that proposition is our assumption, and (2) is automatic since sifted colimits commute with finite powers.
	And $\V(KX, \blank) \cong (\blank)^n$ for every set $X$ of $n$ elements.
	
\end{enumerate}
\end{remark}

\begin{proposition}
\label{P:endosf}
Suppose $\V = \Sind \Set_\fin$.
Then an endofunctor of $\V$ is strongly finitary iff it preserves sifted colimits.
\end{proposition}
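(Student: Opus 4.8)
The plan is to read the statement entirely through the universal property of $\V = \Sind \Set_\fin$. Since $\V$ is by hypothesis the free completion of $\Set_\fin$ under sifted colimits along $K$, the defining property of a free completion (applied with $\LL = \V$, which is sifted-cocomplete by Assumption~\ref{ASS}) gives that precomposition with $K$ is an equivalence
\[
(\blank) \comp K : \Phi\text{-}\Cocts(\V, \V) \xrightarrow{\ \simeq\ } [\Set_\fin, \V],
\]
where $\Phi$ denotes the class of all sifted weights. First I would record that the hypotheses of Proposition~\ref{P:general} really do hold here, exactly as in Remark~\ref{R:setfcomp}(2): condition (1) is the Assumption, and condition (2) is automatic because $\V(KX, \blank) \cong (\blank)^n$ for a set $X$ of $n$ elements and sifted colimits commute with finite products by definition of siftedness.

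The key step, and the one I expect to be the main obstacle, is to identify the pseudo-inverse of this equivalence with the left Kan extension functor $\Lan{K}{}$ of Remark~\ref{R:setfcomp}(1). Concretely, I would show that for every $S : \Set_\fin \to \V$ the functor $\Lan{K}{S}$ is itself sifted-cocontinuous and that the unit $S \to (\Lan{K}{S}) \comp K$ is an isomorphism. The latter is immediate from $K$ being a full embedding (Assumption~\ref{ASS}). The former — that left Kan extension along the free-completion embedding $K$ lands in $\Phi\text{-}\Cocts(\V,\V)$ — is the substantive part: it is the standard fact that the $\Phi$-cocontinuous extension produced by the universal property is computed as $\Lan{K}{}$, and it rests on $K$ being sifted-dense, i.e.\ on $\V$ being the iterated sifted-colimit closure of $K[\Set_\fin]$. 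Granting this, $(\blank)\comp K$ and $\Lan{K}{}$ are mutually pseudo-inverse on $\Phi\text{-}\Cocts(\V,\V)$; in particular two sifted-cocontinuous endofunctors agreeing on $K[\Set_\fin]$ are isomorphic, and for sifted-cocontinuous $T$ the counit $\Lan{K}{(T \comp K)} \to T$ is an isomorphism.

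Both implications then fall out. For \emph{strongly finitary $\Rightarrow$ preserves sifted colimits}: if $T = \Lan{K}{(T \comp K)}$, then $T$ is of the form $\Lan{K}{S}$ with $S = T \comp K$, hence sifted-cocontinuous by the key step. For \emph{preserves sifted colimits $\Rightarrow$ strongly finitary}: if $T$ preserves sifted colimits then $T \in \Phi\text{-}\Cocts(\V,\V)$, so the counit $\Lan{K}{(T\comp K)} \to T$ is an isomorphism, which is exactly the assertion that $T$ is the left Kan extension of its restriction to finite sets. Thus the whole proposition is a repackaging of the free-completion universal property, the only genuine content being the identification of the $\Phi$-cocontinuous extension with the Kan extension $\Lan{K}{}$.
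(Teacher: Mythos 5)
Your argument is correct, but it takes a genuinely different route from the paper's, which is a two-sentence affair: it invokes Kelly's Theorem~5.29 (the condition $T = \Lan{K}{(T \comp K)}$ is equivalent to $T$ preserving $K$-absolute colimits) and then observes that $K$-absolute weights coincide with sifted ones because $\V(KX,\blank) \cong (\blank)^n$. You instead reconstruct the relevant part of that theorem from the universal property of $\Sind \Set_\fin$, exhibiting $\Lan{K}{}$ as the pseudo-inverse of $(\blank) \comp K$ on $\Phi\text{-}\Cocts(\V,\V)$; this is sound and more self-contained in spirit, and both proofs ultimately hinge on exactly the same isomorphism $\V(KX,\blank) \cong (\blank)^n$. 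One correction of attribution: the step you single out as substantive --- that $\Lan{K}{S}$ lands in $\Phi\text{-}\Cocts(\V,\V)$ --- does not rest on sifted-density of $K$ as you claim, but on condition~(2) of Proposition~\ref{P:general}: from the pointwise formula $\Lan{K}{S}(X) = \Colim{\V(K\blank,X)}{S}$ and the fact that each $\V(KY,\blank) \cong (\blank)^n$ preserves sifted colimits, the nerve $X \mapsto \V(K\blank,X)$ preserves sifted colimits, and weighted colimits are cocontinuous in the weight. The iterated-closure (density) condition is what you need for the other composite, i.e.\ to get $\Lan{K}{(T \comp K)} \cong T$ for sifted-cocontinuous $T$, which you correctly extract from $(\blank) \comp K$ being an equivalence admitting a one-sided pseudo-inverse. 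So the proposal works, at the cost of leaving its key half as an appeal to a ``standard fact'' that is essentially the Kelly result the paper cites outright.
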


\begin{proof}
According to~\cite{kelly:book}, Theorem~5.29 the condition $T = \Lan{K}{(T \comp K)}$ is equivalent to $T$ preserving $K$-absolute colimits.
This means colimits weighted by such weights $W$ that $\V(KX,\blank)$ preserves colimits weighted by $W$ for each $X \in \Set_\fin$.
If $X$ is an $n$-element set, then $\V(KX,\blank)$ is naturally isomorphic to $(\blank)^n$.
Thus
\begin{center}
$K$-absolute = sifted.
\end{center}
(Indeed, for $n > 0$ sifted colimits commute with $n$-th powers, for $n=0$ the functor $\V(KX,\blank)$ is constant with value $0$, the initial object.)

\end{proof}

\begin{examples}
\label{E:strong}
\phantom{phantom}
\begin{enumerate}

	\item For endofunctors of $\V = \Set$ finitary and strongly finitary are equivalent conditions.
	Indeed, $\Set = \Ind \Set_\fin$.
	
	\item An enriched endofunctor of $\Pos$ is strongly finitary iff it is finitary and preserves reflexive coinserters.
	This follows from Theorem~\ref{T:preser} and Proposition~\ref{P:endosf}, using Remark~\ref{R:setfcomp}~(2).
	Indeed, we have
	\[
	\Pos = \Sind \Set_\fin,
	\]
	as $\Pos$ is the closure of finite posets under filtered colimits, and finite posets form, by Remark~\ref{R:id}, the closure of $K[\Set_\fin]$ under reflexive coinserters.
	
	\item As mentioned in Example~\ref{E:Bourke}, an endofunctor of $\Cat$ is strongly finitary iff it is finitary and preserves codescent objects of strict reflexive data.
	
\end{enumerate}
\end{examples}

\begin{proposition}
\label{P:CPO}
$\CPO = \Sind \Set_\fin$, and an enriched endofunctor of $\CPO$ is strongly finitary iff it is finitary and preserves reflexive coinserters.
\end{proposition}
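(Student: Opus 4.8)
The plan is to follow the pattern of Example~\ref{E:strong}(2) for $\Pos$. The second assertion is then essentially formal: once we know $\CPO = \Sind \Set_\fin$, Proposition~\ref{P:endosf} identifies the strongly finitary endofunctors with those preserving sifted colimits, and Theorem~\ref{T:preser}---which applies since filtered colimits and reflexive coinserters are conical, respectively weighted by objects of $\Pos_\fin$, and hence computed identically in the $\CPO$- and $\Pos$-enriched senses---reduces preservation of sifted colimits to preservation of filtered colimits (i.e.\ finitarity) together with reflexive coinserters. So the whole content lies in proving $\CPO = \Sind \Set_\fin$, which I would establish by verifying the three conditions of Proposition~\ref{P:general} for the embedding $K: \Set_\fin \hookrightarrow \CPO$. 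Conditions (1) and (2) are immediate: $\CPO$ is cocomplete (Appendix~A), and for an $n$-element set $X$ the functor $\CPO(KX,\blank)$ is naturally isomorphic to $(\blank)^n$, which preserves sifted colimits because these commute with finite products by definition.

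The heart of the matter is condition (3): that $\CPO$ is the iterated closure of $K[\Set_\fin]$ under sifted colimits. Since in any basic subcategory of $\Pos$ sifted colimits subsume filtered colimits (Proposition~\ref{P:fil}) and reflexive coinserters (Theorem~\ref{T:rci}), it suffices to reach every cpo from finite discrete posets using only these two constructions. I would proceed in three steps. First, every finite poset $P$ is the reflexive coinserter of the two finite sets $R \rightrightarrows |P|$ of Remark~\ref{R:id}; this coinserter, computed in $\Pos$, is also a coinserter in $\CPO$, since all objects involved are finite and hence continuity of the comparison maps into any cpo is automatic. Thus every finite poset lies in the closure.

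Second, I would show that the free cpo $LP$ on a poset $P$---where $L \dashv U$ for the forgetful functor $U: \CPO \to \Pos$---is the conical filtered colimit, taken in $\CPO$, of the finite subposets of $P$ ordered by inclusion. Indeed, a compatible cocone from these finite subposets into a cpo $D$ is exactly a monotone map $P \to UD$, so the colimit has the universal property $\CPO(\blank,D) \cong \Pos(P,UD)$ defining $LP$. This is precisely the phenomenon of Example~\ref{E:N}, where the colimit of the finite chains is the free cpo $\Nat^\top$ on $\Nat$, the filtered colimit ``adding'' the missing joins. Hence every free cpo lies in the closure, being a filtered colimit of finite posets. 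Third, using that $\CPO$ is monadic over $\Pos$ via $L \dashv U$ (Appendix~A), every cpo $X$ is the reflexive coequalizer of its canonical presentation $LULUX \rightrightarrows LUX \to X$ by free cpos; and a reflexive coequalizer of $u,v$ is the reflexive coinserter of $[u,v],[v,u]$ on the doubled domain, whose domain and codomain are again free cpos (left adjoints preserve coproducts). Therefore $X$ lies in the closure, completing the verification of condition (3).

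The main obstacle I anticipate is the second step: correctly identifying the $\CPO$-filtered colimit of the finite subposets of $P$ as the free cpo rather than as $P$ itself. The subtlety is exactly the one visible in Example~\ref{E:N}---filtered colimits in $\CPO$ do not commute with the forgetful functor to $\Pos$ and create new joins---so one must argue via the universal property rather than by computing the colimit as a union. Establishing the monadicity of $\CPO$ over $\Pos$, and hence the canonical reflexive-coequalizer presentation of an arbitrary cpo, is the other point requiring care, though it is a standard fact of domain theory; everything else is bookkeeping with the closure properties already developed.
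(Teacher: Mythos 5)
Your verification of conditions (1)--(3) of Proposition~\ref{P:general} is the right frame, and your route to condition (3) is genuinely different from the paper's and essentially works. The paper argues concretely: it builds $\Nat^\top$ and its finite copowers $r \tens \Nat^\top$ as filtered colimits of finite posets, calls a cpo \emph{basic} if it is a reflexive coinserter of a pair $r' \tens \Nat^\top \rightrightarrows r \tens \Nat^\top$, and then shows by hand that every cpo is the directed colimit of its basic sub-cpos. You replace this with the conceptual decomposition ``free cpos are filtered colimits of finite posets; every cpo is a reflexive coequalizer (hence coinserter) of free ones.'' Your identification of $LP$ with the conical filtered colimit of the finite subposets of $P$ via the universal property is correct and is the nicest part of the argument. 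The price is the monadicity of $U : \CPO \to \Pos$: Appendix~A gives you only the left adjoint $L$ (reflectivity), not monadicity, so the canonical presentation $LULUX \rightrightarrows LUX \to X$ being a coequalizer is an extra lemma you must supply, not a citation. It is true and not hard --- $U$ creates coequalizers of $U$-split pairs: if $e$, $s$, $t$ split the pair $Uf$, $Ug$ in $\Pos$, then the retract $C$ of $UB$ has $\omega$-joins, and $e$ is continuous because $\bigsqcup e(b_n) = e\bigl(\bigsqcup se(b_n)\bigr) = e\bigl(g(\bigsqcup t(b_n))\bigr) = e\bigl(f(\bigsqcup t(b_n))\bigr) = e(\bigsqcup b_n)$, using $se = gt$, $ft = \id$ and continuity of $f$ and $g$ --- but as written this step is a gap.

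The other genuine gap is your reduction of the second assertion. Theorem~\ref{T:preser} is a statement about $\Pos$-enriched functors and $\Pos$-sifted weights; for the $\CPO$-enrichment the sifted weights are $\CPO$-valued and form a different class, so the theorem does not apply merely because filtered colimits and coinserters are computed the same way in both enrichments. Fortunately your closure result already supports the correct argument, which is the paper's: once $\CPO$ is the free completion of $\Set_\fin$ under $\Phi$-colimits for $\Phi$ consisting of the filtered weights together with the coinserter weight $W_0$, any endofunctor preserving filtered colimits and reflexive coinserters is the unique $\Phi$-cocontinuous extension of its restriction to $\Set_\fin$; the strongly finitary extension is also $\Phi$-cocontinuous (since every weight in $\Phi$ is sifted by Proposition~\ref{P:fil} and Theorem~\ref{T:rci}), so the two coincide. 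The converse direction is Proposition~\ref{P:endosf} combined with the same two results.
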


\begin{proof}
Recall the weight $W_0$ for reflexive coinserters (Definition~\ref{D:coins}).
We are going to prove that $\CPO$ is the closure of $\Set_\fin$ (finite discrete posets) under filtered colimits and reflexive coinserters.
Then $\CPO = \Sind \Set_\fin$ follows from Proposition~\ref{P:general} applied to
\[
\Phi = \text{filtered weights plus } W_0,
\]
using that $\CPO$ is cocomplete. Indeed, once we know that $\CPO$ is the free completion of $\Phi$-colimits, it is a free completion under sifted ones.
Consequently, an enriched endofunctor $F$ of $\CPO$ preserving filtered colimits and reflexive coinserters is the unique $\Phi$-cocontinuous extension of $F \comp K$.
The unique strongly finitary extension of $F \comp K$ to $\Sind \Set_\fin$ is also $\Phi$-cocontinuous, thus, it is the functor $F$.

Let us denote by $\C$ the closure of $\Set_\fin$ under filtered colimits and reflexive coinserters in $\CPO$.
We prove $\C = \CPO$ in several steps.
\begin{enumerate}

	\item $\C$ contains all finite posets by Remark~\ref{R:id}.
	
	\item $\C$ contains the cpo $\Nat^\top$ (Example~\ref{E:N}).
	Analogously, the copower $r \tens \Nat^\top$ ($r < \omega$) lies in $\C$: it is the colimit of the $\omega$-chain of $r \tens C_k$ ($k < \omega$), the coproduct of $r$ copies of $C_k$, with inclusions as connecting maps.
	
	\item Basic cpos. Let us call a cpo $P$ \emph{basic} if we can obtain it as a reflexive coinserter of the following form
	\[
	\begin{tikzcd}
		{r' \tens \Nat^\top} \ar[r, bend left, "f_1"] \ar[r, bend right, swap, "f_0"] & {r \tens \Nat^\top} \ar[r, "p"] & P
	\end{tikzcd}
	\]
	for $r,r' \in \Nat$.
	We also call such coinserters \emph{basic}.
	$\C$ is of course closed under basic coinserters.
	Observe that a coproduct of two basic coinserters is basic, too.
	
	\item To finish the proof we verify that every cpo $X$ is a directed colimit of basic ones.
	Denote by $D$ the directed diagram of all sub-cpos $P \subseteq X$ that are basic (and all inclusion morphisms between them).
	The fact that $D$ is directed follows from the observation above.
	The embeddings $i_P : P \hookrightarrow X$ of all basic sub-cpos form a cocone of $D$.
	We prove that every other cocone $s_P : P \to S$ of $D$ uniquely factorizes through $(i_P)$.
	
	First observe that $D$ contains $\{ x \}$ for every $x \in X$: consider the basic coinserter
	\[
	\begin{tikzcd}
		{2 \tens \Nat^\top } \ar[r, bend left, "f_1"above] \ar[r, bend right, swap, "f_0"] & {\Nat^\top} \ar[r, "p"] & {\{ x \}}
	\end{tikzcd}
	\]
	where $f_0$ and $f_1$ are $\id$ on the first copy of $\Nat^\top$, and on the second one $f_0$ is $\id$ and $f_1$ is constant with value $0$.
	Next observe that given $P \in D$ containing $\{ x \}$, it follows that $s_P(x) = s_{\{x\}}(x)$.
	(In other words, the value $s_P(x)$ is independent of $P$.)
	Indeed, the embedding $\{ x \} \hookrightarrow P$ is a connecting morphism of $D$, thus for the cocone $(s_P)$ we see that $s_{\{x\}}$ is a restriction of $s_P$.
	We can thus define a map
	\[
	s : X \to S \text{ by } s(x) = s_P(x)
	\]
	for any $P \in D$ containing $x$.
	This map is monotone.
	Indeed, given $x \sqleq y$ in $X$, then $D$ contains $\{ x, y \}$.
	To see this, consider the basic coinserter
	\[
	\begin{tikzcd}
		{2 \tens \Nat^\top} \ar[r, bend left, "f_1"above] \ar[r, bend right, swap, "f_0"] & {\Nat^\top} \ar[r, "p"] & {\{ x, y \}}
	\end{tikzcd}
	\]
	where $f_0$ and $f_1$ are $\id$ on the first copy of $\Nat^\top$ and on the second one $f_0$ is $\id$ and $f_1$ is constant with value $1$ except for $f_1(0) = 0$.
	Then $p(0) = x$ and else $p$ is constant with value $y$.
	Since $s_{\{x,y\}}$ is monotone, we get
	\[
	s(x) = s_{\{x,y\}}(x) \sqleq s_{\{x,y\}}(y) = s(y).
	\]
	Analogously, $s$ is continuous: let $x = \bigsqcup_{k \in \Nat} x_k$ be a join of a strictly increasing $\omega$-chain in $X$.
	Consider the basic coinserter
	\[
	\begin{tikzcd}
		{\Nat^\top} \ar[r, bend left, "\id"] \ar[r, bend right, swap, "\id"] & {\Nat^\top} \ar[r, "p"] & {\{ x_k \}_{k \in \Nat} \cup \{ x \}}
	\end{tikzcd}
	\]
	where $p(k) = x_k$ and $p(\top) = x$.
	
	By the definition of $s$ we have $s \comp i_P = s_P$ for very $P \in D$.
	The map $s$ is obviously unique with this property.
	
\end{enumerate}
\end{proof}

\begin{proposition}
\label{P:DCPO}
$\DCPO = \Sind \Set_\fin$, and an enriched endofunctor of $\DCPO$ is strongly finitary iff it is finitary and preserves reflexive coinserters.
\end{proposition}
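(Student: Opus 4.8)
The plan is to follow the pattern of Proposition~\ref{P:CPO}: once we prove that $\DCPO$ is the closure $\C$ of $\Set_\fin$ under filtered colimits and reflexive coinserters, the equality $\DCPO=\Sind\Set_\fin$ follows from Proposition~\ref{P:general} applied to $\Phi=$ filtered weights plus $W_0$ (using that $\DCPO$ is cocomplete), and the characterization ``strongly finitary iff finitary and preserving reflexive coinserters'' follows verbatim as in the proof of Proposition~\ref{P:CPO}, via Proposition~\ref{P:endosf}. So the whole content is the identity $\C=\DCPO$. The essential difference from the $\CPO$ case is that the single countable building block $\Nat^\top$ must be replaced by free dcpos on arbitrary directed posets, since directed joins in a dcpo need not be reachable by $\omega$-chains.

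I would organize the proof of $\C=\DCPO$ around the fact that $\DCPO$ is monadic over $\Pos$, the monad being the ideal-completion (free-dcpo) functor $\mathrm{Idl}$. First I record that $\C$ contains every finite poset: by Remark~\ref{R:id} a finite poset is a reflexive coinserter of finite discrete posets, the diagonal $x\mapsto(x,x)$ being a common splitting of $\pi_0,\pi_1$. The key new ingredient is then that the free dcpo $\mathrm{Idl}(P)$ on \emph{any} poset $P$ already lies in $\C$. Indeed $\mathrm{Idl}$ is a left adjoint, hence preserves the filtered colimit $P=\colim F$ over the finite sub-posets $F\subseteq P$; and $\mathrm{Idl}(F)\cong F$ for finite $F$, since directed subsets of a finite poset have a maximum. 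Thus $\mathrm{Idl}(P)$ is a filtered colimit in $\DCPO$ of finite posets, and so lies in $\C$.

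Next I invoke the canonical monadic presentation. For every dcpo $X$, with underlying poset $UX$ (where $U\colon\DCPO\to\Pos$ is the forgetful functor) and structure map $a\colon\mathrm{Idl}(UX)\to X$ assigning to an ideal its join, the pair
\[
\mathrm{Idl}(\mathrm{Idl}(UX)) \rightrightarrows \mathrm{Idl}(UX) \xrightarrow{\ a\ } X
\]
given by the multiplication $\mu$ and by $\mathrm{Idl}(a)$ is a reflexive coequalizer, reflexive via the unit $\mathrm{Idl}(\eta)$. Both objects are free dcpos, hence in $\C$ by the previous paragraph; moreover a binary coproduct of free dcpos is again free, since $\mathrm{Idl}(A)+\mathrm{Idl}(B)=\mathrm{Idl}(A+B)$. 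Rewriting this reflexive coequalizer as the reflexive coinserter of $[u,v],[v,u]\colon A+A\to B$ (as in the proof following Theorem~\ref{T:rci}), with $A=\mathrm{Idl}(\mathrm{Idl}(UX))$ and $B=\mathrm{Idl}(UX)$, the object $A+A$ is again a free dcpo and so lies in $\C$. Since $\C$ is closed under reflexive coinserters, we conclude $X\in\C$, whence $\C=\DCPO$.

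The main obstacle is precisely the point that distinguishes $\DCPO$ from $\CPO$: establishing that free dcpos on \emph{arbitrary} (possibly uncountable) posets lie in $\C$, i.e.\ that the family $\{\mathrm{Idl}(P)\}$ takes over the role of $\Nat^\top$. If one instead prefers to imitate Proposition~\ref{P:CPO} literally, writing every dcpo as a directed colimit of ``basic'' sub-dcpos and constructing the comparison map $s$, the delicate step becomes the continuity of $s$: one must check that each directed join $x=\bigsqcup A$ in $X$ is captured inside a basic sub-dcpo arising as a reflexive-coinserter quotient of $\mathrm{Idl}(A)$, and that these sub-dcpos form a directed diagram. I expect the monadicity argument above to be the cleaner route, since it absorbs the uncountable directed joins into the single clause ``free dcpos lie in $\C$'' and thereby sidesteps the awkward transfinite bookkeeping that a direct imitation of the $\CPO$ continuity argument would require.
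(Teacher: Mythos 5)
Your proof takes a genuinely different route from the paper's, and as far as I can see it is essentially correct. The paper proves $\C=\DCPO$ by first constructing, via transfinite induction, all linearly ordered dcpos $\alpha+1$ as filtered colimits, then their finite coproducts, then ``basic'' dcpos as reflexive-coinserter quotients of these, and finally exhibiting every dcpo as a directed colimit of its basic sub-dcpos, with a hands-on verification (using the reduction of directed joins to joins of chains) that the comparison map is $\Delta$-continuous. You instead absorb all of the transfinite bookkeeping into two clean facts: free dcpos $\mathrm{Idl}(P)$ lie in $\C$ because $\mathrm{Idl}$ is a left adjoint and $P$ is a filtered colimit of its finite sub-posets (each fixed by $\mathrm{Idl}$), and every dcpo is a reflexive coequalizer of free ones by the canonical presentation. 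This is shorter and avoids ordinals altogether, at the price of importing one fact that is not in the paper: that $U\colon\DCPO\to\Pos$ is monadic (equivalently, that the canonical fork is a coequalizer in $\DCPO$). That fact is true and standard --- it follows from Beck's theorem, since $U$ creates coequalizers of $U$-split pairs: the splitting $s$ of the coequalizer $e$ transports directed joins via $\bigsqcup D := e(\bigsqcup s[D])$ --- but you must either prove it or cite it, as the paper only records that $\DCPO$ is reflective in $\Pos$. A second small point to make explicit: the canonical presentation yields an \emph{ordinary} coequalizer, whereas the translation ``coinserter of $[u,v],[v,u]$ equals coequalizer of $u,v$'' used in the paper concerns the \emph{conical} (enriched) coequalizer; these coincide here because the structure map $\epsilon_X\colon\mathrm{Idl}(UX)\to X$ is surjective (split epi in $\Pos$ by the unit), so $\DCPO(\epsilon_X,D)$ is automatically order-reflecting. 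With those two points supplied your argument is complete, and the remaining claims ($\DCPO=\Sind\Set_\fin$ and the characterization of strongly finitary endofunctors) do follow exactly as in Proposition~\ref{P:CPO}.
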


\begin{proof}
Analogously to the proof of Proposition~\ref{P:CPO}, we need to prove that
\[
\C= \DCPO,
\]
where $\C$ is the closure of finite discrete posets under filtered colimits and reflexive coinserters.

\begin{enumerate}

	\item For every ordinal $\alpha$ consider the linearly ordered dcpo $\alpha + 1$ of all ordinals smaller or equal to $\alpha$.
	We prove that if $\alpha$ is infinite, then $\alpha+1$ is a colimit of a chain of dcpos $\beta + 1$ lying in $\C$.
	We proceed by transfinite induction on $\alpha$.
	\begin{description}

		\item[First step] $\alpha = \omega$.
		Since $\alpha + 1 \cong \Nat^\top$, see Example~\ref{E:N}.
		
		\item[Isolated step] Suppose $\alpha + 1$ is a colimit of some $\gamma$-chain $D$ having objects $Di = \beta_i + 1$ for $i < \gamma$.
		We observe that $\alpha + 2$ is a colimit of the following $\gamma$-chain $D'$: for each $i < \gamma$ put
		\[
		D'i = \beta_i + 2 \text{ (having top element $\beta_i + 1$).}
		\]
		The connecting maps $d_{ij} : Di \to Dj$ of $D$ are extended to the connecting maps $d_{ij}' : D'i \to D'j$ by preerving the top elements: $d_{ij}' (\beta_i + 1) = \beta_j + 1$.
		Then the colimit of $D'$ is obtained from $\alpha + 1 = \colim D$ by adding a new top element.
		That is, $\alpha + 2 = \colim D'$.
		
		\item[Limit step] Given a limit ordinal $\alpha$ with $\beta + 1 \in \C$ for all $\beta < \alpha$, then $\alpha + 1 \in \C$ because the colimit of the $\alpha$-chain of all $\beta+ 1$ (and inclusion maps as the connecting maps) is $\alpha + 1$.
		
	\end{description}
	
	\item We next prove that $\C$ contains all finite coproducts of the above linearly ordered cpos $C_\alpha = \alpha + 1$ with $\alpha$ an infinite ordinal.
	We provide the detailed proof for binary coproducts
	\[
	C_\alpha + C_\beta.
	\]
	Without loss of generality assume $\alpha \leq \beta$.
	We prove by transfinite induction on $\beta$ that $C_\alpha + C_\beta$ is a colimit of a chain of dcpos $C_\alpha + C_\delta$ for ordinals $\delta < \beta$.
	
	\begin{description}
		
		\item[Initial step] $\beta = \omega$.
		Thus $\alpha = \omega$, too, and $C_\alpha = C_\beta \cong \Nat^\top$ (Example~\ref{E:N}).
		We have $\Nat^\top + \Nat^\top = \colim_{k < \omega} (C_k + C_k)$ in $\DCPO$.
		
		\item[Isolated step] Suppose that $C_\alpha + C_\beta$ is a colimit of some $\gamma$-chain $D$ having objects $D_i = C_\alpha + C_{\beta_i}$ for $i < \gamma$.
		Form the $\gamma$-chain $D'$ with objects $C_\alpha + C_{\beta_i + 1}$ and with connecting morphisms extending those of $D$ by preserving the top element $\beta_i + 1$ of $C_{\beta_i + 1}$.
		Then $C_\alpha + C_{\beta+ 1} = \colim D'$ in $\DCPO$.
		
		\item[Limit step] Let $\beta$ be a limit ordinal such that $\C$ contains $C_\alpha + C_\delta$ for all infinite $\delta < \beta$.
		These coproducts form a $\delta$-chain (with connecting maps given by the inclusion maps) having the colimit $C_\alpha + C_\beta$.
		
	\end{description}
	
	\item Basic dcpos.
	Let $\C_0 \subseteq \C$ denote the full subcategory of $\DCPO$ which is the closure of the class $\{ C_\alpha \mid \alpha \text{ an infinite ordinal} \}$ under finite coproducts.
	A basic dcpo is a dcpo $P$ for which there exists a reflexive coequalizer $p : C_{\alpha_1} + \dots + C_{\alpha_r} \to P$ of a parallel pair in $\C_0$.
	Thus $\C$ contains all basic dcpos.
	
	\item Every dcpo $X$ is a directed colimit of basic ones.
	The proof is completely analogous to that of Item~(4) in Proposition~\ref{P:CPO}.
	In the last argument showing that $s$ is continuous we just verify that $s$ preserves the join $x = \bigsqcup_{k < \alpha} x_k$ of every $\alpha$-chain, where $\alpha$ is an infinite cardinal.
	(This follows from a coinserter of $\id, \id: C_\alpha \to C_\alpha$ as in loc.\ cit.)
	Then $s$ preserves joins of increasing chains, which proves that it preserves directed joins (\cite{adamek+rosicky}, Corollary~1.7).
	
\end{enumerate}
\end{proof}

\begin{remark}
\label{R:surj}
\phantom{phantom}
\begin{enumerate}

	\item In $\Pos$ all coinserters are surjective.
	This is not true in $\CPO$: let $|\Nat|$ be the underlying discrete $\CPO$ and $R$ be the order of $\Nat$ (a discrete $\CPO$). Let $\pi_0,\pi_1: R \to |\Nat|$ be the projections onto the underlying set.
	Their coinserter is $|\Nat| \hookrightarrow \Nat^\top$.
	
	\item On the other hand, all coinserters used in the proof of Proposition~\ref{P:CPO} are surjective.
	In step~(1) this was $\id_P$.
	In step~(3) the map $p$ is surjective because its image is closed under $\omega$-joins in $P$.
	Indeed, that image is the union of $p[\{i\} \times \Nat^\top]$ for $i = 1,\dots,r$.
	Let $(x_n)$ be a strictly increasing $\omega$-chain in that image.
	Since the above union is finite, some $p[\{i\} \times \Nat^\top]$ contains a cofinal subchain $(x_{n(k)})_{k \in \Nat}$ of the chain $(x_n)$.
	Thus we have a strictly increasing chain $(j_k)_{k \in \Nat}$ in $\Nat$ with $x_{n(k)} = p(i,j_k)$ for all $k \in \Nat$.
	We conclude that $\bigsqcup_{n < \omega} x_n$ lies in the image of $p$:
	\[
	\bigsqcup_{n < \omega} x_n = \bigsqcup_{k < \omega} x_{n(k)} = \bigsqcup_{k < \omega} p(i,j_k).
	\]
	The last join equals $p(i,\top)$ since $p$ is continuous and $\bigsqcup_{k < \omega} (i,j_k) = (i, \top)$ in $\{ i \} \times \Nat^\top$.
	
	\item Analogously, all coinserters used in the proof of Proposition~\ref{P:DCPO} are surjective.
	We thus obtain the following
	
\end{enumerate}
\end{remark}

\begin{corollary}
	\label{C:cor}
	Strongly finitary endofunctors on $\Pos$, $\CPO$ or $\DCPO$ are precisely those preserving directed colimits and reflexive, surjective coinserters.
\end{corollary}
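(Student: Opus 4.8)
The plan is to reduce the statement to the characterizations already proved: Example~\ref{E:strong}(2) for $\Pos$ and Propositions~\ref{P:CPO} and~\ref{P:DCPO} for $\CPO$ and $\DCPO$, each of which says that an enriched endofunctor is strongly finitary iff it preserves directed colimits and \emph{all} reflexive coinserters (here ``directed'' and ``filtered'' may be used interchangeably, as the relevant conical filtered colimits reduce to directed ones). One inclusion is then immediate: a strongly finitary endofunctor preserves directed colimits and every reflexive coinserter, in particular the surjective ones. The whole content of the corollary lies in the converse, that preservation of directed colimits together with reflexive \emph{surjective} coinserters already forces preservation of all reflexive coinserters. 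This is a genuine strengthening, since not every reflexive coinserter is surjective in $\CPO$: the pair $\pi_0,\pi_1\colon R\to|\Nat|$ of Remark~\ref{R:surj}(1) is reflexive (split by the diagonal $n\mapsto(n,n)$), yet its coinserter $|\Nat|\hookrightarrow\Nat^\top$ is not surjective.

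For $\V=\Pos$ there is nothing to do: by Remark~\ref{R:surj}(1) every coinserter in $\Pos$ is surjective, so ``reflexive coinserters'' and ``reflexive surjective coinserters'' describe the same colimits, and Example~\ref{E:strong}(2) applies verbatim. For $\V=\CPO$ (and word for word for $\DCPO$) I would argue by structural induction along the generation of $\CPO$ from $\Set_\fin$ obtained inside the proof of Proposition~\ref{P:CPO}. Let $F$ preserve directed colimits and reflexive surjective coinserters, and let $\widehat F=\Lan{K}{(F\comp K)}$ be the strongly finitary endofunctor extending the restriction of $F$ to finite sets; it agrees with $F$ on $K[\Set_\fin]$ and preserves directed colimits and \emph{all} reflexive coinserters. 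Because $K$ is a full embedding, the unit of the Kan extension is invertible, and the induced comparison $\kappa\colon\widehat F\to F$ is an isomorphism on $K[\Set_\fin]$. Let $\A\subseteq\CPO$ be the full subcategory of objects $X$ for which $\kappa_X$ is invertible. Then $\A$ contains $\Set_\fin$, and it is closed under (i) directed colimits and (ii) reflexive surjective coinserters: in each case both $\widehat F$ and $F$ take the defining diagram to a colimit of the same weight, so $\kappa$ restricts to the canonical comparison of these two colimits, which is invertible as soon as $\kappa$ is invertible on the diagram.

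The step I expect to carry the weight is making sure that this closure already exhausts $\CPO$. This is exactly the import of Remark~\ref{R:surj}(2),(3): every coinserter occurring in the proof of Proposition~\ref{P:CPO} (respectively~\ref{P:DCPO}) is surjective, so the closure of $\Set_\fin$ under directed colimits and reflexive \emph{surjective} coinserters is already all of $\CPO$ (respectively $\DCPO$). Hence $\A=\CPO$, the comparison $\kappa$ is a natural isomorphism, and $F\cong\widehat F$ is strongly finitary. The only other point requiring care is the coinserter case of the induction, where one must check that $\kappa_C$ is invertible for the coinserter $c\colon B\to C$ of a reflexive pair $f_0,f_1$ whose coinserter is surjective, assuming $\kappa_A,\kappa_B$ invertible; this follows because both $\widehat F c$ and $Fc$ are coinserters of the (isomorphic) images of that pair, so $\kappa_C$ is the induced isomorphism of coinserters. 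Beyond these bookkeeping points the argument introduces no new mathematics.
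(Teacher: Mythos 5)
Your proposal is correct and follows essentially the same route as the paper, which derives the corollary from Remark~\ref{R:surj} together with the generation of $\Pos$, $\CPO$ and $\DCPO$ from $\Set_\fin$ by directed colimits and (surjective) reflexive coinserters established in Example~\ref{E:strong}(2) and Propositions~\ref{P:CPO} and~\ref{P:DCPO}. Your explicit structural induction on the invertibility locus of the comparison $\kappa\colon\widehat F\to F$ just spells out the closure argument the paper leaves implicit.
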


\begin{example}
\label{E:sf}
For the base categories $\Pos$, $\CPO$ or $\DCPO$:
\begin{enumerate}

	\item The endofunctor $(\blank)^n$ is strongly finitary ($n \in \Nat$).
	This follows, in view of the above Corollary, from Proposition~\ref{P:fil} and Theorem~\ref{T:Rci}.
	
	\item A coproduct of strongly finitary endofunctors is strongly finitary.
	
\end{enumerate}
\end{example}

\section{Varieties of Continuous Algebras}

We now introduce varieties of continuous algebras: the base category is $\CPO$ (see Appendix for details on it).
In the next section the analogous results about varieties of $\Delta$-continuous algebras (base category $\DCPO$) are presented.
A variety is a class of continuous algebras presented by equations between extended terms.
These terms use, besides the usual formation of composite terms, formal joins of $\omega$-chains $t = \bigvee_{k \in \Nat} t_k$ for countable sets of terms.
We use the symbol $\bigsqcup$ for joins in a concrete poset and $\bigvee$ for formal joins.
The underlying set of a cpo $P$ is denoted by $|P|$.

In the present section we prove that every variety of continuous algebras has the form $\CPO^\Tmon$ (the Eilenberg-Moore category) for a strongly finitary monad.
The converse is proved in Section~8: every strongly finitary monad yields a variety.

Throughout this section `category' means a $\CPO$-enriched category, and `functor' means a $\CPO$-enriched functor (i.e., a locally continuous one, see Appendix). 

\begin{assumption}
	For the rest of the paper $\Sigma$ denotes a finitary signature: every symbol $\sigma \in \Sigma$ is assigned an arity (which is a natural number).
	We assume that a countably infinite set
	\[
	V = \{ x_k \mid k \in \Nat \}
	\]
	of variables is specified.

\end{assumption}

The following definition stems essentially from the work of the ADJ group in the 1970s~\cite{ADJ}.

\begin{definition}
	A \emph{continuous algebra} is an algebra acting on a cpo $A$ with all operations continuous.
	That is, for every $n$-ary symbol $\sigma \in \Sigma$, we are given a map $\sigma_A : A^n \to A$ continuous w.r.t.\ the coordinate-wise order on $A^n$.
	
	We denote by $\Sigma\text{-}\CPO$ the category of continuous algebras and continuous homomorphisms.
\end{definition}

\begin{example}[Free algebras]
	\label{E:free}
	The description of a free algebra $T_\Sigma P$ on a given cpo $P$ is analogous to that in (non-ordered) universal algebra.
	The elements of $T_\Sigma P$ are \emph{classical terms} (we stress 'classical' since below we use more general terms).
	That is, the underlying set $|T_\Sigma P|$ is the smallest set such that
	\begin{enumerate}
		
		\item every variable, i.e.\ an element of $|P|$ is a classical term, and 
		
		\item for every $n$-ary symbol $\sigma \in \Sigma$ and every $n$-tuple $(t_i)_{i < n}$ of classical terms we get a composite term $\sigma(t_i)_{i < n}$.
		
	\end{enumerate}
	Composite terms define the operations on $|T_\Sigma P|$.
	
	Let us call two classical terms \emph{similar} iff either both are variables from $P$ or both have, for some $n$-ary symbol $\sigma$, the form $\sigma(t_i)_{i < n}$ and $\sigma(t_i')_{i < n}$, resp., such that $t_i$ is similar to $t_i'$ for all $i < n$.
	The order $\sqleq$ of $T_\Sigma P$ is as follows: only pairs of similar classical terms are comparable.
	For variables, $x \sqleq x'$ holds in $T_\Sigma P$ iff this holds in $P$.
	And for similar composite terms we put
	\[
	\sigma(t_i) \sqleq \sigma(t_i') \text{ iff } t_i \sqleq t_i' \text{ for each } i < n.
	\]
\end{example}

\begin{remark}
	\label{R:sigma}
	\phantom{phantom}
	\begin{enumerate}
		
		\item It is easy to see that two terms $t$ and $t'$ are similar iff we can obtain $t'$ from $t$ by changing some variables (in $|P|$) by other variables.
		And that $t \sqleq t'$ holds iff we can obtain $t'$ by changing some variables $x$ in $t$ by variables $x'$ with $x \sqleq x'$ in $P$.
		
		\item Consequently, $T_\Sigma P$ is a coproduct of powers $P^r$, one for each similarity class of terms on $r$ variables.
		Moreover, these classes are independent of the choice of $P$.
		We obtain the free-algebra functor $T_\Sigma : \CPO \to \Sigma\text{-}\CPO$ as a coproduct
		\[
		T_\Sigma = \Coprod \Id^r
		\]
		ranging over similarity classes of terms (on $r$ variables).
		
	\end{enumerate}
\end{remark}

\begin{definition}
	A monad is strongly finitary if its underlying functor has this property.
\end{definition}

\begin{proposition}
	\label{C:free}
	The monad $\Tmon_\Sigma$ of free $\Sigma$-algebras on $\CPO$ is strongly finitary.
\end{proposition}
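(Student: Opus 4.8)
The plan is to exploit the explicit structure of the free-algebra functor recorded in Remark~\ref{R:sigma}, together with the closure properties of strongly finitary functors established in Example~\ref{E:sf}. By Remark~\ref{R:sigma}(2) the underlying functor of $\Tmon_\Sigma$ decomposes as a coproduct
\[
T_\Sigma = \Coprod \Id^r,
\]
indexed by the similarity classes of classical terms, where $r$ is the number of variables occurring in a representative term. So the whole argument reduces to two facts we already have in hand: each power functor $\Id^r = (\blank)^r$ is strongly finitary (Example~\ref{E:sf}(1)), and strongly finitary endofunctors are closed under coproducts (Example~\ref{E:sf}(2)).

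First I would make precise that the coproduct in Remark~\ref{R:sigma}(2) is a genuine coproduct of endofunctors of $\CPO$, computed pointwise: for each cpo $P$ the value $T_\Sigma P = \coprod_r P^r$ is the coproduct in $\CPO$ of the powers $P^r$, one copy per similarity class, and the action on morphisms is the evident one. The key point here is that the similarity classes are independent of $P$, so the indexing set of the coproduct is fixed and the assignment really is functorial; this is exactly what Remark~\ref{R:sigma}(2) asserts. Second, invoking Example~\ref{E:sf}(1), each summand $(\blank)^r$ preserves directed colimits and reflexive, surjective coinserters, hence is strongly finitary by Corollary~\ref{C:cor} (equivalently, each $(\blank)^r$ preserves sifted colimits by Proposition~\ref{P:endosf}). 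Third, applying Example~\ref{E:sf}(2), the coproduct of this (possibly infinite) family of strongly finitary functors is again strongly finitary, which gives that the underlying functor $T_\Sigma$ is strongly finitary. Finally, since a monad is declared strongly finitary precisely when its underlying functor is (Definition immediately preceding the statement), this concludes the proof.

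The only delicate point is whether Example~\ref{E:sf}(2) as stated covers the coproduct over the full, infinite family of similarity classes, rather than merely finite coproducts. If the closure result is only asserted for finite coproducts, the fix is routine: an arbitrary coproduct of strongly finitary functors is a filtered colimit of its finite subcoproducts, and strongly finitary functors — being exactly the sifted-colimit-preserving ones by Proposition~\ref{P:endosf}, hence in particular the filtered-colimit-preserving ones — are closed under such filtered colimits computed pointwise in the functor category. I expect this bookkeeping about arbitrary versus finite coproducts to be the main (and only genuine) obstacle; everything else is a direct citation of the structural description of $T_\Sigma$ and the two parts of Example~\ref{E:sf}.
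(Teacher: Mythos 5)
Your proposal is correct and follows exactly the paper's own argument: the paper proves this proposition by citing Remark~\ref{R:sigma} for the decomposition $T_\Sigma = \Coprod \Id^r$ and Example~\ref{E:sf} for the two closure facts. Your worry about infinite versus finite coproducts is moot since Example~\ref{E:sf}(2) is stated for arbitrary coproducts, but your suggested workaround via filtered colimits of finite subcoproducts would be a valid fallback in any case.
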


This follows from the previous remark and Example~\ref{E:sf}.

We are going to define varieties of continuous algebras as classes presentable by equations between terms.
The following definition extends the above concept of classical terms by allowing terms $t = \bigvee_{k \in \Nat} t_k$
for every collection $(t_k)_{k \in \Nat}$ of terms on finitely many variables.
Our definition is very similar to that in~\cite{anr} where, however, the restriction to collections with finitely many variables was not required.

\begin{definition}
	\label{D:ext}
	For the set $V$ of variables we define the set $\T_\Sigma V$ of \emph{(extended) terms} as the smallest set such that
	\begin{enumerate}
		
		\item Every variable in $V$ is a term.
		
		\item Every $n$-ary symbol $\sigma$ and every $n$-tuple $(t_i)_{i<n}$ of terms yields a composite term $\sigma(t_i)_{i<n}$.
		
		\item Every countable collection $t_k$ ($k \in \Nat$) of terms, all of which contain only finitely many variables, yields a term
		\[
		t = \bigvee_{k \in \Nat} t_k.
		\]
		
	\end{enumerate}
\end{definition}

\begin{notation}
	We denote by $\T_\Sigma V_n$ the subset of $\T_\Sigma V$ of extended terms using only variables from $V_n = \{ x_0, \dots, x_{n-1} \}$.
	By the above definition we have
	\[
	\T_\Sigma V = \bigcup_{n < \omega} \T_\Sigma V_n.
	\]
\end{notation}

\begin{example}
	\phantom{phantom}
	\begin{enumerate}
		
		\item Let $\Sigma$ consist of a unary operation $\sigma$.
		Then we have the following term
		\[
		\bigvee_{k \in \Nat} \sigma^k(x).
		\]
		To interpret this in an algebra $A$, we need not only to have a continuous operation $\sigma_A: A \to A$, but also have to know that for every interpretation $x \mapsto a$ of the variable the sequence $\sigma_A^k (a)$ in $A$ is an $\omega$-chain.
		This indicates that interpretation of terms will be a \emph{partial} map.

		\item For the above signature and our set $V = \{ x_k \mid k \in \Nat \}$ of variables the expression
		\[
		\bigvee_{k \in \Nat} \sigma(x_k)
		\]
		is not a term: it contains infinitely many variables.
		
	\end{enumerate}
\end{example}

\begin{definition}
	\label{D:inter}
	Let $A$ be a continuous algebra.
	Given an interpretation $f : V \to A$ of variables, we define the \emph{interpretation of extended terms} as the following partial function
	\[
	f^\sharp: \T_\Sigma V \rightharpoonup A:
	\]
	\begin{enumerate}
		
		\item $f^\sharp (x) = f(x)$ for each variable $x \in V$.
		
		\item $f^\sharp$ is defined in $\sigma(t_i)_{i < n}$ iff each $f^\sharp(t_i)$ is defined. Then $f^\sharp(\sigma(t_i)_{i < n}) = \sigma_A(f^\sharp(t_i))_{i < n}$.
		
		\item $f^\sharp$ is defined in $\bigvee_{k \in \Nat} t_k$ iff each $f^\sharp(t_k)$ is defined and fulfils $f^\sharp(t_k) \sqleq f^\sharp(t_{k+1})$.
		Then $f^\sharp(t) = \bigsqcup_{k \in \Nat} f^\sharp(t_k)$.
		
	\end{enumerate}
\end{definition}

\begin{definition}
	\label{D:eq}
	A \emph{variety of continuous algebras} is a full subcategory $\Vvar$ of $\Sigma\text{-}\CPO$ presented by a set of equations $t = t'$ between extended terms ($t,t' \in \T_\Sigma V$): an algebra $A$ lies in $\Vvar$ iff for each of the given equations $t = t'$ and each interpretation $f : V \to A$ both $f^\sharp(t)$ and $f^\sharp(t')$ are defined and are equal.
\end{definition}

\begin{example}
	\phantom{phantom}
	\begin{enumerate}
		
		\item Let $\Sigma$ consist of a unary symbol $\sigma$ and a nullary one $\top$.
		A continuous algebra is a cpo $A$ together with a continuous self-map $\sigma_A$ and an element $\top_A$.
		The equation
		\[
		\bigvee_{k \in \Nat} \sigma^n(x) = \top
		\]
		is satisfied iff every element $a \in A$ fulfils $a \sqleq \sigma_A(a)$ (thus, $\sigma_A^k(a)$ is an $\omega$-chain) and the join $\bigsqcup_{k \in \Nat} \sigma^k (a)$ is always $\top_A$.
		
		\item Continuous monoids are given by the signature $\Sigma = \{ \cdot, e \}$ and the usual equations (associativity of $\cdot$ and $e$ being a unit).
		Thus a continuous monoid is a monoid acting on a cpo so that for all $\omega$-chains $(a_k)$, $(b_k)$ we have
		\[
		( \bigsqcup_{k \in \Nat} a_k ) ( \bigsqcup_{k \in \Nat} b_k ) = \bigsqcup_{k \in \Nat} a_k b_k.
		\]
		Notice that the last equality simply expresses that the multiplication is continuous.
		(We do not have to specify the equation $(\bigvee x_k) (\bigvee y_k) = \bigvee (x_k y_k)$. Indeed, this is not an equation in our sense at all because it contains infinitely many variables.)
		
		\item Continuous monoids satisfying
		\[
		\bigvee_{k \in \Nat} x^k = e.
		\]
		These are monoids with $a \sqleq a^2$ and $\bigsqcup_{k \in \Nat} a^k = e$ for every element $a$.
		
	\end{enumerate}
\end{example}

\begin{remark}
	\label{R:def}
	\phantom{phantom}
	\begin{enumerate}
		
		\item Instead of equations we can also use formal \emph{inequations} $t \sqleq t'$ between terms.
		This is equivalent: given terms $t,t'$ define a term
		\[
		s = \bigvee_{k \in \Nat} s_k \text{ with } s_0 = t \text{ and } s_k = t' \text{ ($ k \geq 1 $).}
		\]
		Then the equation $t = s$ expresses precisely that $t \sqleq t'$.
		
		\item Another possibility, instead of equations, is using \emph{definability} of terms.
		Let us say that a term $t$ is definable in a continuous algebra $A$ iff $f^\sharp (t)$ is defined for all interpretations $f : V \to A$.
		This is the case iff $A$ satisfies the equation $t = t$.
		Conversely, given terms $t,t'$, an algebra $A$ satisfies $t = t'$ iff the term $s$ in item (1) is definable in $A$.
		
	\end{enumerate}
\end{remark}

\begin{lemma}[\cite{anr}, Proposition~3.5]
	\label{L:inter}
	Every continuous homomorphism $f : A \to B$ preserves definability of terms $t \in \T_\Sigma V$: given an interpretation $h : V \to A$ with $h^\sharp(t)$ defined in $A$, then $(fh)^\sharp (t)$ is defined in $B$ and is equal to $f(h^\sharp(t))$.
\end{lemma}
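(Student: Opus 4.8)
The plan is to proceed by structural induction on the term $t$, following the three clauses of Definition~\ref{D:ext}, and to verify at each stage the conjunction of two assertions: that definedness of $h^\sharp(t)$ in $A$ forces definedness of $(fh)^\sharp(t)$ in $B$, and that in this case $(fh)^\sharp(t) = f(h^\sharp(t))$. The induction hypothesis is taken to carry both assertions for all proper subterms simultaneously, since the definedness claim is exactly what feeds the recursive clauses of Definition~\ref{D:inter}.

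For the base case $t = x$ with $x \in V$, the value $h^\sharp(x) = h(x)$ is always defined, and by Definition~\ref{D:inter}(1) we have $(fh)^\sharp(x) = (fh)(x) = f(h(x)) = f(h^\sharp(x))$. For a composite term $t = \sigma(t_i)_{i<n}$, definedness of $h^\sharp(t)$ yields definedness of each $h^\sharp(t_i)$ by Definition~\ref{D:inter}(2); the induction hypothesis then supplies definedness of each $(fh)^\sharp(t_i)$ together with $(fh)^\sharp(t_i) = f(h^\sharp(t_i))$, so $(fh)^\sharp(t)$ is defined and equals $\sigma_B(f(h^\sharp(t_i)))_{i<n}$. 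Invoking that $f$ is a homomorphism, $f \comp \sigma_A = \sigma_B \comp f^n$, rewrites this as $f(\sigma_A(h^\sharp(t_i))_{i<n}) = f(h^\sharp(t))$, as required.

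The only genuinely delicate clause is the formal join $t = \bigvee_{k \in \Nat} t_k$. Definedness of $h^\sharp(t)$ means, by Definition~\ref{D:inter}(3), that each $h^\sharp(t_k)$ is defined, that $(h^\sharp(t_k))_{k \in \Nat}$ is an $\omega$-chain, and that $h^\sharp(t) = \bigsqcup_k h^\sharp(t_k)$. By the induction hypothesis each $(fh)^\sharp(t_k)$ is defined and equals $f(h^\sharp(t_k))$. Here I must first use that $f$, being continuous, is in particular monotone, so that the image $(f(h^\sharp(t_k)))_k$ of an $\omega$-chain is again an $\omega$-chain; this is exactly the hypothesis Definition~\ref{D:inter}(3) demands in order that $(fh)^\sharp(t)$ be \emph{defined} in $B$. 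Once definedness is secured, I invoke the full continuity of $f$, i.e.\ preservation of $\omega$-joins, to compute $(fh)^\sharp(t) = \bigsqcup_k f(h^\sharp(t_k)) = f(\bigsqcup_k h^\sharp(t_k)) = f(h^\sharp(t))$.

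I expect no obstacle beyond this join clause, and there the subtlety is that the two halves of continuity play distinct roles: monotonicity alone is what guarantees the transported chain stays an $\omega$-chain, so that $(fh)^\sharp(t)$ is even defined (it is definedness, not merely the value, that could a priori fail after transport along $f$), while preservation of joins is what delivers the claimed equality of values. Everything else is bookkeeping of the partial recursion in Definition~\ref{D:inter}.
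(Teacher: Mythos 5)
Your proof is correct. The paper does not actually prove this lemma itself---it cites \cite{anr}, Proposition~3.5, noting only that the argument there transfers to the present (more restrictive) notion of term---and your structural induction on $t$, with the key observation that monotonicity of $f$ is what secures \emph{definedness} of $(fh)^\sharp\bigl(\bigvee_{k} t_k\bigr)$ while preservation of $\omega$-joins gives the equality of values, is exactly the standard argument that citation alludes to.
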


The proof in~\cite{anr} uses more general terms: it is not required that only finitely many variables are involved.
But it applies with no modifications to our situation.

\begin{remark}
	\label{R:lift}
	\label{R:fact}
	The factorization system of Lemma~\ref{L:factor} lifts to $\Sigma\text{-}\CPO$: let $f : A \to B$ be a continuous homomorphism.
	Form the closure $B_0$ of $f[A]$ under $\omega$-joins as in the proof of that lemma.
	It is sufficient to verify that $f[A]$ is closed under all operations $\sigma \in \Sigma$.
	Let $\sigma$ be $n$-ary.
	We prove that the set $X \subseteq B_0^n$ of all $n$-tuples that $\sigma_B$ maps to $B_0$ is all of $B_0^n$.
	Since $f[A]$ is $\omega$-dense in $B_0$ (Definition~\ref{D:dense}), we know that $(f[A])^n$ is $\omega$-dense in $B_0^n$ (Lemma~\ref{L:dense}).
	The set $X$ is closed under $\omega$-joins in $B_0^n$ because $\sigma_B$ is continuous.
	Thus, we only need to verify $(f[A])^n \subseteq X$.
	Indeed, given an $n$-tuple $b_i = f(a_i)$, then $\sigma_B(b_i) = f(\sigma(a_i)) \in f[A]$.
\end{remark}

\begin{remark}
	\label{R:HSP}
	In classical universal algebra varieties are precisely the classes closed under homomorphic images, subalgebras, and products (HSP classes).
	We have these constructions for continuous algebras, too:
	\begin{enumerate}
		
		\item A \emph{product} (P) of continuous algebras $A_i$ ($i \in I$) is their cartesian product $\prod_{i \in I} A_i$ with order and operations defined coordinate-wise.
		
		\item Let $A$ be a continuous algebra.
		A \emph{subalgebra} (S) is a subobject in $\Sigma\text{-}\CPO$ represented by an embedding $m : B \hookrightarrow A$ of a sub-cpo of $A$.
		
		\item A \emph{homomorphic image} (H) of $A$ is an algebra $B$ with a surjective morphism $e : A \to B$ in $\Sigma\text{-}\CPO$.
		
	\end{enumerate}
\end{remark}

\begin{proposition}
	\label{P:HSP}
	Every variety is an HSP class, i.e.\ closed in $\Sigma\text{-}\CPO$ under products, subalgebras, and homomorphic images.
\end{proposition}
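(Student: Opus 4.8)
The plan is to handle the three closure properties one at a time, in each case reducing satisfaction of a single equation $t=t'$ (with $t,t'\in\T_\Sigma V$) to the assumption that the constituent algebras lie in $\Vvar$. The common engine is that the interpretation map $f^\sharp$ of Definition~\ref{D:inter} is defined by structural recursion on terms; accordingly all three arguments proceed by structural induction on $t$, tracking simultaneously \emph{where} $f^\sharp$ is defined and \emph{which value} it takes.

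For products, let $A=\prod_{i\in I}A_i$ with each $A_i\in\Vvar$, and let $f:V\to A$ be an interpretation with components $f_i=\pi_i\circ f$. Since the order and the operations of $A$, as well as its $\omega$-joins, are all computed coordinate-wise, a routine induction shows that $f^\sharp(t)$ is defined in $A$ iff every $f_i^\sharp(t)$ is defined in $A_i$, and then $\pi_i\big(f^\sharp(t)\big)=f_i^\sharp(t)$. Applying this to both $t$ and $t'$ and invoking $A_i\in\Vvar$ yields $f^\sharp(t)=f^\sharp(t')$ coordinate-wise, hence in $A$.

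For homomorphic images, let $e:A\to B$ be surjective with $A\in\Vvar$, and let $g:V\to B$ be arbitrary. Because $e$ is surjective on elements I can choose a lift $f:V\to A$ with $e\circ f=g$. Then $A\in\Vvar$ gives that $f^\sharp(t)$ and $f^\sharp(t')$ are defined and equal, and Lemma~\ref{L:inter} (homomorphisms preserve definability) propagates this along $e$, giving $g^\sharp(t)=(ef)^\sharp(t)=e\big(f^\sharp(t)\big)$ and likewise for $t'$, whence $g^\sharp(t)=g^\sharp(t')$. This case is essentially immediate once Lemma~\ref{L:inter} is available.

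The subalgebra case is the delicate one, and I expect it to be the main obstacle. Here $m:B\hookrightarrow A$ is the (order-reflecting) embedding of a sub-cpo closed under operations, with $A\in\Vvar$, and the difficulty is that definability must be transferred \emph{downwards}: knowing that $(mf)^\sharp(t)$ is defined in $A$, I must show that $f^\sharp(t)$ is already defined in $B$. The induction succeeds precisely because $B$ is closed under the operations and, crucially, is a sub-cpo, hence closed under the $\omega$-joins computed in $A$; together with $m$ reflecting the order (so that the chain condition on the $t_k$ descends from $A$ to $B$), each inductive step produces a value lying in $B$ satisfying $m\big(f^\sharp(t)\big)=(mf)^\sharp(t)$. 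Finally $A\in\Vvar$ gives $(mf)^\sharp(t)=(mf)^\sharp(t')$, and injectivity of $m$ forces $f^\sharp(t)=f^\sharp(t')$, so $B\in\Vvar$.
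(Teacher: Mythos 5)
Your proof is correct and follows essentially the same route as the paper's: the paper reduces (via Remark~\ref{R:def}) to showing that definability classes of single terms are HSP and then cites~\cite{anr} for the structural induction, whereas you carry that induction out directly for equations $t=t'$, with the product and subalgebra cases resting on coordinatewise (resp.\ restricted) order and $\omega$-joins and the homomorphic-image case on lifting interpretations along the surjection and applying Lemma~\ref{L:inter}. I see no gaps.
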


\begin{proof}
	By Remark~\ref{R:def} we need to verify that given an extended term $t \in \T_\Sigma V$ the class of all algebras in which $t$ is definable is an HSP class.
	The proof presented in~\cite{anr} (pp.\ 339--340) works without changes in our (more restricting) setting.
\end{proof}

\begin{corollary}
	\label{P:free}
	Every variety $\Vvar$ has free algebras: the forgetful functor $U_\Vvar : \Vvar \to \CPO$ has a left adjoint $F_\Vvar : \CPO \to \Vvar$. 
\end{corollary}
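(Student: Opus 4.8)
The plan is to realize $\Vvar$ as a reflective subcategory of $\Sigma\text{-}\CPO$ and to obtain $F_\Vvar$ as the composite of the reflector with the free-$\Sigma$-algebra functor $T_\Sigma$. By Proposition~\ref{P:HSP} the variety $\Vvar$ is an HSP class; in particular it is closed under products and subalgebras in $\Sigma\text{-}\CPO$. Since every limit is a subobject of a product, $\Vvar$ is thereby closed under all limits existing in $\Sigma\text{-}\CPO$. As $\Sigma\text{-}\CPO = \CPO^{\Tmon_\Sigma}$ is the Eilenberg--Moore category of a monad over the complete category $\CPO$, it is complete (limits being created by the forgetful functor), and hence $\Vvar$ is complete as well.

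I would then verify the hypotheses of the General Adjoint Functor Theorem for $U_\Vvar : \Vvar \to \CPO$. This functor factors as the full inclusion $\Vvar \hookrightarrow \Sigma\text{-}\CPO$ followed by the monadic forgetful functor onto $\CPO$; the latter preserves limits, and the former does too since $\Vvar$ is limit-closed, so $U_\Vvar$ is continuous. It remains to check the solution-set condition. Fix a cpo $P$ and any continuous map $h : P \to U_\Vvar B$ with $B \in \Vvar$. Let $\langle h[P]\rangle \subseteq B$ be the subalgebra generated by $h[P]$: the smallest subset containing $h[P]$, closed under all operations $\sigma_B$, and closed under the $\omega$-joins of $B$ (so that it is a sub-cpo, exactly as in Remark~\ref{R:fact}). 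Being a subalgebra of $B$, it lies in $\Vvar$ by closure under subalgebras, and $h$ factors through the inclusion $\langle h[P]\rangle \hookrightarrow B$. The key point is a cardinality bound: $\langle h[P]\rangle$ is built from $h[P]$ by transfinitely alternating the two closure steps -- applying the finitely many finitary operations, and adjoining joins of $\omega$-chains -- and this process stabilizes after $\aleph_1$ steps, yielding $|\langle h[P]\rangle| \leq \lambda$ for a cardinal $\lambda$ depending only on $|P|$ (and on $\Sigma$, $\aleph_0$). Hence, up to isomorphism, only a set of $\Vvar$-algebras of cardinality at most $\lambda$ occurs, and together with the set-many maps from $P$ into their underlying cpos these form a solution set. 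The General Adjoint Functor Theorem then produces the desired left adjoint $F_\Vvar$.

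The main obstacle is the construction and the cardinality estimate for $\langle h[P]\rangle$. One must show that alternately closing a set under the finitary operations and under $\omega$-joins indeed yields a sub-cpo that is simultaneously operation-closed -- the delicate interplay already settled for images in Remark~\ref{R:fact} -- and that this closure terminates at a set-sized stage, so that $\lambda$ is a genuine cardinal rather than a proper class. Once this is secured, the reflectivity of $\Vvar$ in $\Sigma\text{-}\CPO$, and with it the existence of free $\Vvar$-algebras, follows formally.
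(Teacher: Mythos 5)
Your argument is correct, but it replaces the paper's smallness argument by a different one, so it is worth comparing. Both proofs rest on Proposition~\ref{P:HSP}: $\Vvar$ is closed under products and subalgebras in $\Sigma\text{-}\CPO$. The paper then constructs free $\Sigma$-algebras explicitly as term algebras (Remark~\ref{R:sigma}) and obtains $F_\Vvar$ by composing with a reflector $\Sigma\text{-}\CPO \to \Vvar$, whose existence is deduced from the $(\EE,\MM)$-factorization system of Remark~\ref{R:fact} together with $\EE$-cowellpoweredness (Lemma~\ref{L:cow}), via the reflectivity theorem of Ad\'amek--Herrlich--Strecker; the smallness input there is a bound on the \emph{quotients} of a given algebra with $\omega$-dense image, and as a bonus the reflections are known to lie in $\EE$, a fact the paper exploits elsewhere. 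You instead apply the General Adjoint Functor Theorem directly to $U_\Vvar$, with the smallness input being a bound on \emph{generated subalgebras}: the transfinite alternation of closing under the finitary operations and under $\omega$-joins does stabilize after $\aleph_1$ steps (any $\omega$-chain in the union of an increasing $\omega_1$-chain of stages lies in a single stage, by regularity of $\omega_1$), and since each stage has cardinality at most $\mu^{\aleph_0}$ in terms of the previous cardinality $\mu$, and $(\mu^{\aleph_0})^{\aleph_0}=\mu^{\aleph_0}$, the whole closure is bounded by $\aleph_1\cdot(|P|+\card\,\Sigma+\aleph_0)^{\aleph_0}$ --- a genuine cardinal, so your solution set exists and the delicate point you flag is indeed unproblematic. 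Your route has the minor advantage of not needing the explicit term-algebra construction or the cowellpoweredness lemma; the paper's route is shorter given the infrastructure it has already built (the factorization system and Lemma~\ref{L:cow}) and yields the extra information that the unit of the reflection is an $\omega$-dense-image quotient. Your completeness claim (``every limit is a subobject of a product'') should be phrased as: limits are equalizers of pairs between products, and equalizers in $\Sigma\text{-}\CPO$ are $\MM$-subobjects, hence subalgebras in the sense of Remark~\ref{R:HSP}; with that gloss the argument is complete.
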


\begin{proof}
	This is true for $\Vvar = \Sigma\text{-}\CPO$ by Remark~\ref{R:sigma}.
	Thus it is sufficient to prove that for every variety $\Vvar$ the embedding $\Vvar \hookrightarrow \Sigma\text{-}\CPO$ has a left adjoint, i.e., $\Vvar$ is reflective.
	We use the factorization system of Remark~\ref{R:fact}: we know that $\Vvar$ is closed under products and $\MM$-subobjects (Proposition~\ref{P:HSP}) and that $\Sigma\text{-}\CPO$ is $\EE$-cowellpowered since $\CPO$ is (Lemma~\ref{L:cow}).
	Thus $\Vvar$ is reflective with reflections in $\EE$ by~\cite{ahs}, Theorem~16.8.
\end{proof}

\begin{proposition}
	\label{P:D}
	Every variety is closed under filtered colimits in $\Sigma\text{-}\CPO$.
\end{proposition}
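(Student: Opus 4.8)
The plan is to fix a filtered diagram $(A_i)_{i\in I}$ of algebras in $\Vvar$ and to show that its colimit $A$ in $\Sigma\text{-}\CPO$ again satisfies every defining equation $t=t'$ of $\Vvar$; since $\Vvar$ is the full subcategory cut out by such equations, this is exactly what closure under filtered colimits asks for. First I would record that the colimit is computed levelwise on cpos: because $\Tmon_\Sigma$ is strongly finitary (Proposition~\ref{C:free}), it preserves filtered colimits, so the forgetful functor $\Sigma\text{-}\CPO\to\CPO$ creates filtered colimits. Thus the underlying cpo of $A$ is the filtered colimit of the $A_i$ in $\CPO$, equipped with a cocone $\beta_i\colon A_i\to A$ of continuous homomorphisms.

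The main obstacle is that the forgetful functor $\CPO\to\Set$ does \emph{not} preserve filtered colimits: the chain $C_0\hookrightarrow C_1\hookrightarrow\cdots$ of Example~\ref{E:N} has colimit $\Nat^\top$, whose top element lies in no image $\beta_k[C_k]$. Hence a finite tuple of elements of $A$ — such as the values $f(x_0),\dots,f(x_{n-1})$ of an interpretation $f\colon V\to A$ of the finitely many variables occurring in $t,t'$ — need not factor through a single stage, so the naive argument fails. What survives is a density statement: the union $\bigcup_{i}\beta_i[A_i]$ is $\omega$-dense in $A$. Indeed, the smallest sub-cpo $A_0\subseteq A$ containing all images $\beta_i[A_i]$ carries the corestricted cocone $\beta_i^{A_0}$, so the colimit property of $A$ in $\CPO$ yields a continuous map $A\to A_0$ splitting the inclusion; since both composites agree on the generating set $\bigcup_i\beta_i[A_i]$ and are continuous, they are identities, whence $A_0=A$.

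Next I would evaluate terms coordinatewise. For a term $u\in\T_\Sigma V_n$ and a cpo $B$, interpretation (Definition~\ref{D:inter}) gives a partial map $\hat u\colon B^n\rightharpoonup B$ with domain of definition $D_u(B)$. The technical heart is the claim, proved by structural induction on $u$, that $D_u(B)$ is closed under $\omega$-joins in $B^n$ and that $\hat u$ is continuous on it: variables give projections; composite terms use continuity of $\sigma_B$ together with the fact that an intersection of $\omega$-closed sets is $\omega$-closed; and for a formal join $u=\bigvee_k u_k$ one verifies the chain condition at a limit $\bar b^\ast=\bigsqcup_p \bar b^{(p)}$ by pushing it through each $\hat u_k$ and then exchanging the resulting double $\omega$-join. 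Applying this to $t$ and $t'$, the validity set $E=\{\bar b\in A^n : \hat t(\bar b),\hat{t'}(\bar b)\text{ are defined and equal}\}$ is again closed under $\omega$-joins.

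Finally I would combine the ingredients. For a single-stage tuple $\bar b=\beta_i\circ\bar a$ with $\bar a\in A_i^{\,n}$, the algebra $A_i\in\Vvar$ gives $\bar a^\sharp(t)=\bar a^\sharp(t')$, and Lemma~\ref{L:inter} transports this along the homomorphism $\beta_i$, so $\bar b\in E$. Hence $E$ contains $\bigl(\bigcup_i\beta_i[A_i]\bigr)^n$, which is $\omega$-dense in $A^n$ (density is preserved by finite products, cf.\ Lemma~\ref{L:dense}). As $E$ is $\omega$-join-closed it must be all of $A^n$, that is $A\models t=t'$; and since $t=t'$ was an arbitrary defining equation, $A\in\Vvar$. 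I expect the structural-induction lemma on the $\omega$-closure of $D_u(B)$ and the continuity of $\hat u$ — precisely the point where the new joins created in the colimit are controlled — to be the real work, while everything else is assembling Lemma~\ref{L:inter}, the density of the cocone images, and creation of filtered colimits by the finitary monad.
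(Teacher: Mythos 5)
Your proof is correct and follows essentially the same route as the paper's: pass to the underlying directed colimit in $\CPO$, use the $\omega$-density of $\bigcup_i \beta_i[A_i]$ together with directedness and Lemma~\ref{L:inter} to handle tuples from a single stage, use Lemma~\ref{L:dense} for finite powers, and control the new $\omega$-joins by a structural induction on terms culminating in the interchange of a double $\omega$-join. The only difference is packaging: you prove once and for all that for any continuous algebra the domain of definition of a term is a sub-cpo on which evaluation is continuous and work with the validity set of the equation directly, whereas the paper first reduces equations to definability of a single term (Remark~\ref{R:def}) and runs the corresponding induction only along chains of interpretations into the dense subset, via an explicit ordinal stratification of $\T_\Sigma V$ --- the underlying computation is identical.
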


\begin{proof}
	To simplify the notation, we work with directed colimits in place of the filtered ones.
	This does note lose generality, see~\cite{adamek+rosicky}, Corollary~1.5.
	For every term $t$ we prove that the class of all algebras in which $t$ is definable is closed under directed colimits.
	This proves the proposition by Remark~\ref{R:def}~(2).
	Let a directed colimit in $\Sigma\text{-}\CPO$ be given:
	\[
	a_s : A_s \to A \text{ for } s \in S.
	\]
	Assuming that $t$ is definable in each $A_s$, we prove that it is definable in $A$.
	We proceed by structural induction: the statement is true (a) for variables in $V$, (b) for terms $t = \sigma(t_i)_{i < n}$ whenever it holds for each $t_i$, and (c) for terms $t = \bigvee_{k \in \Nat} t_k$ whenever it holds for each $t_k$.
	\begin{enumerate}[label=(\alph*)]
		
		\item This is obvious: variables are everywhere definable.
		
		\item Let $f : V \to A$ be an interpretation.
		We have $f^\sharp(t_i)$ defined for all $i$, thus by definition of $f^\sharp$ we also have
		\[
		f^\sharp(t) = \sigma_A(f^\sharp(t_i))_{i < n}.
		\]
		
		\item By assumption on $\T_\Sigma V$ there is a finite set $V_0$ of variables containing all variables in $t_k$ for $k \in \Nat$.
		Let $f : V_0 \to A$ be an interpretation.
		The union of images
		\[
		X = \bigcup_{s \in S} a_s[A_s]
		\]
		is $\omega$-dense in $A$ (Lemma~\ref{L:colim-d}).
		Hence in the finite power $[V_0,A]$ of the cpo $A$ the set $[V_0,X]$ is $\omega$-dense (Lemma~\ref{L:dense}).
		We prove that $f^\sharp (t)$ is defined by structural induction: we need to verify that (i) this is true for all $f : V_0 \to X$, and (ii) if $(f_n)$ is an $\omega$-chain in $[V_0,X]$ with all $f_n^\sharp(t)$ defined, then $f^\sharp(t)$ is defined for $f = \bigsqcup_{n \in \Nat} f_n$.
		\begin{enumerate}[label=(\roman*)]
			
			\item Since $V_0$ is finite and $X$ is a directed union, there exists $s \in S$ such that $f : V_0 \to X$ factorizes through $a_s$:
			\[
			f = a_s \comp g \text{ for some } g : V_0 \to A_s.
			\]
			By assumption $t$ is definable in $A_s$, thus $g^\sharp(t)$ is defined.
			Apply Lemma~\ref{L:inter} to the continuous homomorphism $a_s$ to conclude $f^\sharp(t) = a_s(g^\sharp(t))$.
			
			\item For $f = \bigsqcup_{n \in \Nat} f_n$ we prove $f^\sharp(t) = \bigsqcup_{n \in \Nat} f_n^\sharp(t)$ by structural induction over $\T_\Sigma V$.
			We define a chain of subsets $\T_\Sigma^j V \subseteq \T_\Sigma V$ for ordinals $j$ by transfinite recursion as follows:
			\begin{enumerate}[label=(\alph*)]
				
				\item $\T_\Sigma^0 V = V$.
				
				\item $\T_\Sigma^{j+1} V$ consists of all terms in $\T_\Sigma^{j} V$, all terms $\sigma(t_i)_{i < n}$ with each $t_i$ in $\T_\Sigma^j V$, and all terms $\bigvee_{k \in \Nat} t_k$ with each $t_k$ in $\T_\Sigma^j V$.
				
				\item $\T_\Sigma^j V = \bigcup_{i < j} \T_\Sigma^i V$ for limit ordinals $j$.
				
			\end{enumerate}
			It is easy to see that for the first uncountable ordinal $\omega_1$ we have
			\[
			\T_\Sigma V = \T_\Sigma^{\omega_1} V.
			\]
			We prove that for every term $t \in \T_\Sigma^j$ definable in all $A_s$ we have
			\[
			f_n^\sharp (t) \sqleq f_{n+1}^\sharp(t) \text{ for } n \in \Nat \text{ and } f^\sharp(t) = \bigsqcup_{n \in \Nat} f_n^\sharp(t).	\tag{\textasteriskcentered}
			\]
			We use transfinite induction on $j$.
			This statement is obvious if $t$ is a variable, and for limit ordinals $j$ there is nothing to prove.
			Our task is thus to prove (\textasteriskcentered) for every $t \in \T_\Sigma^{j+1} V$ provided that it holds for all terms in $\T_\Sigma^j V$.
			In case $t = \sigma(t_i)_{i < n}$ with all $t_i$ in $\T_\Sigma^j V$, we have $f_n^\sharp (t_i) \sqleq f_{n+1}^\sharp (t_i)$ for all $n \in \Nat$ and $i < n$, thus
			\[
			f_n^\sharp (t) = \sigma_A(f_n^\sharp(t_i)) \sqleq \sigma_A(f_{n+1}^\sharp (t_i)) = f_{n+1}^\sharp(t)
			\]
			because $\sigma_A$ is monotone.
			The proof of $f^\sharp(t) = \bigsqcup_{n \in \Nat} f_n^\sharp(t)$ is as follows.
			\begin{align*}
				f^\sharp(t) & = f^\sharp(\sigma(t_i)) \\
				& = \sigma_A(f^\sharp(t_i)) && \text{def.\ of $f^\sharp$} \\
				& = \sigma_A(\bigsqcup_{n \in \Nat} f_n^\sharp(t_i)) && \text{induction hyp.} \\
				& = \bigsqcup_{n \in \Nat} \sigma_A(f_n^\sharp(t_i)) && \text{$\sigma_A$ continuous} \\
				& = \bigsqcup_{n \in \Nat} f_n^\sharp (t) && \text{def.\ of $f_n^\sharp$.}
			\end{align*}
			In case $t = \bigvee_{k \in \Nat} t_k$ with all $t_k$ in $\T_\Sigma^j V$ we have
			\[
			f_n^\sharp(t) = \bigsqcup_{k \in \Nat} f_n^\sharp(t_k) \sqleq \bigsqcup_{k \in \Nat} f_{n+1}^\sharp(t_k) = f_{n+1}^\sharp(t). 
			\]
			Moreover, for every $k$ we know that $f_n^\sharp(t_k) \sqleq f_n^\sharp(t_{k+1})$ because $f_n^\sharp(t)$ is defined by induction hypothesis.
			It follows that for each $k$
			\[
			f^\sharp(t_k) = \bigsqcup_{n \in \Nat} f_n^\sharp(t_k) \sqleq \bigsqcup_{n \in \Nat} f_n^\sharp(t_{k+1}) = f^\sharp(t_{k+1}).
			\]
			This concludes the proof:
			\begin{align*}
				f^\sharp(t) & = \bigsqcup_{k \in \Nat} f^\sharp(t_k) && \text{def.\ of $f^\sharp$} \\
				& = \bigsqcup_{k \in \Nat} \bigsqcup_{n \in \Nat} f_n^\sharp(t_k) \\
				& = \bigsqcup_{n \in \Nat} \bigsqcup_{k \in \Nat} f_n^\sharp(t_k) \\
				& = \bigsqcup_{n \in \Nat} f_n^\sharp(t) && \text{def.\ of $f_n^\sharp(t)$.} 
			\end{align*}
			
		\end{enumerate}
		
	\end{enumerate}
	
\end{proof}

\begin{lemma}
	\label{L:triangle}
	\label{L:tri}
	Consider a commutative triangle
	\[
	\begin{tikzcd}
		& A \ar[rd, "h_2"] \ar[ld, swap, "h_1"] & \\
		B_1 \ar[rr, swap, "p"] & & B_2
	\end{tikzcd}
	\]
	where $h_i$ are morphisms of $\Sigma\text{-}\CPO$ and $p$ is a continuous map.
	If the image of $h_1$ is $\omega$-dense, then $p$ is also a morphism in $\Sigma\text{-}\CPO$.
\end{lemma}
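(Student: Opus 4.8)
The plan is to reduce the statement to showing that $p$ preserves each operation $\sigma \in \Sigma$, since $p$ is already assumed continuous and that is all that is missing for it to be a morphism of $\Sigma\text{-}\CPO$. Fixing an $n$-ary symbol $\sigma$, I would introduce the set
\[
X = \{\, (b_i)_{i<n} \in B_1^n \mid p\bigl(\sigma_{B_1}(b_i)_{i<n}\bigr) = \sigma_{B_2}\bigl(p(b_i)\bigr)_{i<n} \,\}
\]
of all tuples on which $p$ commutes with $\sigma$, and prove $X = B_1^n$. The argument follows the same pattern as Remark~\ref{R:lift}: show that $X$ contains an $\omega$-dense subset and is closed under $\omega$-joins, so that it must be all of $B_1^n$.

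For the $\omega$-dense subset I would take $(h_1[A])^n$ and check the inclusion $(h_1[A])^n \subseteq X$. Given $(a_i)_{i<n}$ in $A^n$ and writing $b_i = h_1(a_i)$, the hypothesis that $h_1$ is a homomorphism gives $\sigma_{B_1}(b_i)_{i<n} = h_1\bigl(\sigma_A(a_i)_{i<n}\bigr)$, so $p\bigl(\sigma_{B_1}(b_i)\bigr) = (p \comp h_1)\bigl(\sigma_A(a_i)\bigr) = h_2\bigl(\sigma_A(a_i)\bigr)$. Using that $h_2$ is a homomorphism and then $p \comp h_1 = h_2$ once more, the right-hand side equals $\sigma_{B_2}(h_2(a_i))_{i<n} = \sigma_{B_2}(p(b_i))_{i<n}$, so $(b_i)_{i<n} \in X$. (This already covers nullary $\sigma$, the tuple being empty.)

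Next I would verify that $X$ is closed under $\omega$-joins in $B_1^n$. For an $\omega$-chain $(b_i^{(k)})_{i<n}$ ($k \in \Nat$) lying in $X$ with coordinatewise join $(b_i)_{i<n}$, continuity of $\sigma_{B_1}$ and of $p$ yields $p(\sigma_{B_1}(b_i)) = \bigsqcup_k p(\sigma_{B_1}(b_i^{(k)}))$; each term of this join equals $\sigma_{B_2}(p(b_i^{(k)}))$ because the tuples lie in $X$; and continuity of $p$ followed by continuity of $\sigma_{B_2}$ recombines $\bigsqcup_k \sigma_{B_2}(p(b_i^{(k)}))$ into $\sigma_{B_2}(p(b_i))$. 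Hence the join lies in $X$.

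To conclude, since $h_1[A]$ is $\omega$-dense in $B_1$, Lemma~\ref{L:dense} makes $(h_1[A])^n$ $\omega$-dense in $B_1^n$; an $\omega$-closed set containing an $\omega$-dense subset is the whole space, so $X = B_1^n$, as $\sigma$ was arbitrary. I do not anticipate a genuine obstacle: the only delicate point is the $\omega$-join computation, where the chain must be handled coordinatewise and each interchange of $p$ with a join justified by continuity — but this is precisely the mechanism already exploited in Remark~\ref{R:lift}.
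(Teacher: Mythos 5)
Your proposal is correct and is essentially the paper's own argument: the paper stacks the same three-row diagram, notes that the two continuous composites $B_1^n \to B_2$ agree on the $\omega$-dense image of $h_1^n$ (via Lemma~\ref{L:dense}), and concludes they coincide — which is exactly your agreement-set $X$ unpacked. The only difference is that you spell out the closure of $X$ under $\omega$-joins explicitly, which the paper leaves implicit.
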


\begin{proof}
	Since $h_1$ and $p \comp h_1$ are homomorphisms, for every $n$-ary symbol $\sigma$ the upper square and the outward rectangle in the following diagram
	\[
	\begin{tikzcd}
		A^n \ar[r, "\sigma_A"] \ar[d, swap, "h_1^n"] & A \ar[d, "h_1"] \\
		B_1^n \ar[r, "\sigma_{B_1}"] \ar[d, swap, "p^n"] & B_1 \ar[d, "p"] \\
		B_2^n \ar[r, swap, "\sigma_{B_2}"] & B_2
	\end{tikzcd}
	\]
	both commute.
	The image of $h_1^n$ is $\omega$-dense by Lemma~\ref{L:dense}.
	Since $h_1^n$ merges the two paths of the lower square (which are continuous maps), it follows that the lower square commutes, as desired.
\end{proof}

We now prove that the forgetful functor $U_\Sigma : \Sigma\text{-}\CPO \to \CPO$ creates filtered colimits.
That is: given a directed diagram $D: \D\to \Sigma\text{-}\CPO$ with a colimit of $U_\Sigma D$ given by $c_d : U_\Sigma Dd \to C$, there is a unique structure $A$ of a continuous algebra with $U_\Sigma A = C$ turning all the maps $c_d$ into homomorphisms.
Moreover, $A = \colim D$ w.r.t.\ the cocone $(c_d)$.

\begin{theorem}
	\label{T:create}
	\label{T:tri}
	The forgetful functor $U_\Sigma : \Sigma\text{-}\CPO \to \CPO$ creates filtered colimits.
\end{theorem}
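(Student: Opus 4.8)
The plan is to equip the colimit object $C$ with a continuous algebra structure one operation at a time, exploiting that in $\CPO$ filtered colimits commute with finite products, and then to verify the universal property in $\Sigma\text{-}\CPO$ by transporting the homomorphism equations across the colimit cocone.

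First I would fix an $n$-ary symbol $\sigma \in \Sigma$. Since $\CPO$ is cartesian closed, filtered colimits are sifted (Proposition~\ref{P:fil}) and hence commute with finite products; thus $C^n = \colim_d (U_\Sigma Dd)^n$ with colimit cocone $(c_d^n)$. The operations $\sigma_{Dd} : (U_\Sigma Dd)^n \to U_\Sigma Dd$ are compatible with the connecting maps of $D$ (these are homomorphisms), so the continuous maps $c_d \comp \sigma_{Dd}$ form a cocone over $(U_\Sigma D)^n$. The universal property of $C^n$ then yields a unique continuous map $\sigma_A : C^n \to C$ with $\sigma_A \comp c_d^n = c_d \comp \sigma_{Dd}$ for every $d$. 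Declaring these to be the operations equips $C$ with a continuous algebra $A$, and the defining equations say precisely that each $c_d$ is a homomorphism. Uniqueness of the structure is immediate: any operations making all $c_d$ homomorphisms satisfy the same equations, and the cocone $(c_d^n)$, being a colimit cocone, is jointly epimorphic.

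It then remains to check that $(c_d) : Dd \to A$ is a colimit cocone in $\Sigma\text{-}\CPO$. Given any cocone $(h_d) : Dd \to B$ of homomorphisms, the underlying colimit in $\CPO$ provides a unique continuous map $h : C \to U_\Sigma B$ with $h \comp c_d = U_\Sigma h_d$; as it is already unique on underlying objects, I only need to show $h$ is a homomorphism. For this I would compare the two continuous maps $h \comp \sigma_A$ and $\sigma_B \comp h^n$ from $C^n$ to $U_\Sigma B$. Precomposing with $c_d^n$ gives $h \comp \sigma_A \comp c_d^n = h_d \comp \sigma_{Dd}$ and $\sigma_B \comp h^n \comp c_d^n = \sigma_B \comp h_d^n = h_d \comp \sigma_{Dd}$, using that $h_d$ is a homomorphism. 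Since $(c_d^n)$ is the colimit cocone of $C^n$, the two maps coincide, so $h$ preserves $\sigma$. Equivalently, one may invoke Lemma~\ref{L:triangle} together with the $\omega$-density of the union of images $\bigcup_d c_d[Dd]$ in $C$ (Lemma~\ref{L:colim-d}) and of its $n$-th power in $C^n$ (Lemma~\ref{L:dense}).

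The step I expect to be the main obstacle is the transport of structure in the cpo setting: unlike in ordinary universal algebra, the underlying filtered colimit $C$ is not merely the union of the images $c_d[Dd]$ but is obtained by closing that union under $\omega$-joins (compare Example~\ref{E:N}, where the finite posets $C_k$ have $\Pos$-colimit $\Nat$ but $\CPO$-colimit $\Nat^\top$). Consequently both the operations and the homomorphism property of $h$ must be extended from the $\omega$-dense union to all of $C$, and the legitimacy of this extension rests exactly on filtered colimits commuting with finite products (equivalently, on continuity together with $\omega$-density). Everything else is a routine naturality and uniqueness check.
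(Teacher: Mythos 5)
Your proposal is correct and follows essentially the same route as the paper: construct $\sigma_A$ via the colimit cocone $(c_d^n)$ of $C^n=\colim (U_\Sigma Dd)^n$ (which exists because filtered colimits commute with finite products, Proposition~\ref{P:fil}), and then transport the homomorphism equation for the induced map $h$ across that cocone. Your primary argument for $h$ being a homomorphism — joint epimorphy of the colimit cocone $(c_d^n)$ — is a slight streamlining of the paper's appeal to Lemma~\ref{L:triangle} via $\omega$-density of the union of images, and you correctly note the two are interchangeable here.
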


\begin{proof}
	As in the previous proof we work with directed colimits only.
	Let $\sigma \in \Sigma$ be an $n$-ary operation symbol.
	From Proposition~\ref{P:fil} we know that $D^n$ has the colimit $c_d^n : U_\Sigma (Dd)^n \to C^n$.
	All the composites $c_d \comp \sigma_{Dd} : U_\Sigma (Dd)^n \to C$ form a cocone of $D$.
	Indeed, every morphism $\delta : d_1 \to d_2$ of $\D$ yields a homomorphism $D\delta : Dd_1 \to Dd_2$, thus the square below commutes:
	\[
	\begin{tikzcd}
		{U_\Sigma (Dd_1)^n} \ar[r, "U_\Sigma(D\delta)^n"] \ar[d, swap, "\sigma_{Dd_1}"] & {U_\Sigma (Dd_2)^n} \ar[d, "\sigma_{Dd_2}"] \\
		{U_\Sigma Dd_1} \ar[r, swap, "U_\Sigma D\delta"] \ar[rd, swap, "c_{d_1}"] & {U_\Sigma Dd_2} \ar[d, "c_{d_2}"] \\
		& C 
	\end{tikzcd}
	\] 
	Therefore there exists a unique morphism $\sigma_A: C^n \to C$ making the following squares commutative for each $d \in \ob \D$:
	\[
	\begin{tikzcd}
		{U_\Sigma Dd} \ar[r, "U_\Sigma Dd"] \ar[d, swap, "c_{d}^n"] & {U_\Sigma Dd_2} \ar[d, "c_{d}"] \\
		C^n \ar[r, swap, "\sigma_A"] & C 
	\end{tikzcd}
	\]
	In other words, a unique algebra structure $A$ on the cpo $C$ is given making each $c_d$ a homomorphism.
	
	To prove that the cocone $(c_d)$ is also a colimit in $\Sigma\text{-}\CPO$, let $B$ be an algebra, and $b_d : Dd \to B$ a cocone of $D$ in $\Sigma\text{-}\CPO$.
	Then $(U_\Sigma b_d)$ is a cocone of $U_\Sigma D$, thus there is a unique continuous map
	\[
	p : C \to U_\Sigma B \text{ with } p \comp U_\Sigma c_d = U_\Sigma b_d \text{ ($d \in \ob \D$).}
	\]
	It remains to verify that  $p : A \to B$ is a homomorphism.
	For that consider the following triangle
	\[
	\begin{tikzcd}
		& {\coprod_{d \in \ob \D} Dd} \ar[ld, swap, "{[c_d]}"] \ar[rd, "{[b_d]}"] & \\
		A \ar[rr, swap, "p"] & & B
	\end{tikzcd}
	\]
	We apply Lemma~\ref{L:triangle}: both $[c_d]$ and $[b_d]$ are continuous homomorphisms.
	Since $(c_d)$ is a colimit cocone, the image of $[c_d]$ is $\omega$-dense (Lemma~\ref{L:colim-d} (2)).
	Thus $p$ is a homomorphism.
	
\end{proof}

\begin{remark}
	\label{R:create}
	\label{R:tri}
	Analogously, $U_\Sigma$ creates reflexive coinserters: given a reflexive pair $f_0,f_1 : B \to B'$ in $\Sigma\text{-}\CPO$ with a coinserter $c : U_\Sigma B' \to C$ in $\CPO$, there exists a unique $\Sigma$-algebra $A$ on $C$ making $c: B' \to C$ a homomorphism.
	Moreover, this is the coinserter of $f_0,f_1 $ in $\Sigma\text{-}\CPO$.
	The proof is completely analogous, using that the endofunctor $(\blank)^n$ of $\CPO$ preserves reflexive coinserters (Theorem~\ref{T:Rci}).
\end{remark}

\begin{notation}
	For every variety $\Vvar$ of continuous algebras we denote by $\Tmon_\Vvar$ the monad of the adjunction $F_\Vvar \adj U_\Vvar$ (Corollary~\ref{P:free}).
\end{notation}

\begin{examples}
	\phantom{phantom}
	\begin{enumerate}
		
		\item For $\Vvar = \Sigma\text{-}\CPO$ this is the strongly finitary monad $\Tmon_\Sigma$ assigning to a cpo $X$ the cpo $T_\Sigma X$ of classical terms over $|X|$ (Example~\ref{E:free}).
		
		\item For the variety $\Vvar$ of continuous monoids we have
		\[
		T_\Vvar X = \coprod_{n \in \Nat} X^n,
		\]
		the monoid of words, with coordinate-wise multiplication and coordinate-wise order on words of the same lenghth.
		This endofunctor is also strongly finitary.
		
	\end{enumerate}
\end{examples}

\begin{theorem}
	\label{T:necessity}
	The monad $\Tmon_\Vvar$ of free algebras is strongly finitary for every variety $\Vvar$ of continuous algebras.
\end{theorem}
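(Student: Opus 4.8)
The plan is to reduce everything, via Proposition~\ref{P:CPO}, to showing that the underlying endofunctor $T_\Vvar = U_\Vvar F_\Vvar$ of $\CPO$ preserves directed (filtered) colimits and reflexive coinserters. The left adjoint $F_\Vvar$ of Corollary~\ref{P:free} is $\CPO$-enriched, hence preserves all weighted colimits, in particular directed colimits and reflexive coinserters (the latter being weighted colimits by Definition~\ref{D:coins}). Thus it suffices to prove that the forgetful functor $U_\Vvar : \Vvar \to \CPO$ preserves directed colimits and reflexive coinserters: if both $F_\Vvar$ and $U_\Vvar$ preserve a colimit of one of these shapes, then so does their composite $T_\Vvar$ (reflexivity of a pair, i.e.\ the common splitting, is preserved by any functor).

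For directed colimits this is immediate. Proposition~\ref{P:D} shows that $\Vvar$ is closed under directed colimits in $\Sigma\text{-}\CPO$, so the full inclusion $J : \Vvar \hookrightarrow \Sigma\text{-}\CPO$ preserves them; and Theorem~\ref{T:create} says that $U_\Sigma$ creates them. As $U_\Vvar = U_\Sigma \comp J$, it follows that $U_\Vvar$ preserves (indeed creates) directed colimits.

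For reflexive coinserters I would argue in the same spirit, the extra ingredient being that $\Vvar$ is closed under reflexive coinserters in $\Sigma\text{-}\CPO$. Granting this, the coinserter of a reflexive pair in $\Vvar$ is, by fullness, computed as in $\Sigma\text{-}\CPO$, and by Remark~\ref{R:create} the latter is created by $U_\Sigma$ on underlying cpos; hence $U_\Vvar$ preserves reflexive coinserters. To establish the closure I would imitate Proposition~\ref{P:D}: by Remark~\ref{R:def}(2) membership in $\Vvar$ is equivalent to definability of a fixed family of extended terms, so it is enough to show that, for a single term $t \in \T_\Sigma V$, the class of algebras in which $t$ is definable is closed under reflexive coinserters. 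Given such a coinserter $c : B' \to C$ with $B' \in \Vvar$, Remark~\ref{R:create} realises its underlying map as the coinserter in $\CPO$, whose image is $\omega$-dense in $C$; one then transfers definability of $t$ from $B'$ to $C$ by structural induction on $t$, controlling the formal joins through $\omega$-density exactly as in the filtered case.

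The main obstacle is this closure step. Unlike in $\Pos$, a coinserter in $\CPO$ need not be surjective (Remark~\ref{R:surj}), so the induction --- and above all the clause for a formal join $t = \bigvee_{k \in \Nat} t_k$ --- must be driven by $\omega$-density of the image of $c$ rather than by surjectivity: one must check that the relevant $\omega$-chains arise and attain their joins inside $C$. Once this is in place, the theorem follows formally from the adjunction $F_\Vvar \adj U_\Vvar$ together with the creation results already established.
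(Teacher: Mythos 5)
Your overall skeleton coincides with the paper's: reduce, via the adjunction $F_\Vvar \adj U_\Vvar$ and Proposition~\ref{P:CPO}, to showing that $U_\Vvar$ preserves filtered colimits and reflexive coinserters, handle the filtered case by Proposition~\ref{P:D} plus Theorem~\ref{T:create}, and handle coinserters via Remark~\ref{R:create} plus a closure property of $\Vvar$ in $\Sigma\text{-}\CPO$. The one genuine divergence is exactly the step you flag as ``the main obstacle'': you set out to prove that $\Vvar$ is closed under \emph{all} reflexive coinserters in $\Sigma\text{-}\CPO$, and you only sketch how the $\omega$-density argument would go. The paper avoids this entirely. By Corollary~\ref{C:cor} (which rests on Remark~\ref{R:surj}: all coinserters used in building $\CPO$ from $\Set_\fin$ are surjective), an endofunctor of $\CPO$ is strongly finitary as soon as it preserves directed colimits and reflexive \emph{surjective} coinserters. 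For a surjective coinserter $c : B' \to C$ the algebra $C$ is a homomorphic image of $B'$, so closure of $\Vvar$ under these coinserters is an immediate consequence of closure under homomorphic images (Proposition~\ref{P:HSP}) --- no new induction on terms is needed. Your stronger closure claim does appear provable by replaying the machinery of Proposition~\ref{P:D} with the $\omega$-dense image of $c$ (Lemmas~\ref{L:colim-d} and~\ref{L:dense}, Lemma~\ref{L:inter} for lifting interpretations through $c$) in place of the union of images in a directed colimit, so your route is not wrong; but as written the decisive step is only a plan, and it buys nothing that the weaker, already-available statement does not. If you restrict attention to surjective reflexive coinserters from the start, the entire coinserter half of your argument collapses to one sentence.
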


\begin{proof}
	We prove that the functor $T_\Vvar = U_\Vvar F_\Vvar$ preserves filtered colimits and reflexive, surjective coinserters (see Corollary~\ref{C:cor}).
	
	Due to the adjunction $F_\Vvar \adj U_\Vvar$ the functor $F_\Vvar$ preserves colimits, thus, we just need to show that $U_\Vvar$ preserves directed colimits and reflexive, surjective coinserters (Remark~\ref{R:surj}).
	We use the commutative triangle, where $E$ denotes the embedding:
	\[
	\begin{tikzcd}
		\Vvar \ar[r, "E"] \ar[rd, swap, "U_\Vvar"] & \Sigma\text{-}\CPO \ar[d, "U_\Sigma"] \\
		& \CPO
	\end{tikzcd}
	\]
	We know that $U_\Sigma$ preserves filtered colimits (Theorem~\ref{T:create}) and reflexive coinserters (Remark~\ref{R:create}).
	Since $\Vvar$ is closed in $\Sigma\text{-}\CPO$ under filtered colimits (Proposition~\ref{P:D}), $U_\Vvar$ preserves filtered colimits.
	Since $\Vvar$ is closed under homomorphic images (Proposition~\ref{P:HSP}), it is also closed under reflexive surjective coinserters in $\Sigma\text{-}\CPO$.
	Hence, $U_\Vvar$ preserves those coinserters.

\end{proof}

\section{Varieties of $\Delta$-Continuous Algebras}

Analogously to the preceding section, we introduce varieties of $\Delta$-continuous algebras, and prove that they have the form $\DCPO^\Tmon$ for strongly finitary monads $\Tmon$.
We again use the symbol $\bigvee$ for formal joins of terms.
But here they are also formed for ordinals larger than $\omega$.

Throughout this section 'category' means one enriched over $\DCPO$.
'Functor' means enriched functor, i.e.. a locally $\Delta$-continuous one (see Appendix).
We again assume a finitary signature $\Sigma$ and a countable set $V$ of variables are chosen.

\begin{definition}
A \emph{$\Delta$-continuous algebra} is an algebra acting on a dcpo $A$ with all operations $\Delta$-continuous.

We denote by $\Sigma\text{-}\DCPO$ the category of $\Delta$-continuous algebras and $\Delta$-continuous homomorphisms.
\end{definition}

\begin{example}[Free algebras]
	The description of free algebras in Example~\ref{E:free} applies to $\Sigma\text{-}\DCPO$ without changes.
	We again obtain $T_\Sigma P$ as a coproduct of powers $P^r$ (cf.\ Remark~\ref{R:sigma}) which proves that the underlying poset is a dcpo for each dcpo $P$.
	Therefore the free-algebra monad $\Tmon_\Sigma$ on $\DCPO$ is strongly finitary since $T_\Sigma$ is a coproduct of finite-power functors $(\blank)^n$.
\end{example}

The main modification we need when switching from $\CPO$ to $\DCPO$ is the definition of (extended) terms.
We recall here an important reduction of directed joins to joins of \emph{chains} in posets $P$ (which are monotone maps from ordinal numbers $\alpha = \{ i \in \Ord \mid i < \alpha \}$ to $P$):

\begin{theorem}[\cite{adamek+rosicky}, Corollary~1.7]
	A poset is a dcpo iff it has joins of chains.
	A function between dcpos is $\Delta$-continuous iff it preserves joins of chains.
\end{theorem}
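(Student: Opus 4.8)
Both equivalences are classical (this is the Iwamura--Markowsky theorem), and both nontrivial implications rest on a single combinatorial decomposition, so the plan is to isolate that decomposition and then run two parallel transfinite inductions. The forward directions are immediate: every chain is in particular directed, so a dcpo has joins of chains and a $\Delta$-continuous map preserves them. For the converses I would first note that preserving joins of chains forces monotonicity: given $x \sqleq y$ in $P$, the two-element chain $x \sqleq y$ has join $y$, and preservation makes $f(y)$ a least upper bound of $\{ f(x), f(y) \}$, whence $f(x) \sqleq f(y)$.

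The engine is the decomposition lemma: \emph{every infinite directed set $D$ with $|D| = \kappa$ is the union $D = \bigcup_{\xi < \kappa} D_\xi$ of an increasing (under inclusion) chain of directed subsets $D_\xi$ with $|D_\xi| < \kappa$.} Granting it, I would prove the first assertion by transfinite induction on $\kappa = |D|$. The base case is trivial, since a finite directed set has a greatest element and hence a join. For the step I apply the lemma; the induction hypothesis gives a join $s_\xi = \bigsqcup D_\xi$ for each $\xi$, these $s_\xi$ form a chain (as $D_\xi \subseteq D_{\xi'}$ for $\xi \le \xi'$), and the hypothesis on chains supplies $s = \bigsqcup_{\xi} s_\xi$. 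A cofinality check finishes it: each $d \in D$ lies in some $D_\xi$, so $d \sqleq s_\xi \sqleq s$; and any upper bound of $D$ bounds every $D_\xi$, hence every $s_\xi$, hence $s$. Thus $s = \bigsqcup D$ and $P$ is a dcpo.

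The second assertion follows by the same induction carried for $f$, whose monotonicity is already in hand. Given directed $D$ with the decomposition above and $s = \bigsqcup D$, the induction hypothesis yields $f(s_\xi) = \bigsqcup f[D_\xi]$ for each $\xi$; since $(s_\xi)$ is a chain and $f$ preserves chain-joins, $f(s) = \bigsqcup_{\xi} f(s_\xi) = \bigsqcup_{\xi} \bigsqcup f[D_\xi]$, and an iterated-join computation --- legitimate because the codomain is a dcpo and $f$ is monotone, so each $f[D_\xi]$ is directed and the $f[D_\xi]$ increase to $f[D]$ --- identifies this with $\bigsqcup f[D]$. Hence $f$ preserves every directed join, i.e.\ is $\Delta$-continuous.

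The main obstacle is the decomposition lemma itself, which is set-theoretic rather than order-theoretic. The idea is to enumerate $D = \{ d_\eta \mid \eta < \kappa \}$ and build $D_\xi$ by recursion so that $D_\xi \supseteq \{ d_\eta \mid \eta < \xi \} \cup \bigcup_{\zeta < \xi} D_\zeta$, is directed, and has size $< \kappa$. The delicate point is closing a given subset to a directed one of controlled size: for a finite subset it suffices to adjoin a single upper bound, which is then a maximum, while for an infinite subset of size $\lambda$ one adjoins, for each finite subset of the set built so far, a common upper bound chosen inside $D$, and iterates $\omega$ times. When $\kappa$ is regular this keeps the cardinality strictly below $\kappa$ at every stage $\xi < \kappa$ --- a union of $|\xi| < \kappa$ pieces each of size $< \kappa$ stays of size $< \kappa$ by regularity --- and singular $\kappa$ reduce to the regular case by a standard cofinality splitting. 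Once this bookkeeping is discharged, both halves of the theorem drop out of the two inductions above.
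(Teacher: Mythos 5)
The paper offers no proof of this statement---it is imported verbatim as Corollary~1.7 of the cited Ad\'amek--Rosick\'y book---and your argument is precisely the classical Iwamura--Markowsky decomposition on which that result rests, so it is correct and matches the source's approach. The only remark worth making is that your regular/singular case split in the decomposition lemma is avoidable: maintaining the sharper bound $|D_\xi| \le |\xi| + \aleph_0$ through the recursion keeps every stage below $\kappa$ for \emph{any} infinite $\kappa$, with no appeal to cofinality.
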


\begin{notation}
For a signature $\Sigma$ of $\alpha$ operations we put
\[
\| \Sigma \| = 2^{\alpha + \aleph_0}.
\]
\end{notation}

\begin{proposition}[\cite{N}, Proposition~1]
\label{P:fingen}
Every $\Delta$-continuous algebra $A$ which is finitely generated has power at most $\| \Sigma \|$.
(Finite generation means that for some finite subset $X$ the only $\Delta$-continuous subalgebra containing $X$ is all of $A$.)
\end{proposition}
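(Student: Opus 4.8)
The plan is to bound the cardinality of a finitely generated $\Delta$-continuous algebra by building it up from a finite generating set through transfinite closure under the operations and under directed (equivalently, chain) joins, controlling the cardinality at each stage. Fix a finite generating set $X \subseteq A$ of size $m$. I would define an increasing transfinite chain of subsets $X = A_0 \subseteq A_1 \subseteq \cdots \subseteq A_j \subseteq \cdots$ of $|A|$, where at successor stages I close off under all the operations of $\Sigma$ and under joins of chains, and at limit stages I take unions. The key cardinal-arithmetic observation is that closing a set $S$ under the operations adds at most $|\Sigma| \cdot |S|^{<\omega}$ new elements (finitely many arguments per operation, $\alpha$ operations), while closing under joins of chains adds at most as many elements as there are chains in $S$, i.e.\ at most $|S|^{\aleph_0}$ many. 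Thus each successor stage sends a set of size $\kappa$ to one of size at most $|\Sigma| \cdot \kappa^{\aleph_0}$, which for $\kappa \leq \|\Sigma\| = 2^{\alpha + \aleph_0}$ stays bounded by $\|\Sigma\|$, since $(2^{\alpha+\aleph_0})^{\aleph_0} = 2^{(\alpha+\aleph_0)\cdot\aleph_0} = 2^{\alpha+\aleph_0} = \|\Sigma\|$.

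Next I would argue that the process stabilizes at a bounded ordinal. Because closure under chain-joins only needs chains, and every chain in a set of size $\leq \|\Sigma\|$ has length bounded by the successor cardinal $\|\Sigma\|^+$, the chain of stages must close by stage $\|\Sigma\|^+$ at the latest; in fact a cofinality argument shows it closes much earlier. Let $B = \bigcup_j A_j$ be the stabilized set. By construction $B$ is closed under every operation $\sigma \in \Sigma$ and under joins of all chains, so by the cited Corollary~1.7 of~\cite{adamek+rosicky} it is a $\Delta$-continuous subalgebra of $A$; moreover it contains $X$. By the finite-generation hypothesis, the only $\Delta$-continuous subalgebra containing $X$ is $A$ itself, whence $B = A$.

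Finally I would collect the cardinality bounds: each $A_j$ has size at most $\|\Sigma\|$ by the invariant maintained across the transfinite recursion (using that $\|\Sigma\|^{\aleph_0} = \|\Sigma\|$ at successors and that unions of at most $\|\Sigma\|^+$ many sets each of size $\leq \|\Sigma\|$ are handled by keeping the number of stages bounded and appealing to $\|\Sigma\| \cdot \|\Sigma\| = \|\Sigma\|$), so $|A| = |B| \leq \|\Sigma\|$ as claimed.

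The main obstacle I anticipate is \emph{controlling the length of the recursion} so that the union stage does not blow up the cardinality. Closing under operations is cheap and cardinal-arithmetically transparent, but closing under chain-joins is subtle: a freshly added join may itself be the bottom of a new longer chain whose join must then be added, so one must verify that this stabilizes before the cumulative number of stages exceeds $\|\Sigma\|$ many. The clean way to handle this is to observe that it suffices to close under joins of chains whose order type is a \emph{regular} cardinal $\leq \|\Sigma\|$ (since a join of an arbitrary chain coincides with the join of a cofinal well-ordered subchain of regular-cardinal length), and then to bound the number of such joins by $\|\Sigma\|^{\aleph_0} = \|\Sigma\|$; the rest is routine cardinal arithmetic.
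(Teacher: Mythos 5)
The paper gives no proof of this proposition (it is quoted from Nelson's \emph{Z-continuous algebras}), so I can only judge your argument on its own terms; and as it stands it has a genuine gap in the cardinal arithmetic. The step ``closing under joins of chains adds at most $|S|^{\aleph_0}$ many elements'' is false: a chain in a set $S$ of size $\kappa$ may have any order type of cardinality up to $\kappa$, so the set of chains has size up to $2^{\kappa}$, not $\kappa^{\aleph_0}$. Your proposed repair --- restricting to chains whose order type is a regular cardinal $\lambda\leq\|\Sigma\|$ --- does not help, because $\lambda$ ranges over \emph{all} regular cardinals up to $\kappa$, and the number of such chains is $\sup_{\lambda}\kappa^{\lambda}$, which is again $2^{\kappa}$ rather than $\kappa^{\aleph_0}$ (already $\kappa^{\aleph_1}$ can exceed $\kappa^{\aleph_0}$). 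So the invariant ``each stage has size $\leq\|\Sigma\|$'' is not established, and the second difficulty you flag (that the recursion might run for $\|\Sigma\|^{+}$ stages, whose union is too big) is acknowledged but never actually resolved; the ``cofinality argument'' is not supplied and cannot be, by this route, without the per-stage bound.

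The standard way to prove the proposition avoids stage-by-stage counting altogether. First close $X$ under the operations only, obtaining a sub-$\Sigma$-algebra $B$ with $|B|\leq\alpha+\aleph_0$ (it is a quotient of a set of classical terms). Next show that the closure $\ol{B}$ of $B$ under directed joins is automatically closed under the operations: this is exactly the argument of Remark~\ref{R:lift} in the paper, using that $\ol{B}^{\,n}$ is the directed-join closure of $B^{n}$ and that each $\sigma_A$ is $\Delta$-continuous. Hence $\ol{B}=A$ by finite generation. Finally, bound $|\ol{B}|$ not by counting chains but by an injectivity argument: a transfinite induction over the closure shows that every $c\in\ol{B}$ is the \emph{least} upper bound of the set $\mathord{\downarrow}c\cap B$ (if $c=\bigsqcup D$ with $D$ directed in an earlier stage, any upper bound of $\mathord{\downarrow}c\cap B$ dominates each $y\in D$ by the induction hypothesis applied to $\mathord{\downarrow}y\cap B$, hence dominates $c$). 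Therefore $c\mapsto\mathord{\downarrow}c\cap B$ is injective on $\ol{B}$, giving $|A|=|\ol{B}|\leq 2^{|B|}\leq 2^{\alpha+\aleph_0}=\|\Sigma\|$. This is the missing idea in your proposal: different chains may be legion, but their joins are separated by their traces on $B$.
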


\begin{definition}
	The set $\T_\Sigma^\Delta V$ of \emph{($\Delta$-extended) terms} is the smallest set such that
	\begin{enumerate}
		
		\item Every variable in $V$ is a term.
		
		\item Every $n$-ary symbol $\sigma$ and every $n$-tuple $(t_i)_{i<n}$ of terms yields a composite term $\sigma(t_i)_{i<n}$.
		
		\item Given an ordinal $\alpha \leq \| \Sigma \|$, every collection $t_k$ ($k < \alpha$) of terms, all of which contain only finitely many variables, yields a term
		\[
		t = \bigvee_{k < \alpha} t_k.
		\]
		
	\end{enumerate}
\end{definition}

The definition of interpretation of terms in an algebra $A$ is completely analogous to Definition~\ref{D:inter}: in item (3) $f^\sharp$ is defined in $t = \bigvee_{k < \alpha} t_k$ iff each $f^\sharp(t_k)$ is defined and for all $k \leq l < \alpha$ we have $f^\sharp(k) \leq f^\sharp(l)$.
Then $f^\sharp(t) =\bigsqcup_{k < \alpha} f^\sharp(t_k)$.

\begin{remark}
\label{R:set}
There exist at most $\| \Sigma \|$ terms.
Indeed, since each term contains only finitely many variables, it is easy to see that $\T_\Sigma^\Delta V = \bigcup_{i < \| \Sigma \|} W_i$ where $W_0 = V$ and $W_{i+1}$ is the closure of $W_i$ under composite terms and formal joins $\bigvee_{k < \alpha} t_k$ for $\alpha \leq \| \Sigma \|$.
Moreover, an easy induction on $i$ shows $\card W_i \leq \| \Sigma \|$ for each $i$.
Thus $\card \T_\Sigma^\Delta V \leq \| \Sigma \|^2 = \| \Sigma \|$.
\end{remark}

\begin{definition}
A \emph{variety} of $\Delta$-continuous algebras is a full subcategory $\Vvar$ of $\Sigma\text{-}\DCPO$ presented by a set of equations $t = t'$ between terms: an algebra $A$ lies in $\Vvar$ iff for each of those equations and each interpretation $f : V \to A$ both $f^\sharp(t)$ and $f^\sharp(t')$ are defined and are equal.
\end{definition}

\begin{remark}
The concepts of product, homomorphic image and subalgebra are defined for $\Delta$-continuous algebras precisely as in Remark~\ref{R:HSP}.
\end{remark}

\begin{proposition}
	Every variety of $\Delta$-continuous algebras is closed under homomorphic images, subalgebras, products, and directed colimits in $\Sigma\text{-}\DCPO$.
\end{proposition}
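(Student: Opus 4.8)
The plan is to follow the two-step strategy of the continuous case---Propositions~\ref{P:HSP} and~\ref{P:D}---essentially verbatim, the only genuinely new ingredient being the treatment of transfinite formal joins $\bigvee_{k<\alpha}t_k$. First I would reduce all four closure properties to a single statement about \emph{definability}. The $\Delta$-analogue of Remark~\ref{R:def}(2) holds: an equation $t=t'$ is equivalent to definability of the term $s=\bigvee_{k<2}s_k$ with $s_0=t$ and $s_1=t'$ (and $2\le\|\Sigma\|$, so $s$ is a legitimate $\Delta$-term). Hence a variety presented by a set of equations is exactly the intersection of the classes
\[
\A_t=\{\,A \mid t \text{ is definable in } A\,\},
\]
one for each term $s$ coming from a defining equation. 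Since each of H, S, P and directed colimits is preserved under intersection of classes, it suffices to prove that every such $\A_t$ is closed under all four operations.

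Closure of $\A_t$ under products, subalgebras and homomorphic images is obtained exactly as in Proposition~\ref{P:HSP}: the argument of~\cite{anr} is given there for terms with ordinal joins and no bound on the number of variables, so it applies a fortiori to our more restrictive $\Delta$-terms. Concretely, a routine structural induction on $t$ shows definability to pass to products (computed coordinatewise), to subalgebras (an embedding represents a sub-dcpo, hence reflects the order and is closed under joins of chains), and to homomorphic images (lift an interpretation $f\colon V\to B$ along a surjection $e\colon A\to B$ and apply the $\Delta$-analogue of Lemma~\ref{L:inter}, giving $f^\sharp(t)=e\bigl(\tilde f^\sharp(t)\bigr)$).

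The substantial part is closure of $\A_t$ under directed colimits, which I would prove by transfinite structural induction mirroring Proposition~\ref{P:D}. Given a directed colimit $a_s\colon A_s\to A$ in $\Sigma\text{-}\DCPO$ with $t$ definable in every $A_s$, I fix the finite set $V_0$ of variables of $t$ and an interpretation $f\colon V_0\to A$. The union $X=\bigcup_s a_s[A_s]$ is dense in $A$ with respect to joins of chains (the $\Delta$-analogue of Lemma~\ref{L:colim-d}), and this density is inherited by the finite power $[V_0,A]$ (the $\Delta$-analogue of Lemma~\ref{L:dense}). Using that a poset is a dcpo iff it has joins of chains (\cite{adamek+rosicky}, Corollary~1.7), I would then establish that $f^\sharp(t)$ is defined by an outer induction over how $f$ is built from $X$ by chain-joins: the base case $f\colon V_0\to X$ factors as $f=a_s\comp g$ for some $s$ (since $V_0$ is finite and $X$ is directed), so $g^\sharp(t)$ is defined and $f^\sharp(t)=a_s(g^\sharp(t))$; the chain step requires the continuity identity $f^\sharp(t)=\bigsqcup_i f_i^\sharp(t)$ for $f=\bigsqcup_i f_i$, proved by a second, inner transfinite induction over the term-hierarchy $W_i$ of Remark~\ref{R:set}.

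The main obstacle is precisely this inner induction at a formal-join term $t=\bigvee_{k<\alpha}t_k$: one must interchange the chain-join $\bigsqcup_i$ coming from the approximation of $f$ with the length-$\alpha$ formal join $\bigsqcup_{k<\alpha}$, i.e.\ justify $\bigsqcup_i\bigsqcup_{k<\alpha}f_i^\sharp(t_k)=\bigsqcup_{k<\alpha}\bigsqcup_i f_i^\sharp(t_k)$, while simultaneously checking that the comparability conditions $f^\sharp(t_k)\sqleq f^\sharp(t_l)$ (for $k\le l<\alpha$) that make these joins defined survive the passage to the limit. In the $\CPO$ case all joins were countable and interchanging $\omega$-joins sufficed; here $\alpha$ ranges over all ordinals up to $\|\Sigma\|$, so the commutation has to be argued for arbitrary directed joins and the term-hierarchy $W_i$ must be traversed up to $\|\Sigma\|$ rather than $\omega_1$. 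Once this interchange is secured, the remaining cases (variables and composite terms $\sigma(t_i)_{i<n}$) are formally identical to those in the proof of Proposition~\ref{P:D}.
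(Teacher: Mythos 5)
Your proposal is correct and follows essentially the same route as the paper's proof: reduce everything to definability of single terms, invoke the HSP argument of~\cite{anr} for products, subalgebras and homomorphic images, and rerun the directed-colimit argument of Proposition~\ref{P:D} with formal joins indexed by ordinals $\alpha \leq \| \Sigma \|$, the key new point being the interchange of the two transfinite joins. The only discrepancy is cosmetic: the paper runs the inner induction on the term hierarchy up to $\| \Sigma \|^+$ steps rather than $\| \Sigma \|$, which is the safer bound since a formal join of length $\| \Sigma \|$ of terms appearing cofinally below level $\| \Sigma \|$ need not occur before level $\| \Sigma \| + 1$.
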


\begin{proof}
	For the first three closure properties we can, analogously to Proposition~\ref{P:HSP}, use the proof in~\cite{anr}.
	The proof for directed colimits is a modification of that of Proposition~\ref{P:D}.
	Whereas in that proof we used structural induction on $t = \bigvee_{k \in \Nat} t_k$, we now need to use it on $t = \bigvee_{k < \alpha} t_k$ for an arbitrary ordinal $\alpha \leq \| \Sigma \|$.
	Since all $t_k$ lie in $\T_\Sigma^\Delta V_0$ for a finite set $V_0 \subseteq V$, the proof proceeds completely analogously to that of Proposition~\ref{P:D}.
	The transfinite induction for subsets $(\T_\Sigma^\Delta)^j \subseteq \T_\Sigma^\Delta V$ is analogous to that in item (ii). In our case it does not end in $\omega_1$ steps, but in $\| \Sigma \|^+$ steps.

\end{proof}

\begin{theorem}
	The forgetful functor of $\Sigma\text{-}\DCPO$ creates filtered colimits and reflexive coinserters.
\end{theorem}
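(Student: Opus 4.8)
The plan is to imitate the proofs of Theorem~\ref{T:create} and Remark~\ref{R:create} for $\CPO$, replacing $\omega$-density everywhere by density under directed joins. By the characterization recalled above (a poset is a dcpo iff it has joins of chains, and $\Delta$-continuous maps are precisely those preserving joins of chains), it suffices to treat directed colimits; filtered colimits then follow exactly as in the $\CPO$ case.

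First I would construct the lifted algebra structure. Let $D : \D \to \Sigma\text{-}\DCPO$ be a directed diagram and let $c_d : U_\Sigma Dd \to C$ be a colimit of $U_\Sigma D$ in $\DCPO$. For each $n$-ary $\sigma \in \Sigma$, Proposition~\ref{P:fil} gives that $(\blank)^n$ preserves directed colimits (as $\DCPO$ is cartesian closed), so $c_d^n : U_\Sigma (Dd)^n \to C^n$ is again a colimit. The composites $c_d \comp \sigma_{Dd}$ form a cocone of $D^n$, exactly as in Theorem~\ref{T:create}, and hence induce a unique $\sigma_A : C^n \to C$ making every $c_d$ a homomorphism; this is the unique $\Sigma$-algebra structure $A$ on $C$ with that property.

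Next I would verify that $(c_d)$ is a colimit in $\Sigma\text{-}\DCPO$. Given a competing cocone $b_d : Dd \to B$, the colimit in $\DCPO$ yields a unique $\Delta$-continuous map $p : C \to U_\Sigma B$, and to see that $p$ is a homomorphism I would apply the triangle Lemma~\ref{L:triangle} to the pair $[c_d], [b_d] : \coprod_{d} Dd \to A,\, B$, using that the image of $[c_d]$ is $\Delta$-dense (i.e.\ dense under directed joins) by the $\DCPO$ form of Lemma~\ref{L:colim-d}; the triangle lemma needs only that a $\Delta$-dense image forces a continuous map commuting through homomorphisms to be a homomorphism. For reflexive coinserters the argument is identical, except that $(\blank)^n$ now preserves reflexive coinserters by Theorem~\ref{T:Rci} (valid since $\DCPO$ is a basic subcategory of $\Pos$), and the relevant $\Delta$-dense image is that of the coinserter map.

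The main obstacle I anticipate is establishing the density facts in the $\DCPO$ setting: that the image of a directed colimit cocone, and of a reflexive coinserter, is $\Delta$-dense, and that $\Delta$-density passes to finite powers (the analogue of Lemma~\ref{L:dense}). Unlike the $\CPO$ case, closure here must be taken under joins of chains of arbitrary ordinal length rather than under $\omega$-chains, so the transfinite bookkeeping is heavier; however, the reduction of directed joins to joins of chains keeps each inductive step formally identical to its $\CPO$ counterpart, so I expect no genuinely new idea to be required.
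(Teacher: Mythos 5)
Your proposal is correct and follows exactly the route the paper takes: the paper's proof of this theorem is literally the remark that it is ``proved precisely as Theorem~\ref{T:create} and Remark~\ref{R:create}'', with the $\Delta$-density analogues you flag as the main obstacle already supplied in the appendix (Remark~\ref{R:ADCPO} and the reduction of directed joins to joins of chains). No further comment is needed.
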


This is proved precisely as Theorem~\ref{T:create} and Remark~\ref{R:create}.

\begin{theorem}
	Every variety $\Vvar$ of $\Delta$-continuous algebras has free algebras and is concretely isomorphic to $\DCPO^{\Tmon_\Vvar}$ for the corresponding monad $\Tmon_\Vvar$ on $\DCPO$.
\end{theorem}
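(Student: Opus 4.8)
The plan is to follow the $\CPO$ development of the previous section and split the statement into two stages, the existence of free algebras and then the concrete monadicity. \emph{Free algebras.} I would argue exactly as in Corollary~\ref{P:free}. The category $\Sigma\text{-}\DCPO$ already has free algebras, by the coproduct-of-powers description of $T_\Sigma$ recalled at the beginning of this section, so it suffices to show that the inclusion $\Vvar \hookrightarrow \Sigma\text{-}\DCPO$ is reflective. For this I would use the $\DCPO$-analogue of the $(\EE,\MM)$-factorization system: by the preceding closure proposition $\Vvar$ is closed under products and $\MM$-subobjects, and $\Sigma\text{-}\DCPO$ is $\EE$-cowellpowered because $\DCPO$ is. Reflectivity, with reflections in $\EE$, then follows from~\cite{ahs}, Theorem~16.8. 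This produces the adjunction $F_\Vvar \adj U_\Vvar$, hence the monad $\Tmon_\Vvar$ together with the comparison functor $K : \Vvar \to \DCPO^{\Tmon_\Vvar}$ commuting with the forgetful functors into $\DCPO$.

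\emph{Monadicity.} To show that $K$ is a concrete isomorphism I would apply Beck's monadicity theorem (see~\cite{maclane:cwm}, \S VI.7) in its $\DCPO$-enriched form, in which reflexive coinserters play the role of coequalizers of $U$-split pairs. Two hypotheses are immediate: $U_\Vvar$ has the left adjoint $F_\Vvar$ just constructed, and it is conservative, since a homomorphism that is an isomorphism of underlying dcpos is an isomorphism in $\Vvar$. The decisive point is that $U_\Vvar$ creates the reflexive coinserters occurring in the canonical presentations of monad algebras, all of which are surjective (the comparison map is split by the unit of the algebra). Here I would compose two facts proved above: the forgetful functor $U_\Sigma$ of $\Sigma\text{-}\DCPO$ creates filtered colimits and reflexive coinserters (the preceding theorem), and $\Vvar$ is closed in $\Sigma\text{-}\DCPO$ under surjective reflexive coinserters because it is closed under homomorphic images (the preceding closure proposition). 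Together these show that $U_\Vvar$ creates the required coinserters, whence the enriched Beck theorem forces $K$ to be an isomorphism of categories over $\DCPO$; this is precisely the asserted concrete isomorphism $\Vvar \cong \DCPO^{\Tmon_\Vvar}$.

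\emph{Main obstacle.} The genuine difficulty is the enriched form of Beck's theorem rather than any single computation. One must check that reflexive coinserters are the correct $\DCPO$-weighted colimits witnessing monadicity: each $\Tmon_\Vvar$-algebra $(A,a)$ is canonically presented by the reflexive pair $T_\Vvar T_\Vvar A \rightrightarrows T_\Vvar A$ with common splitting $T_\Vvar \eta_A$, and in this order-enriched setting the associated reflexive coequalizer is built from a reflexive coinserter, as used repeatedly above. That the forgetful functor of $\DCPO^{\Tmon_\Vvar}$ creates this coinserter rests on $\Tmon_\Vvar$ being strongly finitary, and hence preserving reflexive coinserters (the $\DCPO$-analogue of Theorem~\ref{T:necessity}); this fixes the class of colimits to which Beck's theorem is applied. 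Granting the $\DCPO$-enriched Beck theorem together with the creation results above, the argument closes, and the passage from a mere equivalence to a strict concrete isomorphism is secured by the fact that $U_\Vvar$ \emph{creates}, rather than merely preserves, these coinserters.
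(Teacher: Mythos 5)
Your first half (free algebras via reflectivity of $\Vvar$ in $\Sigma\text{-}\DCPO$, using closure under products and $\MM$-subobjects, $\EE$-cowellpoweredness, and \cite{ahs}, Theorem~16.8 with the factorization system of Remark~\ref{R:ADCPO}) is exactly the paper's argument, which transports Corollary~\ref{P:free} to $\DCPO$.

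The monadicity half has a genuine gap. The paper's proof (transporting Proposition~\ref{P:strict}) uses the ordinary \emph{strict} Beck theorem (\cite{maclane:cwm}, Theorem~VI.7.1): one shows that $U_\Vvar$ creates coequalizers of $U_\Vvar$-split parallel pairs. Such split coequalizers are absolute colimits, so no preservation property of $\Tmon_\Vvar$ is needed; one first checks creation for $\Sigma\text{-}\DCPO$ exactly as in classical universal algebra, and then descends to $\Vvar$ because a split coequalizer $e$ satisfies $e \comp s = \id$, hence is surjective, and $\Vvar$ is closed under homomorphic images (the $\DCPO$-analogue of Proposition~\ref{P:HSP}). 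You instead invoke a ``$\DCPO$-enriched Beck theorem in which reflexive coinserters play the role of coequalizers of $U$-split pairs.'' No such theorem is proved in the paper or cited, and it is not the right tool: the canonical presentation $T_\Vvar T_\Vvar A \rightrightarrows T_\Vvar A \to A$ of an Eilenberg--Moore algebra is a split \emph{coequalizer} (the algebra axiom is an equality, not an inequality), so no coinserter enters, and strict monadicity requires creation of \emph{all} $U$-split coequalizers, not only the canonical ones. Your route also makes the argument depend on $\Tmon_\Vvar$ preserving reflexive coinserters, i.e.\ on its strong finitarity --- a fact the paper establishes only afterwards (Theorem~\ref{T:sf}) and which is not needed for monadicity precisely because split coequalizers are absolute. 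Replacing the appeal to the unproved enriched Beck theorem by the ordinary strict one and running the split-coequalizer argument of Proposition~\ref{P:strict} verbatim in $\Sigma\text{-}\DCPO$ closes the proof.
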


The proof is analogous to that of Corollary~\ref{P:free} and Proposition~\ref{P:strict}, using the factorization system of Remark~\ref{R:ADCPO} (1).

\begin{theorem}
	\label{T:sf}
	For every variety $\Vvar$ of $\Delta$-continuous algebras the monad $\Tmon_\Vvar$ is strongly finitary.
\end{theorem}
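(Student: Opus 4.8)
The plan is to mirror the proof of Theorem~\ref{T:necessity} essentially verbatim, now in the $\DCPO$-enriched setting. By Corollary~\ref{C:cor} it suffices to show that the underlying endofunctor $T_\Vvar = U_\Vvar F_\Vvar$ of $\DCPO$ preserves directed colimits and reflexive, surjective coinserters. Since $F_\Vvar$ is a left adjoint it preserves all colimits, so the task reduces to proving that the forgetful functor $U_\Vvar \colon \Vvar \to \DCPO$ preserves directed colimits and reflexive surjective coinserters.

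Next I would factor $U_\Vvar$ through the embedding $E \colon \Vvar \hookrightarrow \Sigma\text{-}\DCPO$ followed by $U_\Sigma \colon \Sigma\text{-}\DCPO \to \DCPO$, exactly as in the commutative triangle used for Theorem~\ref{T:necessity}. The theorem above, stating that the forgetful functor of $\Sigma\text{-}\DCPO$ creates filtered colimits and reflexive coinserters, gives that $U_\Sigma$ preserves both. For the directed-colimit part, the earlier proposition of this section shows that every variety $\Vvar$ is closed under directed colimits in $\Sigma\text{-}\DCPO$; composing with $U_\Sigma$ then yields that $U_\Vvar$ preserves directed colimits. For the coinserter part, that same proposition shows $\Vvar$ is closed under homomorphic images, whence it is closed under reflexive \emph{surjective} coinserters in $\Sigma\text{-}\DCPO$; composing again with $U_\Sigma$ yields preservation of reflexive surjective coinserters by $U_\Vvar$.

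I expect no genuine obstacle beyond checking that each auxiliary ingredient from the $\CPO$ case survives the passage to $\DCPO$. The one point that required real work is already dispatched upstream: the closure of $\Vvar$ under directed colimits, whose structural induction now runs over $\| \Sigma \|^+$ stages rather than $\omega_1$, was handled in the earlier proposition. One should also confirm that Corollary~\ref{C:cor} legitimately applies over $\DCPO$; this rests on Remark~\ref{R:surj}(3), which records that the coinserters appearing in the proof of Proposition~\ref{P:DCPO} are surjective, so that the characterization ``strongly finitary $=$ preserves directed colimits and reflexive surjective coinserters'' is valid for endofunctors of $\DCPO$. With these confirmations the argument closes identically to Theorem~\ref{T:necessity}.
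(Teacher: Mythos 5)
Your proposal is correct and follows essentially the same route as the paper, which likewise reduces to the commutative triangle $U_\Vvar = U_\Sigma \comp E$, uses that the forgetful functor of $\Sigma\text{-}\DCPO$ creates filtered colimits and reflexive coinserters, and invokes the closure of $\Vvar$ under directed colimits and homomorphic images before applying the characterization of strongly finitary endofunctors of $\DCPO$. Your explicit appeal to Corollary~\ref{C:cor} together with Remark~\ref{R:surj}(3) to handle the surjectivity of the relevant coinserters is exactly the step the paper's terse proof leaves implicit (it is spelled out in the $\CPO$ case, Theorem~\ref{T:necessity}).
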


\begin{proof}
We first observe that Lemma~\ref{L:tri} works in $\Sigma\text{-}\DCPO$ equally well as in $\Sigma\text{-}\CPO$.
As in Theorem~\ref{T:tri} and Remark~\ref{R:tri} we deduce that the forgetful functor $U_\Sigma : \Sigma\text{-}\DCPO \to \DCPO$ creates filtered colimits and reflexive coinserters.
We derive that $U_\Vvar : \Vvar \to \DCPO$ preserves filtered colimits and reflexive coinserters, and apply Proposition~\ref{P:DCPO} to $T_\Vvar = U_\Vvar \comp F_\Vvar$.
\end{proof}

\section{From Strongly Finitary Monads to Varieties}

Here the main result is proved for $\CPO$ and $\DCPO$: varieties of continuous algebras correspond bijectively to strongly finitary monads.
We begin with $\CPO$.
We again assume that a countable set $V$ of variables $\{ x_k \mid k \in \Nat \}$ is given.
We denote $V_n = \{ x_k \mid k \leq n \}$ and consider it as a discrete cpo.
The underlying set of a cpo $P$ is denoted by $|P|$.
Recall that joins in a concrete poset are denoted by $\bigsqcup$, whereas formal joins (defining extended terms) by $\bigvee$.

Given a monad $\Tmon = (T,\mu,\eta)$ we denote for $f : X \to TY$ by $f^*: TX \to TY$ the corresponding homomorphism of the free algebras for $\Tmon$:
\[
f^* = \mu_Y \comp Tf.
\]
For every morphism $g : Y \to Z$ we then have a commutative triangle
\[
\begin{tikzcd}
	X \ar[r, "f"] \ar[rd, swap, "(g^* \comp f)^*"] & TY \ar[d, "g^*"] \\
	& TZ
\end{tikzcd}
\]

\begin{notation}
	Every $n$-ary operation symbol $\sigma$ is identified with the term $\sigma(x_i)_{i < n}$.
\end{notation}

\begin{definition}
	\label{D:ass}
	Let $\Tmon = (T,\mu,\eta)$ be a strongly finitary monad on $\CPO$.
	The \emph{associated signature} has as $n$-ary symbols the elements of $TV_n$:
	\[
	\Sigma_n = |TV_n| \text{ ($n \in \Nat$).}
	\]
	The \emph{associated variety} $\Vvar_\Tmon$ is presented by the following equations, where $n$ and $m$ range over $\Nat$:
	\begin{enumerate}
		
		\item $\sigma = \bigvee_{k \in \Nat} \sigma_k$ for every $\omega$-chain $(\sigma_k)_{k < \omega}$ in $TV_n$ with $\sigma = \bigsqcup_{k \in \Nat} \sigma_k$.
		
		\item $u^*(\sigma) = \sigma(u(x_i))_{i<n}$ for every $\sigma \in |TV_n|$ and all maps $u : V_n \to |TV_m|$.
		
		\item $\eta_{V_n}(x_i) = x_i$ for all $i < n$.
		
	\end{enumerate}
\end{definition}

\begin{remark}
Every algebra
\[
\alpha: TA \to A
\]
in $\CPO^\Tmon$ defines a $\Sigma$-algebra as follows: given $\sigma \in \Sigma_n$ and an $n$-tuple $(a_i)$ in $A$, represented by a map $a : V_n \to A$ (taking $x_i$ to $a_i$), we put
\[
\sigma_A(a_i)_{i<n} = \alpha \comp Ta (\sigma).
\]
Every homomorphism $h : (A, \alpha) \to (B, \beta)$ in $\CPO^\Tmon$ defines a homomorphism between the associated $\Sigma$-algebras.
Indeed, given $\sigma \in TV_n$ and $a: V_n \to A$, the equality $h(\sigma_A(a_i)) = \sigma_B(h(a_i))$ follows from $h \comp \alpha = \beta \comp Th$:
\begin{align*}
	h(\sigma_A(a_i)) & = h \comp \alpha \comp Ta (\sigma) \\
	& = \beta \comp T(h \comp a) (\sigma) \\
	& = \sigma_B(h(a_i)).
\end{align*}
\end{remark}

\begin{theorem}
	\label{T:suff}
	Every strongly finitary monad on $\CPO$ is the free-algebra monad of its associated variety.
\end{theorem}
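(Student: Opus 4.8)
The plan is to exhibit a concrete isomorphism of categories over $\CPO$,
\[
\CPO^\Tmon \;\cong\; \Vvar_\Tmon,
\]
commuting with the two forgetful functors to $\CPO$. Once this is done the theorem follows from general monad theory: a concrete isomorphism between Eilenberg--Moore categories is induced by an isomorphism of the underlying monads, and the free-algebra monad of $\Vvar_\Tmon$ is $\Tmon_{\Vvar_\Tmon}$ (Corollary~\ref{P:free}); hence $\Tmon \cong \Tmon_{\Vvar_\Tmon}$, which is exactly the assertion. One direction of the isomorphism is the functor $\Phi \colon \CPO^\Tmon \to \Sigma\text{-}\CPO$ of the Remark preceding the theorem, sending $(A,\alpha)$ to the $\Sigma$-algebra with $\sigma_A(a_i)_{i<n} = \alpha \comp Ta(\sigma)$; it acts as the identity on underlying cpos and on morphisms, so it is concrete.

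First I would check that $\Phi(A,\alpha)$ actually lies in $\Vvar_\Tmon$, i.e.\ satisfies the equations of Definition~\ref{D:ass}. Equation~(3) follows from the unit law $\alpha \comp \eta_A = \id_A$ together with naturality of $\eta$. Equation~(2) is the associativity law $\alpha \comp \mu_A = \alpha \comp T\alpha$ rewritten along a map $u \colon V_n \to |TV_m|$, using naturality of $\mu$ and the identity $u^* = \mu_{V_m} \comp Tu$. Equation~(1) records that the leg $\tau \mapsto \tau_A(a)$, being $\alpha \comp Ta$, is continuous: since $\alpha$ and $Ta$ are continuous maps of cpos, $\sigma = \bigsqcup_k \sigma_k$ in $TV_n$ forces $\sigma_A(a) = \bigsqcup_k (\sigma_k)_A(a)$, which is precisely what the formal join $\bigvee_k \sigma_k$ asserts (Definition~\ref{D:inter}).

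For the inverse I would use strong finitarity. Since $\CPO = \Sind \Set_\fin$ (Proposition~\ref{P:CPO}) and $T$ preserves sifted colimits (Proposition~\ref{P:endosf}), every cpo $A$ is the canonical sifted colimit of the finite discrete cpos $V_n$ along the maps $a \colon V_n \to A$, and consequently $TA$ is the sifted colimit of the $TV_n$ along the maps $Ta$. Given $B \in \Vvar_\Tmon$, I would show that the family $(\sigma,a) \mapsto \sigma_B(a)$, for $\sigma \in TV_n$ and $a \colon V_n \to B$, is a cocone over this presentation: each leg $\tau \mapsto \tau_B(a)$ is a \emph{continuous} map $TV_n \to B$ precisely because $B$ satisfies the equations~(1), and the cocone is natural in the indexing finite maps precisely because $B$ satisfies~(2) and~(3). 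The induced factorization is a continuous map $\beta \colon TB \to B$ with $\beta \comp Tb(\sigma) = \sigma_B(b)$, and I would verify that $\beta$ is a $\Tmon$-algebra: the unit law is immediate from~(3) and $\eta$, while the associativity law $\beta \comp \mu_B = \beta \comp T\beta$ is checked by applying the same presentation to $TB$ and reducing to the finite generators, where it becomes equation~(2). This defines $\Psi \colon \Vvar_\Tmon \to \CPO^\Tmon$, again the identity on underlying cpos. That $\Phi$ and $\Psi$ are mutually inverse is then formal: $\Phi\Psi = \id$ holds because $\sigma_B(b) = \beta \comp Tb(\sigma)$ by construction of $\beta$, and $\Psi\Phi = \id$ because $\beta$ and the given $\alpha$ agree on the generating cocone $\tau \mapsto \tau_A(a) = \alpha \comp Ta(\tau)$, which determines the map out of the colimit $TA$ uniquely.

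The main obstacle is this inverse direction: making the sifted-colimit presentation of an arbitrary cpo explicit enough that ``cocone leg'' and ``cocone naturality'' translate cleanly into the three families of equations, and then discharging the associativity law for $\beta$, which requires applying the presentation a second time (now to $TB$) and matching the resulting two-fold composite against equation~(2). Continuity of the legs (equation~(1)) must be handled in the enriched, weighted form of the colimit, since the $TV_n$ are genuine cpos rather than discrete objects; this is exactly where it matters that the formal joins in Definition~\ref{D:ass}(1) range over \emph{all} $\omega$-chains of $TV_n$, and not merely over some generating subset.
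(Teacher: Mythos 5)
Your proposal is correct in outline but takes a genuinely different route from the paper. The paper does not construct the concrete isomorphism $\CPO^\Tmon\cong\Vvar_\Tmon$ object by object; instead it verifies the universal property of free algebras on the finite discrete cpos $V_n$ (showing that $\ol{f}(\sigma)=\sigma_A(f(x_i))_{i<n}$ is the unique continuous $\Sigma$-homomorphic extension of $f:V_n\to A$, using exactly the three families of equations as you do for $\Phi$), then extends to all cpos by observing that both $T$ and $T_{\Vvar}$ preserve directed colimits and reflexive coinserters (here Theorem~\ref{T:necessity} is needed for $T_\Vvar$), and finally invokes strict monadicity of both $\Vvar$ and $\CPO^\Tmon$ (Proposition~\ref{P:strict}) to conclude. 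The advantage of that route is that Beck's theorem does the work of transporting algebra structures to arbitrary objects of the variety, so one never has to build the structure map $\beta:TB\to B$ by hand. Your route buys independence from Theorem~\ref{T:necessity}, but at the cost of the inverse functor $\Psi$, whose associativity check is the one place your sketch needs sharpening: presenting $TB$ as $\Colim{\CPO(K\blank,TB)}{K}$ gives legs $Tg$ for \emph{arbitrary} $g:V_m\to TB$, and such a $g$ need not factor as $Tb\comp u$ with $u:V_m\to TV_n$, so equation~(2) does not apply directly. The clean fix is a two-fold reduction: first use $TB=\Colim{\CPO(K\blank,B)}{TK}$ to reduce $\beta\comp\mu_B=\beta\comp T\beta$ on $TTB$ to the legs $TTb$ ($b:V_n\to B$), then use the canonical presentation of the cpo $TV_n$ itself to reduce further to legs $Tu$ for $u:V_m\to|TV_n|$, where naturality of $\mu$ gives $\mu_B\comp TTb\comp Tu=Tb\comp u^*$ and the identity collapses to equation~(2). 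With that repair, and with the routine check that $\Psi$ sends $\Sigma$-homomorphisms to $\Tmon$-homomorphisms (which also supplies the fullness of $\Phi$ that the Remark before the theorem does not give), your argument goes through and yields the same conclusion via Remark~\ref{R:mon}.
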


\begin{proof}
	Let $\Tmon = (T,\mu,\eta)$ be a strongly finitary monad, and let $\Tmon_\Vvar$ be the free-algebra monad of its associated variety $\Vvar$.
	Then $\Tmon_\Vvar$ is also strongly finitary by Theorem~\ref{T:necessity}.
	We prove below that for every finite discrete cpo $P$ the $\Sigma$-algebra associated with $(TP,\mu_P)$ (the free algebra on $P$ for $\Tmon$) is free on $\eta_P: P \to TP$ in $\Vvar$.
	From this it follows that the same statement holds for \emph{all} cpos $P$.
	Indeed, we have seen in the proof of Proposition~\ref{P:CPO} that all cpos are obtained from $\Set_\fin$ by (iterated) directed colimits and reflexive coinserters.
	Since both $T$ and $T_\Vvar$ preserve directed colimits and reflexive coinserters, the free algebras for $\Tmon$ and $\Tmon_\Vvar$ coincide for all cpos $P$.
	Moreover, the forgetful functors of both $\Vvar$ and $\CPO^\Tmon$ are strictly monadic (Proposition~\ref{P:strict}).
	We conclude that $\Vvar$ and $\CPO^\Tmon$ are concretely isomorphic, thus $\Tmon$ is the free-algebra monad of $\Vvar$.
	
	Consider a finite discrete cpo $P$.
	Without loss of generality, $P = V_n$ for some $n \in \Nat$.
	Given an algebra $A$ in $\Vvar_\Tmon$ and a map $f : V_n \to A$, we prove that there exists a unique continuous $\Sigma$-homomorphism $\ol{f} : TV_n \to A$ with $f = \ol{f} \comp \eta_{V_n}$.
	\begin{description}
		\item[Existence] Define $\ol{f}(\sigma) = \sigma_A(f(x_i))_{i<n}$ for every $\sigma \in TV_n$.
		The equality $f = \ol{f} \comp \eta_{V_n}$ follows since $A$ satisfies the equations~(3) in~Definition~\ref{D:ass}, thus the operation of $A$ corresponding to $\eta_{V_n}(x_i)$ is the $i$-th projection.
		The map $\ol{f}$ is continuous: given $\sigma = \bigsqcup_{k \in \Nat} \sigma_k$ in $TV_n$, the algebra $A$ satisfies $\sigma = \bigvee_{k \in \Nat} \sigma_k$.
		Therefore given an $n$-tuple $f : V_n \to A$ we have
		\[
		\ol{f}(\sigma) = \sigma_A(f(x_i)) = \bigsqcup_{k \in \Nat}(\sigma_k)_A(f(x_i)) = \bigsqcup_{k \in \Nat} \ol{f}(\sigma_k).
		\]
		To prove that $\ol{f}$ is a $\Sigma$-homomorphism, take an $m$-ary operation symbol $\tau \in TV_m$.
		We prove $\ol{f} \comp \tau_{V_m} = \tau_A \comp \ol{f}^m$.
		This means that every $k : V_m \to TV_n$ fulfils
		\[
		\ol{f} \comp \tau_{V_m} (k(x_j))_{j < m} = \tau_A \comp \ol{f}^m (k(x_j))_{j < m}.
		\]
		The definition of $\ol{f}$ yields that the right-hand side is $\tau_A(k(x_j)_A(f(x_i)))$.
		Due to equation (2) in Definition~\ref{D:ass} with $\tau$ in place of $\sigma$ this is $k^*(\tau)_A(f(x_i))$.
		The left-hand side yields the same result since
		\[
		\ol{f}^m(k(x_j)) = (k(x_j))_A (f(x_i)).
		\]
		\item[Uniqueness] Let $\ol{f}$ be a continuous $\Sigma$-homomorphism with $f = \ol{f} \comp \eta_{V_n}$.
		In $TV_n$ the operation $\sigma$ asigns to $\eta_{V_n}(x_i)$ the value $\sigma$.
		(Indeed, for every $a : n \to |TV_n|$ we have $\sigma_{TV_n}(a_i) = a^*(\sigma) = \mu_{V_n} \comp Ta(\sigma)$.
		Thus $\sigma_{TV_n}(\eta_{V_n}(x_i)) = \mu_{V_n} \comp T \eta_{V_n} (\sigma) = \sigma$.)
		Since $\ol{f}$ is a homomorphism, we conclude
		\[
		f(\sigma) = \sigma_A(\ol{f} \comp \eta_{V_n} (x_i)) = \sigma_A(f(x_i))
		\]
		which is the above formula.
	\end{description}
\end{proof}

Recall that a \emph{concrete category} over $\CPO$ is a category $\A$ endowed with a faithful functor $U: \A \to \CPO$.
Given another concrete category $(\A',U')$, a \emph{concrete functor} is a functor $H : \A \to \A'$ with $U = U'H$.
	
	\begin{example}
		For every variety $\Vvar$ (considered as a concrete category in the obvious sense) the comparison functor
		\[
		K : \Vvar \to \CPO^{\Tmon_\Vvar}
		\]
		is concrete.
		Recall that $K$ assigns to an algebra $A$ the following algebra for $\Tmon_\Vvar$ on $UA$:
		\[
		U \alpha_A : (UF)UA \to UA
		\]
		where $\alpha_A : FUA \to A$ is the unique homomorphism extending $\id_A$.
		And to a homomorphism $f : A \to B$ it assigns $Uf : (UA, U\alpha_A) \to (UB, U \alpha_B)$.
	\end{example}
	
	\begin{proposition}
		\label{P:strict}
		Every variety $\Vvar$ of continuous algebras is strictly monadic: the comparison functor $K_\Vvar : \Vvar \to \CPO^{\Tmon_\Vvar}$ is an isomorphism.
	\end{proposition}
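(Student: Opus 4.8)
The plan is to apply the strict form of Beck's monadicity theorem. Since $U_\Vvar$ has a left adjoint $F_\Vvar$ (Corollary~\ref{P:free}) and the comparison functor $K_\Vvar$ is concrete, hence faithful, it suffices to prove that $U_\Vvar$ \emph{strictly creates} coequalizers of $U_\Vvar$-split pairs; the strict refinement of Beck's theorem (cf.\ \cite{maclane:cwm}, \S VI.7) then forces $K_\Vvar$ to be bijective on objects and full, i.e.\ an isomorphism of concrete categories.

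The first step is to upgrade Theorem~\ref{T:create} and Remark~\ref{R:create} to the statement that $U_\Sigma : \Sigma\text{-}\CPO \to \CPO$ strictly creates coequalizers of $U_\Sigma$-split pairs (so that $\Sigma\text{-}\CPO$ is itself strictly monadic over $\CPO$). Indeed, let $f_0,f_1 : A \to B$ be a pair in $\Sigma\text{-}\CPO$ whose image admits a split coequalizer $q : U_\Sigma B \to C$ in $\CPO$. Split coequalizers are absolute, so $q^n$ is the coequalizer of $f_0^n,f_1^n$ for every $n$; since each $f_i$ is a homomorphism, $q \comp \sigma_B$ coequalizes $f_0^n,f_1^n$ and factors uniquely through $q^n$, producing a unique $\Sigma$-structure $\sigma_C$ on $C$ that turns $q$ into a homomorphism. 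That $q : B \to C$ is the coequalizer in $\Sigma\text{-}\CPO$ follows from Lemma~\ref{L:tri} exactly as in Theorem~\ref{T:create}: $q$ is a split epimorphism, hence surjective with $\omega$-dense image, so every continuous comparison map out of $C$ is automatically a homomorphism. Uniqueness of the lift is immediate, $q^n$ being epic. Note that reflexivity of the pair is not needed; this is the same argument already used for reflexive coinserters.

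The second step transfers this to $\Vvar$. Given a $U_\Vvar$-split pair $f_0,f_1 : A \to B$ in $\Vvar$, viewed inside the full subcategory $\Vvar \hookrightarrow \Sigma\text{-}\CPO$, the previous step yields the unique lift $q : B \to C$ of the split coequalizer, which is the coequalizer in $\Sigma\text{-}\CPO$. As $q$ is surjective and $B \in \Vvar$, the algebra $C$ is a homomorphic image of an object of $\Vvar$, hence $C \in \Vvar$ by the HSP closure (Proposition~\ref{P:HSP}); being full and closed under this coequalizer, $\Vvar$ inherits $q$ as its coequalizer, and the lift is unique there as well. Thus $U_\Vvar$ strictly creates coequalizers of $U_\Vvar$-split pairs, and Beck's theorem gives that $K_\Vvar$ is an isomorphism of categories over $\CPO$. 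Finally, since the $\CPO$-enrichment of both $\Vvar$ and $\CPO^{\Tmon_\Vvar}$ is the order inherited pointwise from $\CPO$ and $K_\Vvar$ preserves underlying continuous maps, this concrete isomorphism is automatically $\CPO$-enriched.

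The main obstacle I expect is the bookkeeping around \emph{strict} (rather than up-to-isomorphism) creation: one must verify that the lifted $\Sigma$-structure on $C$ is literally unique and that the very same object is the coequalizer in $\Vvar$, so that $K_\Vvar$ is an honest isomorphism and not merely an equivalence. The only order-theoretic input — that split epimorphisms of cpos are surjective with $\omega$-dense image and that split coequalizers are absolute, feeding Lemma~\ref{L:tri} — is precisely what makes the creation succeed.
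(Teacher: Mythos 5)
Your proposal is correct and follows essentially the same route as the paper: apply the strict form of Beck's theorem, first establish creation of split coequalizers for $U_\Sigma : \Sigma\text{-}\CPO \to \CPO$, then descend to $\Vvar$ using that the split coequalizer map is surjective and that varieties are closed under homomorphic images (Proposition~\ref{P:HSP}). The only difference is that you spell out the $\Sigma\text{-}\CPO$ step (via absoluteness of split coequalizers and Lemma~\ref{L:tri}) where the paper simply cites the classical argument of Mac Lane's Theorem~VI.8.1 as ``completely analogous''.
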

	
	\begin{proof}
		\phantom{phantom}
		\begin{enumerate}
			
			\item For $\Vvar = \Sigma\text{-}\CPO$ (no equations) the proof is completely analogous to the classical (non-ordered) algebras, see Theorem~VI.8.1 in~\cite{maclane:cwm}.
			
			\item For a general variety we use Beck's theorem (in strict form), see~\cite{maclane:cwm}, Theorem~VI.7.1: we just need to prove that the forgetful functor $U_\Vvar : \Vvar \to \CPO$ creates coequalizers of $U_\Vvar$-split parallel pairs.
			Since $U_\Vvar$ is a right adjoint, it then follows that $K$ is an isomorphism.
			Thus our task is, given a parallel pair $d,d' : X \to Y$ in $\Vvar$ and morphisms $e$, $t$ and $s$ in $\CPO$ as follows
			\[
			\begin{tikzcd}
				{U_\Vvar X}
				\arrow[r, shift left, "Ud'"]
				\arrow[r, shift right, swap, "Ud"]
				&
				{U_\Vvar Y}
				\arrow[r, shift right=1ex, swap, "e"]
				\arrow[shiftarr={yshift=5ex}, l, swap, "t"] 
				&
				C
				\arrow[l, shift right=1ex, swap, "s"] 
			\end{tikzcd}
			\]
			satisfying the equations
			\begin{align*}
				e \comp Ud & = e \comp Ud' \\
				e \comp s & = \id_C \\
				Ud \comp t & = \id \\
				Ud' \comp t & = s \comp e
			\end{align*}
			that there exists a unique algebra $Z$ of $\Vvar$ with $C = U_\Vvar Z$ making $e$ a homomorphism $e : Y \to Z$.
			Moreover, $e$ is the coequalizer of $d$ and $d'$ in $\Vvar$.
			
			By item (1) the above condition holds for $U_\Sigma$: there is a unique continuous algebra $Z$ with $C = U_\Sigma Z$ making $e$ a homomorphism which is the coequalizer of $d$ and $d'$ in $\Sigma\text{-}\CPO$.
			Since $e \comp s = \id_C$, the homomorphism $e$ is surjective, thus $Y \in \Vvar$ implies $Z \in \Vvar$ (Proposition~\ref{P:HSP}).
			It follows that $e$ is a coequalizer of $d,d'$ in $\Vvar$, too.
			
		\end{enumerate}
	\end{proof}

\begin{remark}
\label{R:mon}
Every monad morphism $\alpha: \Tmon \to \Smon$ for monads on $\CPO$ induces a concrete functor from $\CPO^\Smon$ to $\CPO^\Tmon$: it assigns to $a : SA \to A$ the algebra $a \comp \alpha_A: TA \to A$.
Moreover, this defines a bijection between monad morphisms $\Tmon \to \Smon$ and concrete functors $\CPO^\Smon \to \CPO^\Tmon$ (\cite{barr+wells:toposes}, Theorem~3.6.3).
\end{remark}

From Proposition~\ref{P:strict} and Theorems~\ref{T:necessity} and~\ref{T:suff} we conclude the promised bijection between strongly finitary monads and varieties.
In fact, a stronger statement holds:

\begin{corollary}
	\label{C:main}
	The following categories are dually equivalent:
	\begin{enumerate}
		
		\item Strongly finitary monads on $\CPO$ and monad morphisms.
		
		\item Varieties of continuous algebras and concrete functors.
		
	\end{enumerate}
\end{corollary}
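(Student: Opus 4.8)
The plan is to realize the dual equivalence as a restriction of the classical anti-equivalence between monads on $\CPO$ and their Eilenberg--Moore categories, regarded as concrete categories over $\CPO$. Write $\mathbf{M}$ for the category in (1) (strongly finitary monads and monad morphisms) and $\mathbf{V}$ for the category in (2) (varieties and concrete functors); the task is to produce an equivalence $\mathbf{M}^\op \simeq \mathbf{V}$.

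First I would package Remark~\ref{R:mon} as a functor. Sending a monad $\Tmon$ to its Eilenberg--Moore category $\CPO^\Tmon$ (a concrete category over $\CPO$ via its forgetful functor) and a monad morphism $\alpha : \Tmon \to \Smon$ to the concrete functor $\CPO^\Smon \to \CPO^\Tmon$ of Remark~\ref{R:mon} defines a contravariant functor
\[
E : \mathbf{Mnd}(\CPO)^\op \to (\text{concrete categories over } \CPO).
\]
Functoriality of $E$ is the routine check that the correspondence of Remark~\ref{R:mon} respects identities and composition; the cited bijection then says precisely that $E$ is full and faithful on each hom-set.

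Next I would restrict $E$ to strongly finitary monads and identify its essential image with $\mathbf{V}$. By Theorem~\ref{T:suff} a strongly finitary monad $\Tmon$ is the free-algebra monad of its associated variety $\Vvar_\Tmon$, so $\Tmon = \Tmon_{\Vvar_\Tmon}$, and Proposition~\ref{P:strict} gives a concrete isomorphism $\Vvar_\Tmon \cong \CPO^{\Tmon_{\Vvar_\Tmon}} = \CPO^\Tmon = E(\Tmon)$; hence $E(\Tmon)$ is (concretely isomorphic to) a variety, and the restricted functor lands in $\mathbf{V}$. Conversely, for every variety $\Vvar$ the monad $\Tmon_\Vvar$ is strongly finitary (Theorem~\ref{T:necessity}) and $\Vvar \cong \CPO^{\Tmon_\Vvar} = E(\Tmon_\Vvar)$ concretely (Proposition~\ref{P:strict}); thus the restriction of $E$ is essentially surjective onto $\mathbf{V}$.

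Finally, fullness and faithfulness are inherited from $E$: a concrete functor $\Vvar \to \Vvar'$ transports, along the concrete isomorphisms $\Vvar \cong \CPO^{\Tmon_\Vvar}$ and $\Vvar' \cong \CPO^{\Tmon_{\Vvar'}}$ of Proposition~\ref{P:strict}, to a concrete functor $\CPO^{\Tmon_\Vvar} \to \CPO^{\Tmon_{\Vvar'}}$, which Remark~\ref{R:mon} identifies bijectively with monad morphisms $\Tmon_{\Vvar'} \to \Tmon_\Vvar$. An essentially surjective, full and faithful functor is an equivalence, so the restriction of $E$ yields $\mathbf{M}^\op \simeq \mathbf{V}$, i.e.\ the asserted dual equivalence. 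The only genuine work is bookkeeping: verifying that $E$ is a functor and that the bijection of Remark~\ref{R:mon} is natural enough to be transported across the concrete isomorphisms of Proposition~\ref{P:strict}. This naturality, rather than any new mathematical idea, is the main obstacle, and it is where I would be most careful.
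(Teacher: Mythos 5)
Your proposal is correct and is essentially the paper's own argument run in the opposite direction: the paper defines a functor $R:\Var^\op\to\Mon_\sf$ sending a variety to its free-algebra monad and a concrete functor $F$ to the unique monad morphism inducing $K_{\Wvar}\comp F\comp K_{\Vvar}^{-1}$, then invokes exactly the same ingredients you do (Remark~\ref{R:mon} for fullness and faithfulness, Proposition~\ref{P:strict} for the concrete isomorphisms, and Theorem~\ref{T:suff} for essential surjectivity). The bookkeeping you flag as the main obstacle is precisely what the paper dismisses with ``it is easy to see that $R$ is full and faithful,'' so no new content is needed.
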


Let $\Mon_\sf$ denote the category of strongly finitary monads and $\Var$ that of varieties.
We define a functor
\[
R: \Var^\op \to \Mon_\sf
\]
on objects by $R(\Vvar) = \Tmon_\Vvar$ (see Theorem~\ref{T:necessity}).
Given a concrete functor $F: \Vvar \to \Wvar$ between varieties, the composite $K_\Wvar \comp F \comp K_\Vvar^{-1} : \CPO^{\Tmon_\Vvar} \to \CPO^{\Tmon_\Wvar}$ is also concrete.
Thus there is a unique monad morphism $\alpha: \Tmon_\Wvar \to \Tmon_\Vvar$ inducing it.
We define $R(F) = \alpha$.
It is easy to see that $R$ is a full and faithful functor.
It is an equivalence functor since every object of $\Mon_\sf$ is isomorphic to some $R(\Vvar)$ by Theorem~\ref{T:suff}.

We now turn to $\Delta$-continuous algebras.
Recall $\| \Sigma \|$ and Proposition~\ref{P:fingen}.

\begin{definition}
	Let $\Tmon = (T,\mu,\eta)$ be a strongly finitary monad on $\DCPO$.
	For the signature $\Sigma$ with $\Sigma_n = |TV_n|$ for all $n \in \Nat$ we define the \emph{associated variety} by the following equations, where $n$ and $m$ range over $\Nat$:
	\begin{enumerate}
		
		\item $\sigma = \bigvee_{k < \alpha} \sigma_k$ for every ordinal $\alpha \leq \| \Sigma \|$ and every $\alpha$-chain $(\sigma_k)_{k < \alpha}$ in $TV_n$ with $\sigma = \bigsqcup_{k < \alpha} \sigma_k$.
		
		\item $u^*(\sigma) = \sigma(u(x_i))$ for every $\sigma \in |TV_n|$ and all maps $u : V_n \to |TV_m|$.
		
		\item $\eta_{V_n}(x_i) = x_i$ for all $i < n$.
		
	\end{enumerate}
\end{definition}

\begin{theorem}
Every strongly finitary monad on $\DCPO$ is the free-algebra monad of its associated variety.
\end{theorem}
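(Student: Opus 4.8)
The plan is to follow the proof of Theorem~\ref{T:suff} essentially verbatim, isolating the two places where the passage from $\omega$-chains to arbitrary ordinal chains forces a genuine change. Write $\Tmon = (T,\mu,\eta)$ for the given strongly finitary monad on $\DCPO$, let $\Vvar$ be its associated variety, and let $\Tmon_\Vvar$ be the free-algebra monad of $\Vvar$; by Theorem~\ref{T:sf} this monad is again strongly finitary. As in the $\CPO$ case it suffices to show that for each finite discrete dcpo $V_n$ the $\Sigma$-algebra associated with the free algebra $(TV_n,\mu_{V_n})$ is free on $\eta_{V_n}\colon V_n \to TV_n$ in $\Vvar$. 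Granting this for all $n$, the monads $T$ and $T_\Vvar$ agree on $\Set_\fin$; since both are strongly finitary, hence preserve directed colimits and reflexive coinserters, and every dcpo arises from $\Set_\fin$ by iterating precisely these colimits (Proposition~\ref{P:DCPO}), they agree on all of $\DCPO$. Strict monadicity of both forgetful functors (the $\Delta$-continuous analogue of Proposition~\ref{P:strict} proved above) then yields a concrete isomorphism $\Vvar \cong \DCPO^\Tmon$, i.e.\ $\Tmon$ is the free-algebra monad of $\Vvar$.

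For the finite case I would fix an algebra $A$ in $\Vvar$ and a map $f\colon V_n \to A$, and define $\ol{f}\colon TV_n \to A$ by $\ol{f}(\sigma) = \sigma_A(f(x_i))_{i<n}$. Exactly as in Theorem~\ref{T:suff}, equation~(3) of the associated variety gives $f = \ol{f}\comp\eta_{V_n}$, equation~(2) makes $\ol{f}$ a $\Sigma$-homomorphism, and the same computation establishes uniqueness: any continuous homomorphism extending $f$ along $\eta_{V_n}$ must send $\sigma$ to $\sigma_A(f(x_i))$, because in $TV_n$ the operation $\sigma$ applied to $(\eta_{V_n}(x_i))_i$ returns $\sigma$ itself. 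Monotonicity of $\ol{f}$ is also covered: applying equation~(1) to the eventually constant chain $\sigma \le \tau \le \tau \le \cdots$ (length $\le \|\Sigma\|$) forces $\sigma_A(f(x_i)) \le \tau_A(f(x_i))$ whenever $\sigma \le \tau$ in $TV_n$.

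The one genuinely new point — and the step I expect to be the main obstacle — is the $\Delta$-continuity of $\ol{f}$, which in $\DCPO$ means preservation of joins of chains of arbitrary ordinal length rather than merely of $\omega$-chains. Here the defining equations~(1) only mention chains of length $\alpha \le \|\Sigma\|$, so I must argue that every chain join in $TV_n$ is already governed by such an instance. This is exactly what Proposition~\ref{P:fingen} delivers: $TV_n$ is finitely generated (by the $n$ variables), so $\card |TV_n| \le \|\Sigma\|$. A chain in $TV_n$ is thus a linearly ordered set of cardinality at most $\|\Sigma\|$, and therefore has a cofinal well-ordered subchain whose length is its cofinality — a cardinal bounded by $\|\Sigma\|$, hence an ordinal $\lambda \le \|\Sigma\|$. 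Reindexing this cofinal subchain as a strictly increasing family $(\sigma_k)_{k<\lambda}$ with join $\sigma$, equation~(1) says that $A$ satisfies $\sigma = \bigvee_{k<\lambda}\sigma_k$, which unwinds to $\ol{f}(\sigma) = \bigsqcup_{k<\lambda}\ol{f}(\sigma_k)$; by monotonicity of $\ol{f}$ this join coincides with the join over the original chain. Hence $\ol{f}$ preserves joins of chains and is $\Delta$-continuous.

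Everything else transcribes from Theorem~\ref{T:suff}, so the whole argument hinges on the cardinal bookkeeping: it is only the matching of $\|\Sigma\| = 2^{\alpha+\aleph_0}$ (via the bound of Proposition~\ref{P:fingen}) with the range of ordinals admitted both in the term syntax and in equations~(1) that makes the continuity step go through. I would therefore present the finite case in full and then invoke Proposition~\ref{P:DCPO} together with strict monadicity to globalize, exactly as the $\CPO$ argument globalizes via Proposition~\ref{P:CPO}.
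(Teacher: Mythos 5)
Your proposal is correct and follows the paper's argument essentially verbatim: reduce to the free algebras on finite discrete dcpos, transcribe the proof of Theorem~\ref{T:suff}, and observe that the only genuinely new point is the $\Delta$-continuity of $\ol{f}$, handled by bounding the length of a strictly increasing chain in $TV_n$ by $\card\, TV_n \leq \|\Sigma\|$ so that equations~(1) of the associated variety apply. The only (immaterial) difference is that you justify the cardinality bound via Proposition~\ref{P:fingen}, whereas the paper gets it from Remark~\ref{R:set} (in effect from $|TV_n| = \Sigma_n \subseteq \Sigma$); both work.
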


\begin{proof}
This is analogous to the proof of Theorem~\ref{T:suff}.
In the 'Existence' part, the proof that $\ol{f}$ is $\Delta$-continuous uses the fact that this is equivalent to preserving joins of chains (\cite{adamek+rosicky}, Corollary~1.7).
Let $\alpha$ be an ordinal for which a strictly increasing chain $\sigma_i$ ($i < \alpha$) exists in $TV_n$.
Then by Remark~\ref{R:set} we have
\[
\alpha \leq \card TV_n \leq \| \Sigma \|.
\]
Thus the given algebra $A$ satisfies $\sigma = \bigvee_{k<\alpha} \sigma_k$.
This implies $\ol{f}(\sigma) = \bigcup_{k < \alpha} \ol{f}(\sigma_k)$.
Since $\ol{f}$ preserves joins of increasing chains, it preserves all chain joins.
This is the only modification of the proof of Theorem~\ref{T:suff} that is needed.
\end{proof}

Concrete categories over $\DCPO$ are defined analogously to $\CPO$, and the proof of the following proposition is also analogous to that of Proposition~\ref{P:strict}:

\begin{proposition}
Every variety $\Vvar$ of $\Delta$-continuous algebras is strictly monadic.
\end{proposition}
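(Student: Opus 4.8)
The plan is to mirror the proof of Proposition~\ref{P:strict} verbatim, replacing $\CPO$ by $\DCPO$ throughout and invoking the $\Delta$-continuous analogues of the cited ingredients. The whole argument is purely categorical and touches the variety $\Vvar$ only through its closure properties, so none of the combinatorics of $\Delta$-extended terms re-enters.

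First I would dispose of the base case $\Vvar = \Sigma\text{-}\DCPO$ (no equations). Here strict monadicity is established exactly as for classical, non-ordered algebras (cf.\ \cite{maclane:cwm}, Theorem~VI.8.1): the comparison functor $K_\Vvar : \Sigma\text{-}\DCPO \to \DCPO^{\Tmon_\Sigma}$ is an isomorphism because both sides name the same structure, namely a $\Delta$-continuous $\Sigma$-algebra. In particular $U_\Sigma$ then creates coequalizers of $U_\Sigma$-split pairs.

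For a general variety I would apply Beck's theorem in its strict form (\cite{maclane:cwm}, Theorem~VI.7.1). Since $U_\Vvar : \Vvar \to \DCPO$ is a right adjoint (it has the left adjoint $F_\Vvar$ by the $\Delta$-continuous analogue of Corollary~\ref{P:free}), it suffices to show that $U_\Vvar$ creates coequalizers of $U_\Vvar$-split parallel pairs. So I would take a pair $d,d' : X \to Y$ in $\Vvar$ together with a split-coequalizer datum $e,s,t$ in $\DCPO$ satisfying the four split-coequalizer equations, exactly as in Proposition~\ref{P:strict}. By the base case, $U_\Sigma$ creates this coequalizer: there is a unique $\Delta$-continuous algebra $Z$ on $C$ making $e : Y \to Z$ a homomorphism, and $e$ is the coequalizer of $d,d'$ in $\Sigma\text{-}\DCPO$. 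Because $e \comp s = \id_C$, the homomorphism $e$ is split epi, hence surjective; and since $\Vvar$ is closed in $\Sigma\text{-}\DCPO$ under homomorphic images (the $\Delta$-continuous HSP closure proved earlier in this section), $Y \in \Vvar$ forces $Z \in \Vvar$. Consequently $e$ is a coequalizer of $d,d'$ in $\Vvar$ as well, verifying the Beck condition and yielding that $K_\Vvar$ is an isomorphism.

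I expect no genuine obstacle here, since Beck's criterion is insensitive to the passage from $\omega$-chains to chains indexed by ordinals up to $\| \Sigma \|$: the term structure plays no role in the monadicity argument, which uses $\Vvar$ only through closure under homomorphic images. The single point deserving a word of care is confirming that the base-case strict monadicity of $U_\Sigma$ (and hence its creation of split coequalizers) transfers from $\Sigma\text{-}\CPO$ to $\Sigma\text{-}\DCPO$ unchanged; but as the description of free algebras and of homomorphisms is literally the same in both settings, this transfer is routine.
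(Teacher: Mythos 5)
Your proposal is correct and follows exactly the route the paper intends: the paper's own proof of this proposition is simply declared ``analogous to Proposition~\ref{P:strict}'', and your explicit unwinding (base case for $\Sigma\text{-}\DCPO$, Beck's strict monadicity criterion via creation of split coequalizers, and closure of $\Vvar$ under homomorphic images to conclude $Z \in \Vvar$) is precisely that analogy carried out. The only ingredient you rely on beyond Proposition~\ref{P:strict} itself --- the HSP closure of varieties of $\Delta$-continuous algebras --- is indeed established earlier in the $\DCPO$ section, so there is no gap.
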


From the above we get the following

\begin{corollary}
	The following categories are dually equivalent:
	\begin{enumerate}[label=(\roman*)]
		
		\item Strongly finitary monads on $\DCPO$ and monad morphisms.
		
		\item Varieties of $\Delta$-continuous algebras and concrete functors.
		
	\end{enumerate}
\end{corollary}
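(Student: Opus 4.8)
The plan is to reproduce, over the base $\DCPO$, the construction of the dual equivalence $R\colon \Var^\op \to \Mon_\sf$ given after Corollary~\ref{C:main}. Write $\Var_\Delta$ for the category of varieties of $\Delta$-continuous algebras and concrete functors, and $\mathcal{M}$ for the category of strongly finitary monads on $\DCPO$ and monad morphisms. First I would define a functor $R\colon \Var_\Delta^\op \to \mathcal{M}$ on objects by $R(\Vvar) = \Tmon_\Vvar$; this is well defined since Theorem~\ref{T:sf} shows $\Tmon_\Vvar$ to be strongly finitary for every variety $\Vvar$ of $\Delta$-continuous algebras.

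For the action on morphisms I would invoke the $\DCPO$-analogue of Remark~\ref{R:mon}: for monads on $\DCPO$ the monad morphisms $\Tmon \to \Smon$ correspond bijectively to the concrete functors $\DCPO^\Smon \to \DCPO^\Tmon$, this being the general fact (\cite{barr+wells:toposes}, Theorem~3.6.3) specialised to the base $\DCPO$. Given a concrete functor $F\colon \Vvar \to \Wvar$, the strict monadicity of varieties (the proposition above) supplies comparison isomorphisms $K_\Vvar$ and $K_\Wvar$, so that $K_\Wvar \comp F \comp K_\Vvar^{-1}\colon \DCPO^{\Tmon_\Vvar} \to \DCPO^{\Tmon_\Wvar}$ is again concrete; the bijection then yields a unique monad morphism $\alpha\colon \Tmon_\Wvar \to \Tmon_\Vvar$, and I set $R(F) = \alpha$. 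Functoriality and the contravariance recorded by $\Var_\Delta^\op$ follow at once from the uniqueness in the bijection.

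Fullness and faithfulness of $R$ then follow formally. The assignment $F \mapsto K_\Wvar \comp F \comp K_\Vvar^{-1}$ is a bijection between concrete functors $\Vvar \to \Wvar$ and concrete functors $\DCPO^{\Tmon_\Vvar} \to \DCPO^{\Tmon_\Wvar}$, each $K$ being a concrete isomorphism; and the passage to $\alpha$ is a bijection by the monad-morphism correspondence. Composing the two bijections exhibits $R$ as fully faithful. Finally, $R$ is essentially surjective: by the representation theorem above every strongly finitary monad $\Tmon$ on $\DCPO$ is the free-algebra monad of its associated variety $\Vvar_\Tmon$, whence $\Tmon \cong R(\Vvar_\Tmon)$. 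Thus $R$ is an equivalence, which is precisely the asserted dual equivalence.

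I expect no genuine obstacle here, since all the substantive work has already been done for $\DCPO$; the single point deserving care is the transfer of the monad-morphism/concrete-functor bijection of Remark~\ref{R:mon} from $\CPO$ to $\DCPO$. This is a base-independent property of Eilenberg--Moore categories, and so it holds verbatim.
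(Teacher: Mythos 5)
Your proposal is correct and follows exactly the paper's route: the paper derives this corollary "from the above" by transplanting to $\DCPO$ the argument given after Corollary~\ref{C:main}, i.e.\ defining $R(\Vvar)=\Tmon_\Vvar$ (strongly finitary by Theorem~\ref{T:sf}), acting on morphisms via the comparison isomorphisms and the monad-morphism/concrete-functor bijection of Remark~\ref{R:mon}, and concluding essential surjectivity from the representation theorem for strongly finitary monads on $\DCPO$. Nothing further is needed.
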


\appendix

\section{The categories $\CPO$ and $\DCPO$}

The aim of this appendix is to collect properties of the categories of complete partial orders needed in our paper.

A subset $X$ of a poset is \emph{directed} if it is nonempty, and every pair of elements has an upper bound in $X$.
A \emph{dcpo} (directed-complete partially ordered set) is a poset with directed joins.
The category
\[
\DCPO
\]
of dcpos has as morphisms the \emph{$\Delta$-continuous maps}: monotone maps preserving directed joins.
By a \emph{cpo} we mean a poset with joins of $\omega$-chains.
The category
\[
\CPO
\]
of cpos has as morphisms \emph{continuous maps}: monotone maps preserving joins of $\omega$-chains.

\begin{observation}
\label{O:app}
Both categories $\CPO$ and $\DCPO$ are cartesian closed.
Indeed, if $A$ and $B$ are cpos, then the poset $[A,B]$ of all continuous maps ordered pointwise has pointwise joins of $\omega$-chains.
Thus $[A,B]$ is a cpo.
The functor $[A,\blank]$ is right adjoint to $A \times \blank$.

Analogously for $\DCPO$.
\end{observation}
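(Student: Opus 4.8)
The plan is to verify the two defining ingredients of cartesian closedness for $\CPO$ separately, and then to observe that $\DCPO$ is handled by the same argument. First I would check that $\CPO$ has finite products: the one-element poset is terminal, and the binary product of cpos $A$ and $B$ is the cartesian product $A \times B$ with the coordinatewise order, in which joins of $\omega$-chains are computed coordinatewise. The projections are then continuous, and a pair of continuous maps into $A$ and $B$ pairs up to a continuous map into $A \times B$, so the universal property is immediate. It remains to produce, for each cpo $A$, a right adjoint $[A,\blank]$ to the functor $A \times \blank$.

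The natural candidate for the exponential is the set $[A,B]$ of all continuous maps $A \to B$, ordered pointwise ($f \sqleq g$ iff $f(a) \sqleq g(a)$ for all $a \in A$). The crucial step, and the one I expect to be the \emph{main obstacle}, is to show that $[A,B]$ is again a cpo with $\omega$-joins formed pointwise. Given an $\omega$-chain $(f_n)$ in $[A,B]$, the candidate join is $f(a) = \bigsqcup_n f_n(a)$, which exists because $B$ is a cpo; the content is that this $f$ is continuous. This reduces to a Fubini-type interchange of $\omega$-joins: for an $\omega$-chain $(a_m)$ in $A$,
\[
f\Bigl(\bigsqcup_m a_m\Bigr) = \bigsqcup_n f_n\Bigl(\bigsqcup_m a_m\Bigr) = \bigsqcup_n \bigsqcup_m f_n(a_m) = \bigsqcup_m \bigsqcup_n f_n(a_m) = \bigsqcup_m f(a_m),
\]
where the middle equalities use the continuity of each $f_n$ and that a doubly indexed family of chain joins has the same supremum in either order.

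With $[A,B]$ established as a cpo, I would finish by exhibiting the currying bijection $\CPO(C \times A, B) \cong \CPO(C, [A,B])$, natural in $C$: uncurrying sends $g$ to $c \mapsto g(c,\blank)$, and currying is its inverse $h \mapsto ((c,a) \mapsto h(c)(a))$. The only nonformal checks are that both directions land among continuous maps — that $c \mapsto g(c,\blank)$ is continuous into $[A,B]$, and that the evaluation-style map $(c,a) \mapsto h(c)(a)$ is jointly continuous — each of which is another instance of the same interchange of $\omega$-joins as above. Naturality in $C$ is then routine, so $[A,\blank]$ is right adjoint to $A \times \blank$ and $\CPO$ is cartesian closed. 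For $\DCPO$ the argument goes through verbatim with directed joins in place of $\omega$-chain joins, using that a monotone map between dcpos is $\Delta$-continuous iff it preserves directed joins; the one point to record is that the pointwise directed join of $\Delta$-continuous maps is again $\Delta$-continuous, by the identical interchange.
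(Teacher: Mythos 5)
Your proposal is correct and follows the same route the paper takes: the paper's observation simply asserts that $[A,B]$ with the pointwise order has pointwise $\omega$-joins (hence is a cpo) and that $[A,\blank]$ is right adjoint to $A\times\blank$, and your write-up just fills in the standard details (coordinatewise products, the join-interchange argument for continuity of the pointwise join, and the currying bijection). No discrepancy to report.
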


\begin{remark}
\phantom{phantom}
\begin{enumerate}

	\item A $\CPO$-enriched category has cpo structure on hom-sets making composition continuous.
	A $\CPO$-enriched functor $F: \K \to \LL$ is an ordinary functor which is \emph{locally continuous}: given $f = \bigsqcup_{n \in \Nat} f_n$ in $\K(X,Y)$, then $Ff = \bigsqcup_{n \in \Nat} Ff_n$ in $\LL(FX,FY)$.
	Enriched natural transformations are just the ordinary ones.
	
	\item Analogously for $\DCPO$: enriched functors $F: \K\to \LL$ are locally $\Delta$-continuous: the map $\K(X,Y) \to \LL(FX,FY)$ preserves directed joins.
	
\end{enumerate}
\end{remark}

\begin{proposition}[\cite{DP}]
The category $\DCPO$ is reflective in $\Pos$: the reflection $Id(P)$ of a poset $P$ is a dcpo of all \emph{ideals}, i.e.\ directed down-sets, ordered by inclusion.
The reflection map assigns to $x \in P$ the (principal) ideal $\downarrow x$.
\end{proposition}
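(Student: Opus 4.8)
The plan is to verify the three standard ingredients of a reflection: that $Id(P)$ is an object of $\DCPO$, that the candidate unit $\eta_P\colon P\to Id(P)$, $x\mapsto{\downarrow}x$, is a morphism of $\Pos$, and that $\eta_P$ enjoys the universal property against all dcpos.

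First I would check that $Id(P)$ is a dcpo. Given a directed family of ideals $(I_j)_{j\in J}$, I claim its union $\bigcup_{j}I_j$ is again an ideal and is precisely the join in $Id(P)$ ordered by inclusion. It is a down-set because a union of down-sets is a down-set, and it is directed: any two elements $a,b$ lie in some $I_{j_1},I_{j_2}$, directedness of the family supplies an ideal $I_k$ containing both, and directedness of $I_k$ then yields an upper bound inside $I_k\subseteq\bigcup_j I_j$. Hence directed joins in $Id(P)$ are just unions, so $Id(P)\in\DCPO$. Monotonicity of $\eta_P$ is immediate, since $x\sqleq y$ gives ${\downarrow}x\subseteq{\downarrow}y$.

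Next comes the universal property. Given any dcpo $D$ and any monotone map $f\colon P\to D$, I would define $\bar f\colon Id(P)\to D$ by $\bar f(I)=\bigsqcup_{x\in I}f(x)$; this join exists because the image $f[I]$ is directed, being the monotone image of a directed set, and $D$ has directed joins. Monotonicity of $\bar f$ is clear from $I\subseteq J\Rightarrow f[I]\subseteq f[J]$, and the factorization $\bar f({\downarrow}x)=f(x)$ holds because $x$ is the largest element of ${\downarrow}x$, so $f(x)$ is the largest, hence the join, of $f[{\downarrow}x]$. That $\bar f$ is $\Delta$-continuous follows from the description of directed joins in $Id(P)$ as unions together with the interchange law for directed joins in $D$, giving $\bar f\bigl(\bigcup_j I_j\bigr)=\bigsqcup_{x\in\bigcup_j I_j}f(x)=\bigsqcup_{j}\bigsqcup_{x\in I_j}f(x)=\bigsqcup_{j}\bar f(I_j)$.

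For uniqueness I would use that every ideal is a directed join of principal ideals: for $I\in Id(P)$ the family $\{{\downarrow}x\mid x\in I\}$ is directed (since $I$ is) and its union is exactly $I$, so $I=\bigsqcup_{x\in I}{\downarrow}x$ in $Id(P)$. Consequently any $\Delta$-continuous $g$ with $g\comp\eta_P=f$ must satisfy $g(I)=\bigsqcup_{x\in I}g({\downarrow}x)=\bigsqcup_{x\in I}f(x)=\bar f(I)$, forcing $g=\bar f$. The only genuinely delicate point is the interchange of iterated directed joins used for $\Delta$-continuity, which is a standard property of dcpos; everything else reduces to routine closure checks for down-sets and directedness.
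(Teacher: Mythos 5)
Your proof is correct and is the standard ideal-completion argument; the paper itself offers no proof, merely citing Davey--Priestley, and your argument is exactly the one found there (directed unions of ideals are ideals, extend $f$ by $\bar f(I)=\bigsqcup f[I]$, and use that every ideal is the directed join of the principal ideals it contains for uniqueness). All the steps check out, including the join-interchange you single out, which follows from the mutual-upper-bound argument for directed families.
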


\begin{remark}
Analogously, $\CPO$ is reflective in $\Pos$: the reflection of $P$ is the cpo of all \emph{$\omega$-ideals} (ordered again by inclusion).
These are ideals $M \subseteq P$ for which there exists an $\omega$-chain of $P$ in $M$ such that every subideal of $M$ containing that chain is all of $M$.
\end{remark}

\begin{corollary}
\label{C:co}
Both $\CPO$ and $\DCPO$ are closed in $\Pos$ under weighted limits, and they both have weighted colimits. 
\end{corollary}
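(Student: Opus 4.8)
The plan is to derive both halves from the reflectivity of $\CPO$ and $\DCPO$ in $\Pos$ recorded above, together with the fact that $\Pos$, enriched over itself, has all weighted limits and colimits (it satisfies Assumption~\ref{ASS}).

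For the limits I would use that in any $\Pos$-category every weighted limit is assembled from conical limits and cotensors: for a weight $W: \D\to\Pos$ and a diagram $D$ one has the end $\{W,D\}=\int_d [Wd,Dd]$, which is the conical equalizer of two morphisms between products of the cotensors $[Wd,Dd]$ (\cite{kelly:book}). It therefore suffices to check that $\CPO$ and $\DCPO$ are closed in $\Pos$ under conical limits and under cotensors. Closure under conical limits is immediate, since a reflective full subcategory is closed under all limits. For cotensors, recall that the cotensor of $X$ by $P$ in $\Pos$ is the internal hom $[P,X]$ of monotone maps with the pointwise order; if $X$ is a cpo, then a monotone $\omega$-chain in $[P,X]$ has a pointwise join, which is again monotone, so $[P,X]$ is a cpo, and the same pointwise argument (now for directed families) shows $[P,X]$ is a dcpo whenever $X$ is. As $\CPO$ and $\DCPO$ are full in $\Pos$, these are simultaneously their internally computed cotensors. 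Feeding both closure properties into the end formula shows that $\CPO$ and $\DCPO$ are closed in $\Pos$ under all weighted limits.

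For the colimits I would first promote the two reflections to $\Pos$-enriched adjunctions. Writing $U: \A\hookrightarrow\Pos$ for either inclusion and $L\dashv U$ for the reflector, the ordinary hom-bijection $g\mapsto Ug\comp\eta_P$ is monotone in both directions for the pointwise orders, hence an order-isomorphism $\A(LP,X)\cong\Pos(P,UX)$; since $\Pos$-natural transformations between $\Pos$-functors coincide with ordinary ones, this natural order-isomorphism already exhibits $L\dashv U$ as a $\Pos$-enriched adjunction. Now $\Pos$ is $\Pos$-cocomplete: it has all conical colimits, and the tensor $P\tensor X$ is the product $P\times X$. A reflective full sub-$\Pos$-category of a $\Pos$-cocomplete $\Pos$-category is again $\Pos$-cocomplete, with $\Colim{W}{D}$ computed by applying $L$ to $\Colim{W}{UD}$ formed in $\Pos$; hence $\CPO$ and $\DCPO$ have all weighted colimits. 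In contrast with the limits, these colimits are generally \emph{not} inherited from $\Pos$, as Example~\ref{E:N} already shows.

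The pointwise verifications for $[P,X]$ and the monotonicity of the hom-bijection are routine. The step deserving the most care, and where I would concentrate, is the upgrade from ordinary to $\Pos$-enriched reflections: once this is in place, the enriched principle that a reflective subcategory of a cocomplete $\V$-category is again cocomplete delivers the colimit statement, while the decomposition of weighted limits into conical limits and cotensors delivers the limit statement.
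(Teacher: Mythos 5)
The central problem is that $\CPO$ and $\DCPO$ are \emph{not} full subcategories of $\Pos$: their morphisms are the ($\Delta$-)continuous maps, a proper subclass of the monotone ones, and Definition~\ref{D:basic} explicitly treats them as not-necessarily-full subcategories of $\Pos$. Both halves of your argument lean on fullness. For the limits this is repairable: the correct (and easy) justification is not ``a reflective full subcategory is closed under all limits'' but the direct check that, for a diagram whose connecting maps are continuous, the conical limit computed in $\Pos$ (a subposet of a product cut out by continuous maps) is closed under coordinatewise $\omega$-joins (resp.\ directed joins), that the cotensor $[P,X]$ of monotone maps has pointwise joins whenever $X$ does, and that the two maps in the end formula $\int_d [Wd,Dd]$ preserve these joins (one of them uses the continuous maps $D\varphi$, the other only precomposes with $W\varphi$), so that their equalizer is again a sub-cpo. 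Your conclusion there is right, but the stated reason is not the actual content.

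The colimit half has a genuine gap. The principle ``a reflective full sub-$\V$-category of a cocomplete $\V$-category is cocomplete, with $\Colim{W}{D}$ computed by reflecting $\Colim{W}{UD}$'' really does need fullness: its proof uses $\Pos(UDd,UX)\cong\CPO(Dd,X)$, which fails here. Worse, the formula itself is false in $\CPO$: for the one-object identity diagram on $A=\Nat^\top$ the colimit is $A$ itself, whereas $L(A)$, the $\omega$-ideal completion of $\Nat^\top$, is strictly larger (it contains the ideal of all finite elements, which is distinct from every principal ideal, so $L(A)$ has order type $\omega+2$). Example~\ref{E:N} is consistent with your formula only because its objects are finite posets, for which monotone and continuous cocones coincide. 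To obtain cocompleteness you need a genuine argument: conical coproducts are disjoint unions; tensors do come from the reflection, via $P\tensor X\cong L(P)\times X$, since $\CPO(L(P)\times X,Y)\cong\Pos(P,\CPO(X,Y))$ by Observation~\ref{O:app} and the (correctly established) enriched adjunction $L\dashv U$; but conical coequalizers and coinserters require something further, e.g.\ the factorization system and cowellpoweredness of Lemmas~\ref{L:factor} and~\ref{L:cow} (constructing the coequalizer as the joint quotient through the set of all $\omega$-dense-image quotients coequalizing the pair), or an appeal to local presentability of $\CPO$ and $\DCPO$.
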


\begin{definition}
\label{D:dense}
A \emph{sub-cpo} of a cpo $P$ is a subposet closed under joins of $\omega$-chains.
A subset $X \subseteq P$ is \emph{$\omega$-dense} if no proper sub-cpo od $P$ contains it.
\end{definition}

Analogously, a sub-dcpo and $\Delta$-density are defined for a dcpo.

\begin{lemma}
\label{L:factor}
The category $\CPO$ has a factorization system $(\EE,\MM)$ where $\EE$ consists of morphisms with an $\omega$-dense image, and $\MM$ of monomorphisms representing sub-cpos.
\end{lemma}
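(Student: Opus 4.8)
The plan is to build the factorization by closing images under $\omega$-joins. Given a continuous map $f \colon A \to P$, I would first note that the intersection of any family of sub-cpos of $P$ is again a sub-cpo (a subposet closed under joins of $\omega$-chains), so there is a least sub-cpo $\overline{f[A]}$ of $P$ containing the image $f[A]$. Corestricting $f$ gives a continuous map $e \colon A \to \overline{f[A]}$, while the inclusion is a sub-cpo embedding $m \colon \overline{f[A]} \hookrightarrow P$, so that $f = m \comp e$ with $m \in \MM$. The map $e$ lies in $\EE$: any sub-cpo of $\overline{f[A]}$ containing $e[A] = f[A]$ is in particular a sub-cpo of $P$ containing $f[A]$, hence is all of $\overline{f[A]}$; thus $e[A]$ is $\omega$-dense in $\overline{f[A]}$.

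The crux is the unique diagonal fill-in. Given a commutative square with left leg $e \colon A \to B$ in $\EE$ and right leg a sub-cpo inclusion $m \colon C \hookrightarrow D$ in $\MM$, say $v \comp e = m \comp u$, I would consider the preimage $v^{-1}[C] \subseteq B$. Since $v$ is continuous and $C$ is closed under $\omega$-joins in $D$, this preimage is a sub-cpo of $B$: for an $\omega$-chain $(b_n)$ in $v^{-1}[C]$ one has $v(\bigsqcup_n b_n) = \bigsqcup_n v(b_n) \in C$. The commutativity $v \comp e = m \comp u$ shows $e[A] \subseteq v^{-1}[C]$, and because $e[A]$ is $\omega$-dense this forces $v^{-1}[C] = B$, i.e.\ $v$ factors through $C$. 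As $m$ is monic there is a unique map $d \colon B \to C$ of underlying sets with $m \comp d = v$, and then $d \comp e = u$ follows by cancelling $m$. Finally $d$ is continuous because $m$, being a sub-cpo inclusion, is an order-embedding preserving and reflecting $\omega$-joins; uniqueness of $d$ is immediate from $m$ being monic.

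It then remains to record the routine facts that $\EE$ and $\MM$ each contain all isomorphisms and are closed under composition, after which the existence of $(\EE,\MM)$-factorizations together with the orthogonality $\EE \perp \MM$ give, by the standard characterization, that $(\EE,\MM)$ is a factorization system. I expect the only genuine obstacle to be the fill-in step, and inside it the single observation that the preimage of a sub-cpo under a continuous map is a sub-cpo; once $v^{-1}[C] = B$ has been extracted from $\omega$-density, everything else is forced. The same argument, reading $\Delta$-continuous for continuous and directed joins for $\omega$-joins, will yield the analogous factorization system on $\DCPO$.
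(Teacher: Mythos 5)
Your proposal is correct and follows essentially the same route as the paper: factor $f$ through the least sub-cpo containing its image, then obtain the diagonal fill-in by showing that the middle object is carried into the given sub-cpo, so that the diagonal is just a corestriction. The only difference is cosmetic: where the paper runs an induction over the iterated $\omega$-join closure of $e[A]$, you observe directly that $v^{-1}[C]$ is a sub-cpo containing the $\omega$-dense image and hence all of $B$, which is a tidier packaging of the same argument.
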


\begin{proof}
For every morphism $f : A \to B$ let $m : B_0 \hookrightarrow B$ be the smallest sub-cpo of $B$ containing $f[A]$.
The codomain restriction $e : A \to B_0$ of $f$ is clearly $\omega$-dense.
To verify the diagonal fill-in, let a commutative square
\[
\begin{tikzcd}
A \ar[r, "e"] \ar[d, swap, "u"] & A' \ar[d, "u'"] \ar[ld, dotted, swap, "d"] \\
B \ar[r, swap, "m"] & B'
\end{tikzcd}
\]
be given with $e[A]$ $\omega$-dense in $A'$ and $m : B \to B'$ representing a sub-cpo.
Without loss of generality assume $B \subseteq B'$ and $m$ is the inclusion map.
Then $u'[A'] \subseteq B$.
Indeed, $A'$ is the iterated closure of $e[A]$ under $\omega$-joins, so we just need to observe that (i) for $x \in e[A]$ we have $u'(x) \in B$ and (ii) given an $\omega$-chain $(x_n)$ in $A'$ with $u'(x_n) \in B$ for all $n$, then $u'(\bigsqcup_{n < \omega} x_n) \in B$.
Now (i) follows from the square above: given $x = e(y)$, we have $u'(x) = m \comp u (y) \in B$.
And (ii) follows from the continuity of $u'$: we have $u'(\bigsqcup_{n < \omega} x_n) = \bigsqcup_{n < \omega} u'(x_n) \in B$.
The desired diagonal $d : A' \to B$ is the codomain restriction of $u'$: since $u'$ is continuous, so is $d$.
\end{proof}

Recall the concept of a coinserter (Definition~\ref{D:coins}).
 
\begin{lemma}
\label{L:colim-d}
\phantom{phantom}
\begin{enumerate}

	\item Every coinserter in $\CPO$ has an $\omega$-dense image.
	
	\item Every directed colimit $c_i : A_i \to C$ ($i \in I$) in $\CPO$ has the union of images $\bigcup_{i \in I} c_i[A_i]$ $\omega$-dense.
	
\end{enumerate}
\end{lemma}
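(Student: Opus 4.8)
The plan is to recast $\omega$-density of the image as the assertion that a single sub-cpo inclusion is invertible, and then to force that inclusion to split by feeding the image back through the universal property of the colimit at hand. Recall from Definition~\ref{D:dense} that a subset $X \subseteq C$ is $\omega$-dense exactly when the smallest sub-cpo $C_0 \subseteq C$ containing $X$ is all of $C$. Writing $m : C_0 \hookrightarrow C$ for the inclusion, which is a monomorphism representing a sub-cpo (hence lies in $\MM$ by Lemma~\ref{L:factor}), it suffices in both cases to prove that $m$ is an isomorphism; and since $m$ is monic, it is enough to produce a morphism $u : C \to C_0$ with $m \comp u = \id_C$, because a split epimorphism that is monic is automatically invertible.

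For (1), let $c : B \to C$ be the coinserter of $f_0, f_1 : A \to B$ and let $C_0$ be the smallest sub-cpo containing $c[B]$. As $C_0$ is closed under $\omega$-joins, the corestriction $c'' : B \to C_0$ of $c$ is still continuous and satisfies $m \comp c'' = c$. First I would verify that $c''$ inherits the defining inequality of the coinserter: since $m$ reflects the order and $c \comp f_0 \sqleq c \comp f_1$, we get $c'' \comp f_0 \sqleq c'' \comp f_1$. By the factorization clause of the coinserter (Example~\ref{E:coins}(1)) there is then a morphism $u : C \to C_0$ with $u \comp c = c''$, whence $m \comp u \comp c = c = \id_C \comp c$. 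Finally I would invoke the order-cancellation clause (Example~\ref{E:coins}(2)): reading this equality as two inequalities $m \comp u \comp c \sqleq \id_C \comp c$ and $\id_C \comp c \sqleq m \comp u \comp c$ yields $m \comp u = \id_C$, so $m$ is a split epimorphism and therefore an isomorphism.

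Part (2) follows the same pattern, with the ordinary conical colimit property replacing the two coinserter clauses. With $c_i : A_i \to C$ the colimit cocone and $C_0$ the smallest sub-cpo containing $\bigcup_i c_i[A_i]$, each $c_i$ corestricts to a continuous $c_i' : A_i \to C_0$ with $m \comp c_i' = c_i$; because $m$ is monic, the family $(c_i')$ is again a cocone over the diagram. The induced mediating map $u : C \to C_0$ then satisfies $m \comp u \comp c_i = c_i$ for every $i$, so both $m \comp u$ and $\id_C$ mediate the cocone $(c_i)$ into $C$; uniqueness of the mediating morphism forces $m \comp u = \id_C$, making $m$ invertible exactly as before.

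The only genuinely delicate point is in (1): checking that the corestricted map still satisfies the coinserter inequality, and then applying the order-cancellation clause to upgrade the equality $m \comp u \comp c = c$ to $m \comp u = \id_C$ rather than a mere factorization. Everything else is the routine observation that a monic split epimorphism is an isomorphism, together with the standard fact (used in (2)) that the coprojections of any colimit are jointly $\omega$-dense, here specialized to directed diagrams. I anticipate no real obstacle in (2).
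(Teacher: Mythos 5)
Your argument is correct and follows essentially the same route as the paper's proof: corestrict the coinserter (resp.\ cocone) to the smallest sub-cpo $C_0$ containing the image, use that $m$ is an order-embedding to transfer the coinserter inequality, and then use the uniqueness part of the universal property to get $m \comp u = \id_C$, whence $m$ is invertible since it is monic. Your explicit two-inequality use of the order-cancellation clause is just an unpacking of the "universality" the paper invokes, and your part (2) correctly fills in the details the paper leaves as "completely analogous."
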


\begin{proof}
\phantom{phantom}

\begin{enumerate}

	\item Let $c : B \to C$ be a coinserter of $f_0,f_1 : A \to B$.
	Given a sub-cpo $m : C_0 \to C$ containing $c[B]$, we prove that $m$ is invertible.
	Let $e : B \to C_0$ be the codomain restriction of $c$.
	It is continuous because $f = m \comp e$ is.
	And it fulfils $e \comp f_0 \sqleq  e \comp f_1$ because $m \comp (e \comp f_0) \sqleq m \comp (e \comp f_1)$, and $m$ is an embedding.
	Thus $e$ factorizes through $c$; we have $e = h \comp c$:
	\[
	\begin{tikzcd}
		A \ar[r, bend left, "f_1"] \ar[r, bend right, swap, "f_0"] & B \ar[r, "c"] \ar[rd, swap, "e"] & C \ar[d, bend left, "h"] \\
		& & C_0 \ar[u, bend left, "m"]
		\end{tikzcd}
	\]
	Then $h = m^{-1}$.
	Indeed, from the above diagram we get
	\[
	(m \comp h) \comp c = m \comp e = c,
	\]
	thus the universality of $c$ yields $m \comp h = \id_C$.
	Since $m$ is monic, this yields $h = m^{-1}$.
	
	\item The proof for directed colimits is completely analogous.

\end{enumerate}
\end{proof}

\begin{lemma}
\label{L:dense}
If a cpo $C$ has an $\omega$-dense subset $X$, then $X^n$ is $\omega$-dense in $C^n$ for every $n \in \Nat$.
\end{lemma}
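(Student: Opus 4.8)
The plan is to reduce the claim to a statement about binary products and then induct on $n$. The base cases $n=0$ (where $C^0$ is a one-point cpo and $X^0$ is all of it) and $n=1$ (the hypothesis) are immediate, so the heart of the matter is the following lemma: if $X$ is $\omega$-dense in a cpo $C$ and $Y$ is $\omega$-dense in a cpo $D$, then $X \times Y$ is $\omega$-dense in $C \times D$. Granting this, I would take $Y = X$ and $D = C$ and conclude that $X^{n+1} = X^n \times X$ is $\omega$-dense in $C^{n+1} = C^n \times C$ from the inductive hypothesis that $X^n$ is $\omega$-dense in $C^n$.

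To prove the lemma I would let $S$ be the smallest sub-cpo of $C \times D$ containing $X \times Y$ and show $S = C \times D$ by filling in the two coordinates in turn. The key observation is that $\omega$-joins in $C \times D$ are computed coordinatewise, so that the \emph{slices} of $S$ are again sub-cpos. Precisely, fix $y \in Y$ and set $S_y = \{ c \in C \mid (c,y) \in S \}$. Given an $\omega$-chain $(c_k)$ in $S_y$, the chain $(c_k, y)$ lies in $S$ (its second coordinate is constant), hence so does its join $(\bigsqcup_k c_k,\, y)$; thus $\bigsqcup_k c_k \in S_y$, proving that $S_y$ is a sub-cpo of $C$. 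Since $X \times \{ y \} \subseteq X \times Y \subseteq S$, we have $X \subseteq S_y$, and the $\omega$-density of $X$ forces $S_y = C$. As $y \in Y$ was arbitrary, this yields $C \times Y \subseteq S$.

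For the second coordinate I would argue symmetrically: fix $c \in C$ and put $S^c = \{ d \in D \mid (c,d) \in S \}$; the same coordinatewise-join argument shows that $S^c$ is a sub-cpo of $D$, and the inclusion $\{ c \} \times Y \subseteq C \times Y \subseteq S$ gives $Y \subseteq S^c$. The $\omega$-density of $Y$ then forces $S^c = D$, so $\{ c \} \times D \subseteq S$ for every $c \in C$, i.e.\ $S = C \times D$, as desired.

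I expect no serious obstacle here, since the argument is essentially closure-and-induction bookkeeping. The only point requiring care is the verification that the slices $S_y$ and $S^c$ are closed under $\omega$-joins, which is exactly where the coordinatewise computation of joins in a product of cpos is used; everything else is a direct appeal to the definition of $\omega$-density as generation of a sub-cpo.
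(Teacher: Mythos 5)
Your proof is correct, but it takes a different route from the paper's. The paper argues globally: it builds the transfinite closure sequence $X_0 = X$, $X_{i+1} = $ (joins of $\omega$-chains in $X_i$), $X_i = \bigcup_{j<i} X_j$ at limits, notes that $\omega$-density means $X_\lambda = C$ for some $\lambda$, and then checks by transfinite induction that the corresponding sequence in $C^n$ starting from $X^n$ is exactly $(X_i^n)$, so it also reaches all of $C^n$. Your argument instead reduces to the binary case and fills in one coordinate at a time via the slices $S_y$ and $S^c$ of the generated sub-cpo $S$. Both proofs ultimately rest on the same fact --- $\omega$-joins in a product of cpos are computed coordinatewise --- but they package it differently: the paper needs the slightly fiddly identification $Y_i = X_i^n$ at every stage (in particular that an $n$-tuple of chain-joins is the join of a single tuple-chain), whereas your slice argument avoids transfinite bookkeeping entirely and only ever invokes the defining universal property of $\omega$-density (``the smallest sub-cpo containing $X$ is $C$''). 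Your version also isolates a slightly more general statement (density of $X \times Y$ in $C \times D$ for possibly different factors), which the paper's proof does not state though it would yield it just as easily. There is no gap: the verification that the slices are sub-cpos is exactly the point you flag, and your constant-coordinate chain argument handles it.
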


\begin{proof}
Define a transfinite sequence $X_i$ ($i \in \Ord$) of subsets of $C$ by $X_0 = X$, $X_i = \bigcup_{j < i} X_j$ for limit ordinals, and $X_{i+1}$ being the set of all joins of $\omega$-chains in $X_i$.
The $\omega$-density of $X$ means precisely that $X_\lambda$ is all of $C$ for some ordinal $\lambda$.

Let $Y_i$ be the corresponding sequence in $C^n$, starting with $Y_0 = X^n$.
It is easy to prove by transfinite induction that $Y_i = X_i^n$ for every $i$.
Thus $Y_\lambda$ is all of $C^n$, proving that $Y_0$ is $\omega$-dense. 
\end{proof}

\begin{lemma}
\label{L:cow}
$\CPO$ is cowellpowered with respect to epimorphisms with $\omega$-dense images.
\end{lemma}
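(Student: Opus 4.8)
The plan is to fix a cpo $A$ and bound, uniformly in $A$, the cardinality of every codomain $B$ of a morphism $e : A \to B$ with $\omega$-dense image; once such a bound exists, there is only a set of cpos of that size up to isomorphism, and cowellpoweredness follows. First I would note that the morphisms in question are exactly the members of the class $\EE$ of Lemma~\ref{L:factor}: if $e$ has $\omega$-dense image and $u \comp e = v \comp e$, then $u$ and $v$ agree on the $\omega$-dense subset $e[A]$, hence on all of $B$ by continuity, so $e$ is automatically epic.

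The key step is a cardinality estimate for the sub-cpo generated by a subset. Given $e$ with $e[A]$ $\omega$-dense in $B$, I form the transfinite sequence $X_0 = e[A]$, with $X_{i+1}$ the set of all joins of $\omega$-chains in $X_i$, and $X_i = \bigcup_{j<i} X_j$ at limit ordinals, exactly as in the proof of Lemma~\ref{L:dense}. I claim this sequence stabilizes already at the first uncountable ordinal $\omega_1$. Indeed, any $\omega$-chain in $X_{\omega_1} = \bigcup_{j < \omega_1} X_j$ is countable, and since $\omega_1$ has uncountable cofinality all of its members already lie in a common $X_j$ with $j < \omega_1$; hence its join lies in $X_{j+1} \subseteq X_{\omega_1}$. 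Thus $X_{\omega_1}$ is a sub-cpo containing the $\omega$-dense set $e[A]$, so $X_{\omega_1} = B$.

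It then remains to bound $|X_{\omega_1}|$. Put $\kappa = |A| + \aleph_0$ and $\mu = \kappa^{\aleph_0}$, and note that $\mu^{\aleph_0} = \mu$ and $\aleph_1 \le 2^{\aleph_0} \le \mu$. A transfinite induction gives $|X_i| \le \mu$ for all $i \le \omega_1$: we have $|X_0| = |e[A]| \le \kappa \le \mu$; at successor steps $|X_{i+1}| \le |X_i|^{\aleph_0} \le \mu^{\aleph_0} = \mu$, since there are at most $|X_i|^{\aleph_0}$ $\omega$-chains and each contributes a single join; and at limit steps $i \le \omega_1$ we get $|X_i| \le \aleph_1 \cdot \mu = \mu$. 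Consequently $|B| = |X_{\omega_1}| \le \mu$, a bound depending only on $A$.

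Finally, every such $B$ is isomorphic to a cpo whose underlying set is a subset of a fixed set of cardinality $\mu$, its order being a subset of $\mu \times \mu$ and the comparison map $e$ a function out of $|A|$; there is only a set of such data. Hence there is only a set of isomorphism classes of $\omega$-dense-image epimorphisms out of $A$, which is the assertion. The only genuinely delicate point is the closure-ordinal argument at $\omega_1$ together with the cardinal arithmetic identity $\mu^{\aleph_0} = \mu$; the remaining steps are routine bookkeeping.
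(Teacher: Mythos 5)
Your argument is correct, and it reaches the same goal as the paper --- a cardinality bound on the codomain $B$ depending only on $A$, followed by the observation that there is only a set of cpos of bounded size --- but the mechanism for obtaining the bound is genuinely different. The paper avoids any transfinite iteration by a one-step trick: it closes $e[A]$ under \emph{all existing joins of arbitrary subsets} of $B$; this set is automatically closed under existing joins again (by associativity of joins), in particular under $\omega$-joins, so it is already a sub-cpo containing $e[A]$ and hence all of $B$, and it visibly has at most $2^{\lambda}$ elements for $\lambda = \card e[A]$. You instead perform the honest generation of the sub-cpo by iterating $\omega$-chain joins, and the two points you flag as delicate --- stabilization at $\omega_1$ via the uncountable cofinality of $\omega_1$, and the identity $\mu^{\aleph_0} = \mu$ for $\mu = (\card A + \aleph_0)^{\aleph_0}$ --- are exactly right and carry the proof. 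What each approach buys: the paper's is shorter and needs no cardinal arithmetic beyond counting subsets, while yours yields the sharper bound $\card B \le (\card A + \aleph_0)^{\aleph_0}$ (versus $2^{\card A}$) and reuses the generation sequence already set up for Lemma~\ref{L:dense}. Your preliminary remark that every morphism with $\omega$-dense image is epic (its ``equalizer set'' is a sub-cpo containing $e[A]$) is a correct and worthwhile addition, since the paper uses this fact implicitly when invoking the class $\EE$ of Lemma~\ref{L:factor} in Corollary~\ref{P:free}.
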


\begin{proof}
If $e : A \to B$ has an $\omega$-dense image, let $B_0$ be the smallest subset of $B$ containing $e[A]$ and closed under all existing joins of $B$.
Thus $B_0 = B$ because $e[A] \subseteq B_0$ and $B_0$ is closed under joins of $\omega$-chains.
On the other hand, if $e[A]$ has cardinality $\lambda$, then $B_0$ has cardinality at most $2^\lambda$: take each of the $2^\lambda$ subsets of $e[A]$ and in case it has a join, put that join into $B_0$.
This makes $B_0$ closed under all existing joins of $B$.

If $A$ has cardinality $\kappa$, then $e[A]$ has cardinality $\lambda \leq \kappa$, and $B = B_0$ has cardinality $2^\lambda \leq 2^\kappa$.
There is only a set of cpos of cardinality at most $2^\kappa$ (up to isomorphism).
Thus, $A$ has only a set of quotients represented by epimorphisms with $\omega$-dense images.
\end{proof}

\begin{remark}
\label{R:ADCPO}
All the above has a complete analogy in $\DCPO$:
\begin{enumerate}
	
	\item $\DCPO$ has a factorization system (dense, sub-dcpo).
	
	\item Every coinserter in $\DCPO$ has a dense image, and every directed colimit has a dense union of images.
	
	\item If $X$ is dense in a dcpo $C$, then $X^n$ is dense in $C^n$ for all $n \in \Nat$.
	
	\item $\DCPO$ is cowellpowered with respect to epimorphisms with dense images.
	
\end{enumerate}
All the proofs are completely analogous to the preceding ones.
\end{remark}

\end{document}